\documentclass[11pt,a4paper]{article}
\usepackage{amssymb,amsmath,amsopn,amsthm,graphicx,makeidx, url}
\input xy
\xyoption{all}
\usepackage[all,2cell]{xy}
\UseAllTwocells

\addtolength{\textheight}{4.1cm}
\addtolength{\voffset}{-2.1cm}
\addtolength{\textwidth}{2cm}
\addtolength{\hoffset}{-1cm}

\begin{document}

\newtheorem{definition}{Definition}[section]
\newtheorem{definitions}[definition]{Definitions}
\newtheorem{lemma}[definition]{Lemma}
\newtheorem{prop}[definition]{Proposition}
\newtheorem{theorem}[definition]{Theorem}
\newtheorem{cor}[definition]{Corollary}
\newtheorem{cors}[definition]{Corollaries}
\theoremstyle{remark}
\newtheorem{remark}[definition]{Remark}
\theoremstyle{remark}
\newtheorem{remarks}[definition]{Remarks}
\theoremstyle{remark}
\newtheorem{notation}[definition]{Notation}
\theoremstyle{remark}
\newtheorem{example}[definition]{Example}
\theoremstyle{remark}
\newtheorem{examples}[definition]{Examples}
\theoremstyle{remark}
\newtheorem{dgram}[definition]{Diagram}
\theoremstyle{remark}
\newtheorem{fact}[definition]{Fact}
\theoremstyle{remark}
\newtheorem{illust}[definition]{Illustration}
\theoremstyle{remark}
\newtheorem{rmk}[definition]{Remark}
\theoremstyle{definition}
\newtheorem{observation}[definition]{Observation}
\theoremstyle{definition}
\newtheorem{question}[definition]{Question}
\theoremstyle{definition}
\newtheorem{conj}[definition]{Conjecture}

\newcommand{\stac}[2]{\genfrac{}{}{0pt}{}{#1}{#2}}
\newcommand{\stacc}[3]{\stac{\stac{\stac{}{#1}}{#2}}{\stac{}{#3}}}
\newcommand{\staccc}[4]{\stac{\stac{#1}{#2}}{\stac{#3}{#4}}}
\newcommand{\stacccc}[5]{\stac{\stacc{#1}{#2}{#3}}{\stac{#4}{#5}}}

\renewcommand{\marginpar}[2][]{}

\renewenvironment{proof}{\noindent {\bf{Proof.}}}{\hspace*{3mm}{$\Box$}{\vspace{9pt}}}

\title{Strictly atomic modules in definable categories}

\author{Mike Prest \\ Department of Mathematics, University of Manchester, UK \\ mprest@manchester.ac.uk}

\date{\today} 

\maketitle

\footnotetext{MSC:  03C60, 16D90, 16G20, 18E08, 18E10, 18E45}

\abstract{If ${\cal D}$ is a definable category then it may contain no nonzero finitely presented objects but, by a result of Makkai, there is a $\varinjlim$-generating set of strictly ${\cal D}$-atomic modules in ${\cal D}$.  These modules share some key properties with finitely presented modules.

We consider these modules in general and then in the case that ${\cal D}$ is the category of modules of some fixed irrational slope over a tubular algebra.}

\tableofcontents

\section{Introduction}

Mittag-Leffler and strictly Mittag-Leffler modules were introduced in \cite{RayGru}.  These modules are in some sense `small':  they include the finitely presented modules and pure-projective modules (direct summands of direct sums of finitely presented modules).  For countably generated modules, the conditions of being Mittag-Leffler, strictly Mittag-Leffler and pure-projective are equivalent.  

Definable categories include, but are much more general than, module categories.  They are not in general locally finitely presented; indeed they may contain no finitely presented objects other than $0$ (\cite[18.1.1]{PreNBK}). They do, however, have enough relative (to the definable category) Mittag-Leffler, even strictly Mittag-Leffler, objects; that is a result of Makkai \cite{Makk}, \cite{MakkTop}.  Here we deduce various consequences.  We also give a proof of existence, in the case of modules over countable rings, which is more direct than in Makkai's paper.  We also favour a different terminology for the relative concepts, using the term (strictly) ${\cal D}$-atomic for the relativisation of (strictly) Mittag-Leffler to a definable category ${\cal D}$.  This terminology reflects the characterisation of Mittag-Leffler modules which is that every finite tuple of elements in such a module has finitely generated pp-type.

Early papers dealing with these modules include \cite{Azu}, where the strict Mittag-Leffler condition was shown to be equivalent to being locally pure-projective, and \cite{RotHab}, where the model-theoretic characterisation of Mittag-Leffler modules in terms of pp-types was discovered and the basic results extended to definable categories.  Makkai's work \cite{MakkTop}, \cite{Makk} was done in a very general context using category-theory-inspired model theory and here we connect it with the more algebraic line of development.

After introducing the concepts and basic results, we recall Makkai's result - existence of enough strictly ${\cal D}$-atomic objects in every definable category ${\cal D}$ - and we give a fairly short direct proof in the case that ${\cal D}$ is defined over a countable ring.  Makkai's result implies, for example, that, if ${\cal D}$ is a definable subcategory of a module category ${\rm Mod}\mbox{-}R$, then every finitely presented $R$-module has a strictly ${\cal D}$-atomic ${\cal D}$-preenvelope.  Then we look at some immediate consequences, including the case that ${\cal D} = {\rm Gen}(T)$ for a silting $R$-module $T$.

All this is applied in the category ${\cal D}_r$ of $R$-modules of some irrational slope $r$ when $R$ is a tubular algebra.  These are tame, generally non-domestic, algebras; their categories of finite-dimensional modules are described in \cite[Chpt. 5]{RinTame} and a feature is that every finite-dimensional indecomposable has a slope (a rational number or $\infty$).  Moreover, if there is a non-zero morphism from a module of slope $r$ to one of slope $s$, then $r\leq s$.  It was shown by Reiten and Ringel \cite[13.1]{ReiRin} that, remarkably, {\em every} indecomposable module has a slope, which is a real number or $\infty$.  If $r$ is irrational, then the category ${\cal D}_r$ of modules of slope $r$ contains no finite-dimensional nonzero module.  A good deal of information has been obtained about these categories in \cite{HarPre}, \cite{GreTub}, \cite{AngKusAlg} and \cite{KusLak} but there is currently no description of the indecomposable pure-injectives in ${\cal D}_r$.  Here, with that aim in mind, we shed a little more light on the structure of ${\cal D}_r$.

\vspace{4pt}

In somewhat more detail, Section \ref{secML} brings together background definitions and results.  Strictly ${\cal D}$-atomic modules, which coincide, \ref{streqstr}, with the strict ${\cal D}$-stationary modules from \cite[8.2]{AngHerML} in ${\cal D}$, are given an internal-to-${\cal D}$-characterisation in \ref{charsML} and the indecomposable direct summands of their character-duals are shown to be all neg-isolated, \ref{dualnegisol}, cf.~\cite{AngHerzLak}.  Then Makkai's result for countable rings is given a direct proof, \ref{Mak}.  In Section \ref{secsatgen}, we look at strictly ${\cal D}$-atomic modules as `pure generators' for ${\cal D}$.  In \ref{rdsbiend} we show that, if $M$ is a strictly ${\cal D}$-atomic module which is finitely generated over its endomorphism ring, then the ring of definable scalars of $M$ is its biendomorphism ring.  We note in Section \ref{sectilt} that a silting module is strictly atomic for the silting class that it generates.

In Section \ref{secirrat} we specialise to definable categories of the form ${\cal D}_r$.  We see that if $T\in {\cal D}_r$ is a tilting module, then the strictly atomic modules in ${\cal D}_r$ are the direct summands of direct sums of copies of $T$, \ref{strDrat}.  Every exact sequence in ${\cal D}_r$ is pure-exact, \ref{espes}, and we want to understand the non-pure morphisms in ${\cal D}_r$, in particular those with kernel not in ${\cal D}_r$ (see the proof of \cite[6.5]{AngKusAlg} for such morphisms).  In \ref{kerinDr} we characterise the submodules $K$ of $D\in {\cal D}_r$ such that $D/K \in {\cal D}_r$ as those which are definably closed in $D$ and we develop some consequences.

With the aim of making the paper reasonably self-contained, Section \ref{secmodth} gives a quick account of the ideas, such as pp-types and definable closure, from model theory that we use in the paper.

\vspace{4pt}

I would like to thank Amit Kuber, for reminding me about Makkai's paper \cite{Makk} and the strength of his result and for subsequent discussions, and Philipp Rothmaler for a conversation where we worked out the more direct proof, in the countable case, of Makkai's result and, of course, for originally noticing and developing in \cite{RotHab} the model-theoretic content and import of the Mittag-Leffler condition.  Thanks to Rosanna Laking for pointing out an example showing that some restriction on the duality is needed for \ref{dualnegisol}.  Thanks also to Lidia Angeleri H\"{u}gel for clarifying the connection with strict stationary modules and for strengthening the conclusion of \ref{strDrat} from pure to split embeddings.  I also thank the referees for a number of useful comments and suggestions.

\section{Background}

Throughout we use the language of rings and modules but, in fact, for the general results we may take $R$ to be any (skeletally) small preadditive category - a ring with many objects - and so left and right $R$-modules will be additive functors (respectively covariant and contravariant) from $R$ to the category ${\bf Ab}$ of abelian groups.  For this paper, we don't need that generality so, throughout, we write as if $R$ is a normal, 1-sorted, ring but the proofs do work in the more general context.

We also make full use of concepts and results from the model theory of modules as well as algebraic methods.  Indeed, from the start, we freely use the notions of pp formula and pp-type.  There is a section at the end of the paper which, we hope, explains what is needed here.  There are many sources for more detail about the model theory used:  here I tend to cite \cite{PreNBK} as a fairly comprehensive secondary source but there are numerous (much) more concise introductions and summaries.  In particular there is \cite{RotML2} which also includes a great deal of the algebraic background material from Sections \ref{secML} and \ref{secSML}, for which also see \cite{AngHerML}.  The ``additive model theory" that we use is really a (highly-developed) part of regular (=pp-) model theory; see \cite{Butz} for an introduction to regular model theory which is based on categorical model theory rather than classical model theory.

\section{Mittag-Leffler and relatively atomic modules}\label{secML} \marginpar{secML}

Let $R$ be a ring.  Throughout ${\rm Mod}\mbox{-}R$ and $R\mbox{-}{\rm Mod}$ denote, respectively, the categories of right and of left $R$-modules; ${\rm mod}\mbox{-}R$ is the category of finitely presented right modules.

An $R$-module $M$ is {\bf Mittag-Leffler}, or just {\bf ML} (\cite[\S 2]{RayGru}), if $M$ is the direct limit of a directed system $(\{M_i\}_i, \{f_{ij}:M_i \to M_j\}_{i\leq j})$ of finitely presented modules $M_i$, where the directed system satisfies the following equivalent conditions, with $f_{i\infty}:M_i \to M$ denoting the limit maps:

\noindent (i) for every $i$ there is $j\geq i$ such that, for any tuple $\overline{a}$ from $M_i$, ${\rm pp}^M(f_{i \infty}\overline{a}) = {\rm pp}^{M_j}(f_{ij}\overline{a})$ (it is enough to require this for a generating tuple for $M_i$);

\vspace{4pt}

\noindent (ii) for every $i$ there is $j\geq i$ such that $f_{ij}$ factors through each $f_{ik}$ for all $k \geq j$.

\vspace{4pt}

\noindent For the pp-types, ${\rm pp}^M(-)$ referred to in (i), see Section \ref{secmodth}.

\begin{theorem} \label{MLchar} \marginpar{MLchar}  Suppose that $M$ is a right $R$-module.  Then the following conditions are equivalent.

\noindent (i) $M$ is Mittag-Leffler.

\noindent (ii) For every set $\{ L_i\}_{i\in I}$ of left $R$-modules, the canonical map $M\otimes_R \, (\prod_{i\in I} \, L_i) \to \prod_{i\in I} \, (M\otimes_R L_i)$ is monic.

\noindent (iii)  Every pp-type realised in $M$ is finitely generated.  That is, for any $\overline{a} = (a_1,\dots, a_n)$ with the $a_i\in M$, ${\rm pp}^M(\overline{a}) = \langle \phi\rangle$ for some pp formula $\phi$, where $\langle \phi\rangle =\{ \psi \mbox{ pp : } \phi \leq \psi\}$ denotes the pp-type generated by $\phi$.
\end{theorem}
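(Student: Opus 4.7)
The plan is to handle the equivalences (1)$\Leftrightarrow$(3) and (3)$\Leftrightarrow$(2) separately, each exploiting a standard pp-theoretic tool: the ascending behaviour of pp-types in direct limits, and Herzog's pp-criterion for vanishing in tensor products.

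For (1)$\Leftrightarrow$(3), the key background fact is that pp formulas are preserved by homomorphisms and commute with direct limits, so in $M = \varinjlim M_i$ one has ${\rm pp}^M(f_{i\infty}\overline{a}) = \bigcup_{j \geq i} {\rm pp}^{M_j}(f_{ij}\overline{a})$, an ascending union. For (1)$\Rightarrow$(3), given $\overline{a}$ in $M$ lift it to some $M_i$ (enlarging the generating tuple there if necessary), apply condition (i) to obtain $j \geq i$ with ${\rm pp}^M(f_{i\infty}\overline{a}) = {\rm pp}^{M_j}(f_{ij}\overline{a})$, and use that the pp-type of any tuple in a finitely presented module is generated by a single pp formula coming from a finite presentation. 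For (3)$\Rightarrow$(1), pick any presentation $M = \varinjlim M_i$ with each $M_i$ finitely presented; for a generating tuple $\overline{a}_i$ of $M_i$, finite generation of ${\rm pp}^M(f_{i\infty}\overline{a}_i)$ by a pp formula $\phi_i$ forces $\phi_i$ to hold at $f_{ij}\overline{a}_i$ in some $M_j$ by the ascending-union description, so the pp-types stabilise at $j$ for the generating tuple, which suffices by the parenthetical in (i).

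For (3)$\Leftrightarrow$(2), the central tool is Herzog's pp-criterion for vanishing in $M \otimes_R L$: $\overline{m} \cdot \overline{l} = 0$ if and only if there is a pp formula $\phi(\overline{x})$ of right modules with $M \models \phi(\overline{m})$ whose order-reversing dual $D\phi$ satisfies $L \models D\phi(\overline{l})$. Using this, (3)$\Rightarrow$(2) is direct: if $\xi = \overline{m} \cdot (\overline{l}_i)_i \in M \otimes \prod_i L_i$ vanishes in each factor $M \otimes L_i$, then each vanishing is witnessed by some $\phi_i \in {\rm pp}^M(\overline{m})$; but a finite generator $\phi$ of this pp-type satisfies $\phi \leq \phi_i$ for every $i$, hence $D\phi_i \leq D\phi$, so $D\phi$ is witnessed by each $\overline{l}_i$; since pp formulas are preserved by products, $\prod_i L_i \models D\phi((\overline{l}_i)_i)$, and Herzog's criterion assembles $\xi = 0$ in $M \otimes \prod_i L_i$. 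For (2)$\Rightarrow$(3) I would argue contrapositively: if $\overline{m}$ has non-finitely-generated pp-type $p$, index a family $\{L_\phi\}_{\phi \in p}$ of left modules with tuples $\overline{l}_\phi$ freely realising $D\phi$; then Herzog gives $\overline{m} \cdot \overline{l}_\phi = 0$ in each $M \otimes L_\phi$, but if the combined element in $M \otimes \prod_\phi L_\phi$ also vanished, Herzog would yield a single $\psi \in p$ with $\psi \leq \phi$ for every $\phi \in p$ -- a generator of $p$, contradicting the assumption.

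The main obstacle is the (2)$\Rightarrow$(3) direction: the construction of the failing family $\{L_\phi\}$ requires Gruson--Jensen / Herzog pp-duality between right and left modules together with free realisations of pp formulas, tools that encapsulate a fair amount of additive model theory and must be invoked carefully.
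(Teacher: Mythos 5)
Your argument is correct. For the equivalence (2)$\Leftrightarrow$(3) you have reconstructed essentially the same proof the paper gives for the relative version \ref{relMLchar}: Herzog's criterion \ref{Herz} to translate vanishing of tensors into pp-formula statements, elementary duality to reverse the implication order, preservation of pp formulas under products, and free realisations to manufacture the test family of left modules. Specialising \ref{relMLchar} to ${\cal D} = {\rm Mod}\mbox{-}R$ gives exactly (2)$\Leftrightarrow$(3), and in that absolute case the paper's extra step of passing to a ${\cal D}^{\rm d}$-preenvelope of the free realisation is vacuous (since ${\cal D}^{\rm d} = R\mbox{-}{\rm Mod}$), which your version correctly omits. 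The only cosmetic difference is that you cast the (2)$\Rightarrow$(3) direction contrapositively whereas the paper argues directly; the content is identical, and the ``obstacle'' you flag at the end is precisely what the duality/free-realisation machinery you invoke is designed to handle, so it is not a gap.

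The paper does not actually spell out a proof of (1)$\Leftrightarrow$(3); it takes this as background (citing \cite{RotHab} and the standard literature) and only reproves (2)$\Leftrightarrow$(3) via the relative version. Your argument for (1)$\Leftrightarrow$(3) is the standard one and is sound: pp-types ascend and are computed as directed unions along direct limits (\cite[1.2.31]{PreNBK}), pp-types of tuples in finitely presented modules are finitely generated, a generator of ${\rm pp}^M(f_{i\infty}\overline{a}_i)$ must already appear at some finite stage $M_j$, and the parenthetical remark in condition (i) lets you check stabilisation on a generating tuple only. So you have supplied a proof of the half that the paper leaves implicit, and matched the paper's approach on the half it does prove.
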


The equivalence of (i) and (iii) is a special case of \cite[2.2]{RotHab}.  It is the property (iii) which we will use in what follows as the definition of the corresponding modules in the more general context of definable categories.  The above result, done for general definable categories ${\cal D}$ in place of ${\rm Mod}\mbox{-}R$, is in \cite{RotHab} (e.g.~\cite[2.2]{RotHab}); also see subsequent papers, e.g.~\cite{RotML}, ~\cite{PunRoth},
\cite{AngHerML}, \cite{GAIRT}.  In the terminology of those references we would refer to our modules of interest as ${\cal D}^{\rm d}$-Mittag-Leffler, where ${\cal D}^{\rm d}$ is the dual definable category (see Section \ref{secmodth}) of ${\cal D}$.  This would be slightly clumsy and also not extendable to non-additive definable categories (\cite{KubRo}, \cite{LackTend}) where there seems not to be the nice, multiple-level, theory of duality that one has in the additive context.  So we will follow \cite{RotSML} (and the earlier references \cite{RotHab}, \cite{KucRot} and \cite{RotML2}) and take property (iii) as the basis of our terminology.

We recall some properties of Mittag-Leffler modules and their relation to {\bf pure-projective} modules - modules which are direct summands of (infinite) direct sums of finitely presented modules.

\begin{theorem}

\noindent (1) Every pure-projective $R$-module, in particular every finitely presented $R$-module, is ML.

\noindent (2) Every direct sum of ML modules is ML, as is every pure submodule, in particular every direct summand, of an ML module.

\noindent (3) Every countably generated ML module is pure-projective.

\noindent (4) A module $M$ is ML iff every finite subset of $M$ is contained in a pure-projective pure submodule of $M$, and this implies that every countable subset of $M$ is contained in a pure-projective pure submodule of $M$.
\end{theorem}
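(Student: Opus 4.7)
The plan is to handle (1) together with (2) first, then the hard part (3), and obtain (4) as a combination. Throughout I use characterisation (3) of Theorem \ref{MLchar} -- every realised pp-type is finitely generated -- as the working definition of ML.

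For (1) and (2): a finitely presented $M = R^n/K$ with generating tuple $\overline{a}$ has $\mathrm{pp}^M(\overline{a})$ generated by the single pp formula encoding the relations in $K$; every other tuple is an $R$-linear image of $\overline{a}$ and inherits a finitely generated pp-type, so $M$ is ML. Purity of $N \subseteq M$ gives $\mathrm{pp}^N(\overline{a}) = \mathrm{pp}^M(\overline{a})$ for tuples in $N$, so pure submodules of ML modules are ML. For direct sums, any tuple from $\bigoplus_i M_i$ lies in a finite pure sub-sum, and the pp-type of $(\overline{a}, \overline{b}) \in M_1 \oplus M_2$ is generated by $\phi \wedge \psi$ in disjoint variables when $\phi,\psi$ generate the pp-types of $\overline{a}, \overline{b}$; so arbitrary direct sums of ML modules are ML. Since pure-projective modules are direct summands of direct sums of finitely presented modules, (1) now follows from the finitely presented case via (2).

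Part (3) is the main obstacle. For countably generated $M$ I pick a countable cofinal chain $M_0 \to M_1 \to \cdots$ of finitely presented modules with $\varinjlim M_i = M$, and invoke the equivalent condition (ii) of the ML definition: after thinning to a further cofinal subchain one may assume each transition map $f_{i,i+1}$ factors through $f_{i,i+2}$ via some $h_i \colon M_{i+2} \to M_{i+1}$. These factorisations allow a telescoping induction that constructs a section of the canonical epimorphism $\bigoplus_i M_i \twoheadrightarrow M$, exhibiting $M$ as a direct summand of $\bigoplus_i M_i$ and hence as pure-projective. The delicate point is arranging that the partial splittings defined at successive finite stages cohere in the colimit.

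For (4), the ``if'' direction is formal: if $\overline{a} \in M$ lies in a pure-projective pure submodule $N$, then $\mathrm{pp}^M(\overline{a}) = \mathrm{pp}^N(\overline{a})$, which is finitely generated by (1). For the converse, starting from a finite or countable $F \subseteq M$, I would build a countable pure submodule $N \supseteq F$ by a L\"owenheim--Skolem-style closure: iterate a step that, for every tuple $\overline{a}$ already in $N$, picks a generator $\phi_{\overline{a}} = \exists \overline{y}\,\theta(\overline{x}, \overline{y})$ of $\mathrm{pp}^M(\overline{a})$ and adjoins a witness $\overline{b}$ with $M \models \theta(\overline{a}, \overline{b})$. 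Each step adjoins only countably many new elements and the process runs for $\omega$ stages, so $N$ remains countable; purity at each tuple $\overline{a}$ holds because every pp formula in $\mathrm{pp}^M(\overline{a})$ is implied by its chosen generator. The resulting $N$ is ML by (2) and countably generated, hence pure-projective by (3), delivering both clauses.
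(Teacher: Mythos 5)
Your parts (1), (2) and (4) are essentially correct, with two small expository points. In (2), a general tuple from $M_1 \oplus M_2$ has entries of the form $a_i + b_i$, not a concatenation $(\overline{a}, \overline{b})$; you should close the loop by observing that $\overline{a} + \overline{b}$ is an $R$-linear image of the longer tuple $(\overline{a},\overline{b})$ (whose pp-type you have shown is generated by $\phi\wedge\psi$) and that finite generation of pp-types passes to $R$-linear images. Alternatively, use directly the sum formula $\phi_1 + \phi_2$ from \cite[1.2.27]{PreNBK}, which is exactly what Lemma \ref{Datds} does for the relative version. Part (4) is sound once (3) is in place.

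The genuine gap is in (3), and you flag it yourself. Two issues. First, you fix a countable cofinal chain and then invoke condition (ii) ``of the ML definition''; but (ii) is, as stated, a property of \emph{some} presentation whose existence witnesses ML, and it is a further (standard but non-trivial) fact that (ii) then holds for \emph{every} presentation of $M$ as a directed colimit of finitely presented modules, in particular for your chosen chain. Second, and more seriously, the ``telescoping induction'' is not carried out, and it is precisely the hard part of the classical Raynaud--Gruson argument. If you write the colimit as the cokernel $0 \to \bigoplus_n M_n \xrightarrow{1 - \mathrm{shift}} \bigoplus_n M_n \to M \to 0$ and try to split it, the na\"ive recursion $\psi_n = \iota_n + \psi_{n+1} f_n$ produces a formally infinite sum which the factoring maps alone do not obviously truncate. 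The construction that actually works uses the stronger consequence of ML that each transition map $f_n$ lifts through the colimit map $f_{n\infty} : M_n \to M$ -- that is, there are $w_n : M \to M_{n+1}$ with $w_n f_{n\infty} = f_n$ -- and then sets $s_n := w_n - \iota_{n+1}\circ(\text{composite})\,w_{n-1}$ so that only finitely many components are nonzero on each element and the resulting $s : M \to \bigoplus_n M_{n+1}$ telescopes to a section. Your sketch neither produces the $w_n$ (the factoring maps $h_i : M_{i+2}\to M_{i+1}$ do not obviously cohere into such a $w_n$) nor shows the coherence you acknowledge is delicate, so as written (3) is incomplete.

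The paper does not prove this theorem but does prove the corresponding relative statement -- part (2) of the Proposition following Lemma \ref{Dpureepi} -- and takes an entirely different route which avoids these difficulties. Given a pure epimorphism $\pi : D \to M$ and an enumeration $a_1, a_2, \ldots$ of generators, one inductively lifts each initial segment $(a_1,\ldots,a_n)$ across $\pi$ to a tuple $(c_1,\ldots,c_n)$ with the \emph{same} finitely generated pp-type; the only non-obvious step is that an arbitrary lift $c'_n$ can be corrected to $c_n$ by an element of $\ker\pi$, using purity of $\ker\pi$ in $D$. The map $a_i \mapsto c_i$ then splits $\pi$. This pp-type back-and-forth is shorter, self-contained given Theorem \ref{MLchar}(3), and relativises without change to definable subcategories, which is why the paper prefers it. If you want to keep the colimit approach you must construct the section explicitly; otherwise I would recommend switching to the pp-type argument.
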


An embedding $j:A \to B$ of modules is {\bf pure} if, for every left module $L$, $j\otimes_RL: A\otimes_RL \to B\otimes_RL$ is monic, equivalently if, for every finite tuple, $\overline{a}$ of elements from $A$ and every pp formula $\phi$, if $j\overline{a} \in \phi(B)$, then $\overline{a} \in \phi(A)$ (again, see Section \ref{secmodth} for any unfamiliar notation or terminology).

\vspace{4pt}

\noindent {\bf Convention:}  Throughout the paper ${\cal D}$ will denote a definable additive category which we will take to be definably embedded in (i.e.~equivalent to a definable subcategory of) ${\rm Mod}\mbox{-}R$ for some small preadditive category $R$.  All references to pp formulas and types may be taken to refer to the language for $R$-modules.

If ${\cal D}$ is definably embedded in ${\rm Mod}\mbox{-}R$ (indeed, definably embedded into any definable category ${\cal E}$) then (\cite[5.3]{PreMAMS}) purity in the larger category, restricted to ${\cal D}$ coincides with the internally-defined purity in ${\cal D}$.  The latter is defined as follows:  a morphism in ${\cal D}$ is a {\bf pure embedding} iff some ultrapower of it is a split embedding.  This works because, given any definable category ${\cal D}$, there is some index set $I$ and ultrafilter ${\cal U}$ on that index set such that, if $D\in {\cal D}$, then the ultrapower $D^\ast = D^I/{\cal U}$ is pure-injective \cite[21.2]{PreMAMS}.  That is by a general model-theoretic result; see \cite[\S\S 20,21]{PreMAMS} for more detail.

It follows that the choice of category of modules into which ${\cal D}$ is definably embedded makes no difference to the model theory on ${\cal D}$ (only the language might change to one that is, when restricted to ${\cal D}$, equivalent).

\begin{definition} Given a definable category ${\cal D}$ and pp formulas $\phi, \psi$ (with the same free variables), we write $\phi \leq_{\cal D} \psi$ if, for every $D\in {\cal D}$, we have $\phi(D) \leq \psi(D)$.  If $\phi$ is a pp formula, then we set $\langle \phi \rangle_{\cal D} = \{ \psi \mbox{ pp} \, : \phi \leq_{\cal D} \psi\}$ to be the {\bf pp-type generated by} $\phi$ {\bf in} or {\bf modulo} (the theory of) ${\cal D}$  and we will also say that this pp-type is ${\cal D}$-{\bf generated by} $\phi$.  A ${\cal D}${\bf -finitely generated} pp-type is one which is ${\cal D}$-generated by some pp formula.

More generally, if $q$ is any set of pp formulas which, for our purposes, we will assume to be closed under finite conjunction ($\wedge$ = ``and"), then we define the ${\cal D}$-{\bf closure} of $q$ to be $\langle q \rangle_{\cal D} = \{ \psi \mbox{ pp} \, : \psi \geq_{\cal D} \phi \mbox{ for some } \phi \in q\}$.

In these definitions, we drop the subscript ${\cal D}$ when ${\cal D} = {\rm Mod}\mbox{-}R$.
\end{definition}

That is, a pp-type $p$ is ${\cal D}$-generated by $\phi$ if $\phi \in p$ and if, in every $D\in {\cal D}$, every tuple which satisfies $\phi$ also satisfies every formula in $p$.  Thus $p$ is the smallest pp-type which contains $\phi$ and which is the pp-type of some tuple $\overline{a}$ of elements from some $D \in {\cal D}$.  Similarly, if a tuple $\overline{a}$ of elements from some $D\in {\cal D}$ satisfies all the formulas in $q$, then it will satisfy all those in $\langle q \rangle_{\cal D}$.

By \cite[3.5]{RadSaor} every definable subcategory ${\cal D}$ of a module category ${\rm Mod}\mbox{-}R$ is {\bf preenveloping} (as well as covering, e.g.~\cite[2.4]{CrivPreTorr}), that is, given any $M\in {\rm Mod}\mbox{-}R$ there is a morphism $f:M\to D_M\in {\cal D}$ - a ${\cal D}$-{\bf preenvelope} of $M$ - such that, for every morphism $g:M \to D\in {\cal D}$, there is $h:D_M \to D$ with $hf=g$.

$\xymatrix{M \ar[r]^f \ar[dr]_g & D_M \ar@{.>}[d]^h \\ & D}$

\noindent However, $h$ may well not be unique, and the choice of $D_M$ is not unique in any sense though, see \cite[3.3]{LackTend}, choosing such a weak reflection into ${\cal D}$ can be made functorial.

Recall also that a ${\cal D}$-{\bf envelope} of a module $M$ is a ${\cal D}$-preenvelope $f:M \to D_M$ with the property that any endomorphism $h:D_M \to D_M$ such that $hf=f$ is an automorphism.

\begin{lemma}\label{Denvpp} \marginpar{Denvpp} Let $A\in {\rm mod}\mbox{-}R$ and let $\overline{a}$ be a tuple from $A$ with ${\rm pp}^A(\overline{a}) = \langle \phi \rangle$.  If $f: A \to D \in {\cal D}$ is a ${\cal D}$-preenvelope of $A$, then ${\rm pp}^D(f\overline{a}) = \langle \phi \rangle_{\cal D}$.
\end{lemma}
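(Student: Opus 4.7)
The plan is to prove the two inclusions separately: the inclusion $\langle\phi\rangle_{\mathcal{D}} \subseteq {\rm pp}^D(f\overline{a})$ is essentially formal, while the reverse inclusion uses the interplay between the free-realisation property of $(A,\overline{a})$ and the preenveloping property of $f$.

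For the easy direction, note that since $\phi \in {\rm pp}^A(\overline{a})$, the homomorphism $f$ (which preserves pp formulas) gives $f\overline{a} \in \phi(D)$. Now if $\psi$ is any pp formula with $\phi \leq_{\mathcal{D}} \psi$, then in particular $\phi(D) \subseteq \psi(D)$, so $f\overline{a} \in \psi(D)$, i.e.\ $\psi \in {\rm pp}^D(f\overline{a})$.

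For the reverse inclusion, the key observation is that the hypotheses ($A$ finitely presented, $\overline{a} \in \phi(A)$, and ${\rm pp}^A(\overline{a}) = \langle\phi\rangle$) say precisely that $(A,\overline{a})$ is a \emph{free realisation} of $\phi$: for any module $M$ and any tuple $\overline{b} \in \phi(M)$, there is a homomorphism $g : A \to M$ with $g\overline{a} = \overline{b}$. (This is the standard correspondence between pp formulas and finitely presented modules with distinguished tuple.) Given this, suppose $\psi \in {\rm pp}^D(f\overline{a})$; I want to show $\phi \leq_{\mathcal{D}} \psi$. Pick any $D' \in \mathcal{D}$ and any $\overline{b} \in \phi(D')$, and produce $g : A \to D'$ with $g\overline{a} = \overline{b}$ via the free-realisation property. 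Since $f$ is a $\mathcal{D}$-preenvelope and $D' \in \mathcal{D}$, there exists $h : D \to D'$ with $hf = g$. Then $\overline{b} = g\overline{a} = h(f\overline{a})$, and since $f\overline{a} \in \psi(D)$ and $h$ preserves pp formulas, $\overline{b} \in \psi(D')$. Thus $\phi \leq_{\mathcal{D}} \psi$, giving $\psi \in \langle\phi\rangle_{\mathcal{D}}$.

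The only potential friction is justifying the free-realisation property from the stated hypotheses, but this is a well-known equivalence in the model theory of modules (visible, for instance, in the discussion around pp-pairs and finitely presented functors in \cite{PreNBK}), and does not require $\overline{a}$ to be a generating tuple for $A$. The argument never uses anything about $\mathcal{D}$ beyond being a definable subcategory containing $D$ (so preenvelopes exist and factor through), and does not require $D$ itself to lie in $\mathrm{mod}\text{-}R$, so the generality of the lemma matches the conclusion exactly.
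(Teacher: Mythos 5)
Your proof is correct and follows essentially the same route as the paper: you use the free-realisation property of $(A,\overline{a})$ to produce a morphism $A\to D'$ hitting any $\overline{b}\in\phi(D')$ and then factor it through the preenvelope $f$ to get ${\rm pp}^D(f\overline{a})\subseteq{\rm pp}^{D'}(\overline{b})$, which is exactly the paper's argument; you merely spell out the easy containment $\langle\phi\rangle_{\mathcal{D}}\subseteq{\rm pp}^D(f\overline{a})$ that the paper leaves implicit.
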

\begin{proof}  Since morphisms preserve pp formulas, certainly $\phi \in {\rm pp}^D(f\overline{a})$.  If $D'\in {\cal D}$ and $\overline{b}\in \phi(D')$, then there is, by \cite[1.2.17]{PreNBK}, a morphism $A \to D'$ taking $\overline{a}$ to $\overline{b}$ and hence a morphism from $D$ to $D'$ taking $f\overline{a}$ to $\overline{b}$.  Therefore, ${\rm pp}^D(f\overline{a}) \subseteq {\rm pp}^{D'}(\overline{b})$.  That is, ${\rm pp}^D(f\overline{a})$ is the minimal pp-type realised in ${\cal D}$ containing $\phi$, as claimed.
\end{proof}

There is $\phi$ as in the statement of \ref{Denvpp} because every pp-type realised in a finitely presented module $A$ is finitely generated (by \cite[1.2.6]{PreNBK}).  We will use the fact, \cite[\S 1.2.2]{PreNBK}, that every pp formula $\phi$ has a {\bf free realisation} in ${\rm Mod}\mbox{-}R$, meaning a pair $(A,\overline{a})$ with $A$ finitely presented and ${\rm pp}^A(\overline{a}) = \langle \phi \rangle$.  So \ref{Denvpp} gives a weak relative version of this.  But it turns out that there is a stronger existence result:  see \ref{Mak}(b) below.

\begin{definition} Say that $M\in {\cal D}$ is ${\cal D}$-{\bf atomic}, if every pp-type realised in $M$ is ${\cal D}$-finitely generated.\footnote{The term ``pp-atomic" would be more accurate because ``atomic" means in model theory that every realised {\em complete} type is finitely generated and here we mean that every realised {\em pp-type} is finitely generated.  But, in this additive context, it is the pp-types which have a central role.}
\end{definition}

This definition, in a form that allows for $M\notin {\cal D}$, is made, and a number of properties developed, in \cite{RotHab}, though there the term ${\cal D}^{\rm d}$-Mittag-Leffler is mostly used rather than ${\cal D}$-atomic.

\begin{lemma}\label{Datds} \marginpar{Datds} (\cite[2.4]{RotHab}) Let ${\cal D}$ be a definable category.
Then the class of ${\cal D}$-atomic modules is closed under pure submodules and arbitrary direct sums.
\end{lemma}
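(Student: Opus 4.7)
The plan is to split the claim into closure under pure submodules and closure under finite direct sums, then reduce arbitrary direct sums to the finite case via the usual finite-support argument. For pure submodules, suppose $N \leq M$ is pure with $M$ ${\cal D}$-atomic. Then $N \in {\cal D}$ since definable subcategories are closed under pure submodules. Purity of the inclusion gives ${\rm pp}^N(\overline{a}) = {\rm pp}^M(\overline{a})$ for every tuple $\overline{a}$ from $N$, so any pp formula ${\cal D}$-generating the right-hand side also ${\cal D}$-generates the left.

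For finite direct sums $M_1, \dots, M_k$ of ${\cal D}$-atomic modules, the fact that each pp formula $\psi$ commutes with direct sums --- $\psi(\bigoplus_i M_i) = \bigoplus_i \psi(M_i)$ --- yields ${\rm pp}^{\bigoplus_i M_i}(\overline{a}) = \bigcap_i {\rm pp}^{M_i}(\overline{a}_i)$ for $\overline{a} = (\overline{a}_1, \dots, \overline{a}_k)$. Choosing ${\cal D}$-generators $\phi_i$ of ${\rm pp}^{M_i}(\overline{a}_i)$, I claim the pp formula $\phi_1 + \cdots + \phi_k$ --- built by existentially combining the $\phi_i$ so that its interpretation in any module is the subgroup sum $\sum_i \phi_i(-)$ --- ${\cal D}$-generates the intersection. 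It lies in the pp-type because $\overline{a} = \sum_i (0, \dots, \overline{a}_i, \dots, 0)$, with the $i$th summand in $\phi_i(\bigoplus_j M_j)$; and for any $\psi$ in the intersection, $\phi_i \leq_{\cal D} \psi$ for each $i$, so $(\phi_1 + \cdots + \phi_k)(D) = \sum_i \phi_i(D) \leq \psi(D)$ for every $D \in {\cal D}$.

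For an arbitrary direct sum $\bigoplus_{\lambda \in \Lambda} M_\lambda$ of ${\cal D}$-atomic modules --- which lies in ${\cal D}$ by closure under direct sums --- any finite tuple has finite support $F \subseteq \Lambda$ and so lies in the direct summand, and therefore pure submodule, $\bigoplus_{\lambda \in F} M_\lambda$; the finite and pure-submodule cases combine to give that its pp-type is ${\cal D}$-generated. I do not foresee a substantive obstacle; the only delicate point is verifying that the subgroup-sum is realised by an honest pp formula (a standard existential-conjunction construction), and confirming that the closure properties invoked ($N$, $\bigoplus_\lambda M_\lambda \in {\cal D}$) are exactly those defining a definable subcategory.
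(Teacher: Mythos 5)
Your proposal is correct and follows essentially the same route as the paper: the pure-submodule case is immediate from preservation of pp-types under pure embeddings, arbitrary direct sums reduce to the finite case by the finite-support observation, and the finite case uses the sum formula $\phi_1+\cdots+\phi_k$ as ${\cal D}$-generator. The only difference is that you spell out why $\phi_1+\cdots+\phi_k$ works (via $\psi(\bigoplus_i M_i)=\bigoplus_i\psi(M_i)$ and hence ${\rm pp}^{\bigoplus_i M_i}(\overline a)=\bigcap_i{\rm pp}^{M_i}(\overline a_i)$), whereas the paper delegates that step to a citation of \cite[1.2.27]{PreNBK}; your verification is a correct re-proof of that fact.
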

\begin{proof} Closure under pure submodules is immediate from the definition since pp-types are preserved under pure embeddings.

For closure under direct sums, since the definition is a condition on finite tuples of elements, and pp-types are, as already remarked, unchanged in pure submodules (in particular, in direct summands), it is enough to prove the case where $I$ is finite, indeed, the case where $I = \{ 1, 2\}$.  Take $\overline{a} = (\overline{a}_1, \overline{a}_2) \in D_1\oplus D_2$ with $D_1, D_2 \in {\cal D}$.  Let $\phi_1$ be a pp formula which ${\cal D}$-generates the pp-type of $\overline{a}_1$ in $D_1$ (equally of $(\overline{a}_1, \overline{0})$ in $D_1 \oplus D_2$), and similarly take $\phi_2$ for $\overline{a}_2$.  Then by (the proof of) \cite[1.2.27]{PreNBK}, the pp-type of $\overline{a}$ in $D_1 \oplus D_2$ is ${\cal D}$-generated by the pp formula $\phi_1 + \phi_2$ (the sum of pp formulas in defined in Section \ref{secmodth}), as required.
\end{proof}

We say that $M \in {\cal D}$ is ${\cal D}$-{\bf pure-projective} if every pure epimorphism $D\to M$ with $D \in {\cal D}$ splits.  We will see below (\ref{Dpureepi}) that this is equivalent to the property that morphisms from $M$ lift over pure epimorphisms in ${\cal D}$.  First we recall a characterisation of {\bf pure epimorphisms} =  cokernels of pure monomorphisms.

\begin{prop}\label{pureepichar} \marginpar{pureepichar} (see \cite[2.1.14]{PreNBK}) A morphism $f:N\to M$ of $R$-modules is a pure epimorphism iff, for every tuple $\overline{a}$ from $M$ and every pp formula $\phi$ such that $\overline{a} \in \phi(M)$, there is $\overline{b} \in \phi(N)$ with $f\overline{b} =\overline{a}$.
\end{prop}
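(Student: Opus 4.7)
The plan is to prove the two directions separately, both by reducing to standard characterisations of pure-exact sequences. The forward direction is essentially the definition of purity, while the backward direction requires one extra unpacking via free realisations of pp formulas.

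For ($\Rightarrow$), I would start from the pure-exact sequence $0 \to K \to N \xrightarrow{f} M \to 0$ associated to the pure epimorphism. A standard equivalent formulation of purity of such a sequence (see \cite[2.1.14]{PreNBK}) is that for every pp formula $\phi$, $f(\phi(N)) = \phi(M)$. Given $\overline{a} \in \phi(M)$, this equality supplies $\overline{b} \in \phi(N)$ with $f\overline{b} = \overline{a}$, which is exactly the lifting property claimed.

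For ($\Leftarrow$), the idea is to reduce to the characterisation of pure epimorphisms as those morphisms through which every morphism from a finitely presented module factors. First, applying the lifting hypothesis to the trivial pp formula $x = x$ shows that $f$ is surjective, so we have a short exact sequence $0 \to K \to N \xrightarrow{f} M \to 0$. Now let $A$ be a finitely presented module and $g \colon A \to M$ any morphism. Choose a generating tuple $\overline{a}$ of $A$; since $A$ is finitely presented, ${\rm pp}^A(\overline{a}) = \langle \phi \rangle$ for some pp formula $\phi$, and $(A,\overline{a})$ is a free realisation of $\phi$. Then $g\overline{a} \in \phi(M)$, so by hypothesis there exists $\overline{b} \in \phi(N)$ with $f\overline{b} = g\overline{a}$. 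By the free realisation property \cite[1.2.17]{PreNBK}, there is a morphism $h \colon A \to N$ with $h\overline{a} = \overline{b}$, and then $fh$ agrees with $g$ on the generating tuple $\overline{a}$, hence $fh = g$.

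This shows every morphism from a finitely presented module factors through $f$, which is equivalent to $f$ being a pure epimorphism. The step likely to require the most care is being clear about which version of the purity characterisation is being invoked at which point; since the statement is itself cited from \cite[2.1.14]{PreNBK}, the main work is just assembling the pieces: formula preservation in one direction, free realisations plus the finitely-presented-lifting characterisation of pure epis in the other.
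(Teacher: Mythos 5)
The paper itself gives no proof of this proposition; it is simply cited from \cite[2.1.14]{PreNBK}, so there is no ``paper's approach'' to compare against. Your argument is correct: the forward direction unwinds the standard solution-set characterisation of pure epimorphisms, and the backward direction correctly uses surjectivity (via $x=x$), free realisations, and the factorisation characterisation of pure epimorphisms through finitely presented modules. The one small thing worth making explicit is that in the backward direction you invoke that a surjection $f$ is a pure epimorphism iff ${\rm Hom}(A,N)\to{\rm Hom}(A,M)$ is onto for all finitely presented $A$ --- which is itself one of the equivalences in the cited result --- so the whole proof is really a derivation of one equivalent condition in \cite[2.1.14]{PreNBK} from another, which is a perfectly reasonable way to present it.
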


For ease of reference, we note the closure of definable subcategories under pullbacks of pure epimorphisms and pushouts of pure monomorphisms.

\begin{lemma}\label{pureepipbk} \marginpar{pureepipbk} If ${\cal D}$ is a definable subcategory of ${\rm Mod}\mbox{-}R$ and if the following diagram is a pullback with $M, D, D'$ all in ${\cal D}$ and with $p$ a pure epimorphism, then $X\in {\cal D}$.

$\xymatrix{X \ar[d] \ar[r] & M \ar[d]^f \\
D \ar[r]_p & D'}$

\end{lemma}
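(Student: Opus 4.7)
The plan is to realise $X$ as the kernel of a pure epimorphism from a module known to be in $\mathcal{D}$ to $D'$, and then invoke closure of $\mathcal{D}$ under pure submodules.

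Concretely, I would first identify $X$ with the submodule $\{(d,m)\in D\oplus M : p(d)=f(m)\}$ of $D\oplus M$, so that $X$ is the kernel of the map
\[
\delta:D\oplus M\to D',\qquad \delta(d,m)=p(d)-f(m).
\]
Since $D$ and $M$ lie in $\mathcal{D}$, so does $D\oplus M$ (definable subcategories are closed under finite direct sums, indeed arbitrary products). Thus it suffices to show that $\delta$ is a pure epimorphism, for then $X$ will be a pure submodule of $D\oplus M\in\mathcal{D}$, hence in $\mathcal{D}$.

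To verify that $\delta$ is a pure epimorphism I would apply the criterion of \ref{pureepichar}. Given a tuple $\overline{d}'$ from $D'$ and a pp formula $\phi$ with $\overline{d}'\in\phi(D')$, the hypothesis that $p$ is a pure epimorphism supplies, again by \ref{pureepichar}, a tuple $\overline{d}\in\phi(D)$ with $p(\overline{d})=\overline{d}'$. Then $(\overline{d},\overline{0})$ is a tuple from $D\oplus M$; it satisfies $\phi$ (pp formulas are preserved by the inclusion of a direct summand, or equivalently, $\phi$ is satisfied coordinatewise in a product), and $\delta(\overline{d},\overline{0})=p(\overline{d})-f(\overline{0})=\overline{d}'$. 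This is exactly the lifting property required to conclude that $\delta$ is a pure epimorphism.

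I do not anticipate any real obstacle: once one writes the pullback explicitly as the kernel of $\delta$, the only thing to check is the purity of $\delta$, and the simple device of lifting through the first coordinate (with zero in the second) reduces this immediately to the given purity of $p$. No appeal to closure under extensions is needed, which is the right thing since definable subcategories are not in general closed under extensions.
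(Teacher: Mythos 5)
Your proof is correct, and it takes a somewhat different route from the paper's. The paper realises $X$ as the submodule $\{(m,d):fm=pd\}$ of $M\oplus D$ and verifies its purity there directly: given a pp formula $\phi=\exists\overline{y}\,\theta$ satisfied by an element of $X$ in $M\oplus D$, it must produce witnesses to the quantifiers that lie inside $X$, which forces the little coherence manipulation with $\overline{b}+\overline{e}$. You instead exhibit $X$ as the kernel of $\delta:D\oplus M\to D'$ and shift the burden to showing that $\delta$ is a pure epimorphism; that check is much lighter, since you may lift through the $D$-coordinate alone, putting $\overline{0}$ in the $M$-coordinate, so no coherence between the two components is required. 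You then appeal to the standard equivalence that the pp-lifting criterion for $\delta$ (as in \ref{pureepichar}) is the same as pure-exactness of $0\to X\to D\oplus M\to D'\to 0$, which makes $X$ a pure submodule of $D\oplus M\in\mathcal{D}$. Both arguments ultimately rest on the same lift along $p$, but your factoring through $\delta$ eliminates the witness book-keeping; the cost is invoking the (standard) kernel-of-a-pure-epi characterisation rather than working entirely from the definition of purity. Your closing remark is also apt: the argument is correctly routed through pure submodules, not through closure under kernels or extensions, which definable subcategories do not enjoy in general.
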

\begin{proof}  Recall that $X=\{(m,d): fm=pd\} \leq M \oplus D$ and the morphisms from $X$ are the restrictions of the projection maps.  We show that $X$ is pure in $M \oplus D$, from which the conclusion follows.

Suppose\footnote{To check purity it is enough to consider pp formulas in one free variable, see \cite[2.1.6]{PreNBK}; or just put a bar over $m$, $d$, etc.} that $M \oplus D \models \phi((m,d))$ with $fm=pd$; say $M\models \theta((m,d), (\overline{n}, \overline{e}))$ where $\phi$ is $\exists \overline{y} \, \theta(x, \overline{y})$ with $\theta$ quantifier-free, $\overline{n}$ from $M$ and $\overline{e}$ from $D$.  So $M\models \theta(m, \overline{n})$, hence $D'\models \theta(fm, f\overline{n})$; also $D\models \theta(d, \overline{e})$, hence $D' \models \theta(pd, p\overline{e})$.  Since $fm=pd$, this gives $D'\models \theta(0, f\overline{n} - p\overline{e})$.  Since $p$ is a pure epimorphism, there exists $\overline{b}$ from $D$ with $D\models \theta(0, \overline{b})$ and $p\overline{b}= f\overline{n} - p\overline{e}$.  That is, $p(\overline{b} + \overline{e}) = f\overline{n}$, and so $(\overline{n}, \overline{b} + \overline{e})$ is in $X$.

Since $D\models \theta(d, \overline{e})$ and $D\models \theta(0, \overline{b})$ we deduce $D\models \theta(d, \overline{b} + \overline{e})$.  Together with $M\models \theta(m, \overline{n})$, this gives $M \oplus D \models \theta((m,d), (\overline{n}, \overline{b} + \overline{e}))$.  Therefore $X \models \theta((m,d), (\overline{n}, \overline{b} + \overline{e}))$, hence $X \models \phi((m,d))$, as required.
\end{proof}

\begin{lemma}\label{puremonopo} \marginpar{puremonopo} If ${\cal D}$ is a definable subcategory of ${\rm Mod}\mbox{-}R$ and if the following diagram is a pushout with $M, D, D'$ all in ${\cal D}$ and with $i$ a pure monomorphism, then $Y\in {\cal D}$.

$\xymatrix{D' \ar[r]^i \ar[d]_f & D \ar[d] \\
M \ar[r] & Y}$

\end{lemma}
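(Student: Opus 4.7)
The plan is to dualize the proof of Lemma \ref{pureepipbk}. The pushout $Y$ can be realised as the cokernel of the map $j: D' \to M \oplus D$ defined by $j(d') = (f(d'), -i(d'))$, giving a short exact sequence
\[
0 \to D' \xrightarrow{\ j\ } M \oplus D \xrightarrow{\ q\ } Y \to 0.
\]
Since definable categories are closed under finite direct sums, $M \oplus D \in {\cal D}$. So it suffices to show that $q$ is a pure epimorphism and then invoke closure of ${\cal D}$ under pure-epimorphic images.

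First I would show that $j$ is a pure monomorphism. Let $\phi(\overline{x})$ be a pp formula and $\overline{a}$ a tuple from $D'$. Since pp formulas commute with finite direct sums, $\phi(M \oplus D) = \phi(M) \oplus \phi(D)$, so $M \oplus D \models \phi(j\overline{a})$ is equivalent to the conjunction $M \models \phi(f\overline{a})$ and $D \models \phi(-i\overline{a})$. The latter is equivalent to $D \models \phi(i\overline{a})$ since pp formulas are closed under additive inverses, and then purity of $i$ yields $D' \models \phi(\overline{a})$, as required. Equivalently, by the dual of \ref{pureepichar}, $q$ is a pure epimorphism.

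For the concluding step, I would use that definable subcategories of ${\rm Mod}\mbox{-}R$ are closed under pure-epimorphic images. This is a consequence of the pp-pair characterisation of ${\cal D}$ together with Proposition \ref{pureepichar}: if $(\phi, \psi)$ with $\phi \leq \psi$ is any pp-pair such that $\phi(X) = \psi(X)$ for all $X \in {\cal D}$, then Proposition \ref{pureepichar} applied to the pure epi $q$ gives $\phi(Y) = q(\phi(M \oplus D))$ and $\psi(Y) = q(\psi(M \oplus D))$, so $\phi(M \oplus D) = \psi(M \oplus D)$ forces $\phi(Y) = \psi(Y)$, placing $Y$ in ${\cal D}$.

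There is no substantial obstacle: the argument is the exact dual of the proof of \ref{pureepipbk}, with ``pure submodule of the direct sum'' replaced by ``pure quotient of the direct sum''. The only care needed is the sign in the definition of $j$ so that its image is the pushout kernel, and the brief verification that closure under pure-epimorphic images really does follow from the pp-pair definition of a definable subcategory.
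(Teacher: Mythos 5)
Your proof is correct and follows essentially the same route as the paper: realise $Y$ as the quotient of $M\oplus D$ by the anti-diagonal image of $D'$, check that this image is pure using that pp formulas commute with finite direct sums together with the purity of $i$, and then conclude via closure of definable subcategories under pure-epimorphic images. The only cosmetic difference is that you phrase the middle step as ``$j$ is a pure monomorphism, hence $q$ is a pure epimorphism'' where the paper says ``$(f,-i)D'$ is a pure submodule''; these are the same fact.
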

\begin{proof}  The pushout is the factor of $M\oplus D$ by the anti-diagonal image, $(f,-i)D'$, of $D'$ but this is a pure submodule of $M\oplus D$ because, if $M\oplus D \models \phi((fd,-id))$ for some pp formula $\phi$ and $d\in D'$, then $D\models \phi(id)$, so $D'\models \phi(d)$ since $D'$ is pure in $D$.  But then its image $(fd, -id))$ under $(f,-i)$ also satisfies $\phi$, as required.  Since definable categories are closed under pure-epimorphic images, $Y= M\oplus D /(f, -i)D' \in {\cal D}$.
\end{proof}

\begin{lemma}\label{Dpureepi} \marginpar{Dpureepi} Let ${\cal D}$ be a definable category.  Then $M\in {\cal D}$ is ${\cal D}$-pure-projective iff, given $D, D' \in {\cal D}$, $p:D \to D'$ a pure epimorphism and $f:M\to D'$, then there is a morphism $g:M \to D$ with $pg=f$.

$\xymatrix{ & M \ar[d]^f \ar@{.>}[dl]_g\\
D \ar[r]_p & D'}$
\end{lemma}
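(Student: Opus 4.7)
The plan is the usual pullback trick, made to stay inside $\mathcal{D}$ by appealing to Lemma \ref{pureepipbk}.

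The ($\Leftarrow$) direction is immediate: if $p : D \to M$ is a pure epimorphism with $D \in \mathcal{D}$, apply the lifting property to $f = \mathrm{id}_M : M \to M$ to produce $g : M \to D$ with $pg = \mathrm{id}_M$, which splits $p$.

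For ($\Rightarrow$), suppose $M$ is $\mathcal{D}$-pure-projective, and let $f : M \to D'$ and $p : D \to D'$ be given with $p$ a pure epimorphism. Form the pullback
$$\xymatrix{X \ar[r]^{q} \ar[d]_{p'} & D \ar[d]^{p} \\ M \ar[r]_{f} & D'}$$
in $\mathrm{Mod}\mbox{-}R$. First I would observe that $p'$ is a pure epimorphism: this is the standard fact that pure epimorphisms are stable under pullback, and it can be checked directly via the criterion in Proposition \ref{pureepichar} (given $\overline{a}$ in $M$ and $\phi$ with $\overline{a} \in \phi(M)$, then $f\overline{a} \in \phi(D')$, lift to some $\overline{b} \in \phi(D)$ with $p\overline{b} = f\overline{a}$, and then $(\overline{a}, \overline{b}) \in \phi(X)$ with $p'(\overline{a}, \overline{b}) = \overline{a}$). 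Next, Lemma \ref{pureepipbk} applies to the pullback square (with $M, D, D' \in \mathcal{D}$ and $p$ a pure epi) to give $X \in \mathcal{D}$. Hence $p' : X \to M$ is a pure epimorphism in $\mathcal{D}$, so by $\mathcal{D}$-pure-projectivity it splits; pick a section $s : M \to X$. Then $g := qs : M \to D$ satisfies $pg = pqs = fp's = f$, as required.

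The only nontrivial step is invoking the ambient closure lemma to land $X$ back in $\mathcal{D}$; once that is in hand the argument is the standard projectivity-via-pullback manoeuvre. No real obstacles are anticipated.
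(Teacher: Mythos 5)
Your proof is correct and follows the same route as the paper's: form the pullback, appeal to Lemma \ref{pureepipbk} to land $X$ in $\mathcal{D}$, split the pure epi $X \to M$ by $\mathcal{D}$-pure-projectivity, and compose with $X \to D$. The only difference is that you explicitly verify that the pullback $p'$ is again a pure epimorphism (which the paper leaves implicit), a small but worthwhile addition since it is precisely what makes $\mathcal{D}$-pure-projectivity applicable.
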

\begin{proof}  ($\Rightarrow$) Form the pullback as in \ref{pureepipbk}, noting that $X\to M$ is a pure epimorphism (e.g.~\cite[2.1.22]{PreNBK}) and use that $X$ as there is in ${\cal D}$ to deduce that $X\to M$ splits giving, composed with $X \to D$, the required morphism $g$.

The other direction follows by applying the property with $f=1_M$.
\end{proof}

\begin{prop} (\cite[3.9, 3.12]{RotHab}) Let ${\cal D}$ be a definable category.

\noindent (1) Every ${\cal D}$-pure-projective module is ${\cal D}$-atomic.

\noindent (2) Every countably generated ${\cal D}$-atomic module is ${\cal D}$-pure-projective.

\noindent (3) A module $M$ in ${\cal D}$ is ${\cal D}$-atomic iff every finite subset of $M$ is contained in a ${\cal D}$-pure-projective pure submodule of $M$, and this implies that every countable subset of $M$ is contained in a ${\cal D}$-pure-projective pure submodule of $M$.
\end{prop}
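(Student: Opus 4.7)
I would prove the three parts in order.

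For (1), let $M\in\mathcal{D}$ be $\mathcal{D}$-pure-projective; the plan is to exhibit $M$ as a pure submodule of a $\mathcal{D}$-atomic module and apply Lemma \ref{Datds}. For every pair $(\overline{c},\phi)$ with $\overline{c}$ a finite tuple from $M$ and $\phi\in\mathrm{pp}^M(\overline{c})$, pick a free realisation $(A_{\overline{c},\phi},\overline{e})$ of $\phi$, a $\mathcal{D}$-preenvelope $f:A_{\overline{c},\phi}\to D_{\overline{c},\phi}$, and a factorisation $h:D_{\overline{c},\phi}\to M$ of the map $A_{\overline{c},\phi}\to M$ sending $\overline{e}\mapsto\overline{c}$. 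Summing the $h$'s gives $\epsilon:K:=\bigoplus_{(\overline{c},\phi)} D_{\overline{c},\phi}\to M$ with $K\in\mathcal{D}$; Lemma \ref{Denvpp} yields $f\overline{e}\in\phi(D_{\overline{c},\phi})$, so Proposition \ref{pureepichar} certifies $\epsilon$ as a pure epimorphism. Lemma \ref{Dpureepi} then splits $\epsilon$, making $M$ pure in $K$. The main obstacle is ensuring $K$ is itself $\mathcal{D}$-atomic: Lemma \ref{Denvpp} controls only the pp-type of the image of the generator, not that of generic tuples in $D_{\overline{c},\phi}$, so one must select each $D_{\overline{c},\phi}$ to be a \emph{strictly} $\mathcal{D}$-atomic preenvelope, whose existence is Makkai's theorem \ref{Mak} below.

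For (2), given $M=\langle a_1,a_2,\ldots\rangle$ countably generated and $\mathcal{D}$-atomic together with a pure epimorphism $p:D\to M$ in $\mathcal{D}$ (Lemma \ref{Dpureepi} reduces the claim to splitting such), choose pp-formulas $\phi_n$ that $\mathcal{D}$-generate $\mathrm{pp}^M(a_1,\ldots,a_n)$, adjusted so that $\phi_{n+1}$ restricts to $\phi_n$ in the first $n$ variables. I would inductively build $b_n\in D$ with $pb_n=a_n$ and $D\models\phi_n(b_1,\ldots,b_n)$. At the step $n\to n+1$, the pp-type $\mathrm{pp}^D(b_1,\ldots,b_n)$ coincides with $\mathrm{pp}^M(a_1,\ldots,a_n)=\langle\phi_n\rangle_\mathcal{D}$ (the containment $\supseteq$ because $D\in\mathcal{D}$ satisfies $\phi_n$ on this tuple, the containment $\subseteq$ because pp-types grow under $p$), so $\exists y\,\phi_{n+1}(\overline{x}_n,y)\in\mathrm{pp}^D(b_1,\ldots,b_n)$ yields some $b'_{n+1}\in D$ with $D\models\phi_{n+1}(b_1,\ldots,b_n,b'_{n+1})$. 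Next, Proposition \ref{pureepichar} applied to the pp-formula $\phi_{n+1}(0,\ldots,0,y)$---which $M$ satisfies on $a_{n+1}-pb'_{n+1}$ by subtracting the two $\phi_{n+1}$-realisations $(a_1,\ldots,a_n,a_{n+1})$ and $(a_1,\ldots,a_n,pb'_{n+1})$---yields $z\in D$ with $pz=a_{n+1}-pb'_{n+1}$ and $D\models\phi_{n+1}(0,\ldots,0,z)$; then $b_{n+1}:=b'_{n+1}+z$ works. Because $(b_1,\ldots,b_n)$ and $(a_1,\ldots,a_n)$ share a pp-type, every $R$-linear relation on the $a_k$'s lifts to the $b_k$'s, so the assignment $a_n\mapsto b_n$ extends to a well-defined section $M\to D$.

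For (3), the reverse implication follows from (1): a $\mathcal{D}$-pure-projective pure submodule $N\subseteq M$ containing a finite tuple $\overline{a}$ is $\mathcal{D}$-atomic by (1), and $\mathrm{pp}^N(\overline{a})=\mathrm{pp}^M(\overline{a})$ by purity, so the latter is $\mathcal{D}$-finitely generated. For the forward implication with a countable $S\subseteq M$, a downward L\"owenheim--Skolem argument in the pp language---successively adjoin countably many witnesses for pp-existentials realised on finite tuples of the current submodule---produces a countable pure submodule $N\subseteq M$ containing $S$; by Lemma \ref{Datds}, $N$ is $\mathcal{D}$-atomic, and being countably generated it is $\mathcal{D}$-pure-projective by (2). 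The finite-subset case of the biconditional is the special case $|S|<\infty$ of the same construction.
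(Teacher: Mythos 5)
Your arguments for parts (2) and (3) are correct and essentially the same as the paper's. (In (2), the clause requiring $\phi_{n+1}$ to ``restrict to $\phi_n$'' is unnecessary and is not used in your inductive step; drop it.)

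Part (1) is where you take a genuinely different route. The paper also splits a pure epimorphism $f:D=\bigoplus_i D_i\to M$ from a direct sum of ${\cal D}$-preenvelopes of finitely presented modules, but it does \emph{not} attempt to show $D$ is ${\cal D}$-atomic; instead it traces the pp-type of a single tuple $\overline{c}$ from $M$ through the factorisation, choosing a preimage $\overline{a}$ in $\bigoplus_i A_i$ and applying Lemma \ref{Denvpp} to $\overline{a}$. You have put your finger on the subtle point: Lemma \ref{Denvpp} controls only the pp-type of the image of the chosen generating tuple of $A_i$, not of arbitrary tuples of an arbitrary preenvelope $D_i$, so as written $D$ is not obviously ${\cal D}$-atomic. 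Your remedy — take each $D_i$ to be a \emph{strictly} ${\cal D}$-atomic preenvelope, so that Lemma \ref{Datds} gives atomicity of $D$ and hence of the pure submodule $M$ — is correct and closes that gap, but at the cost of invoking Theorem \ref{Mak}. That forward reference is logically unobjectionable (the proof of Makkai's theorem given later does not use this proposition), but it is a much heavier hammer than the paper intends for what is presented as a warm-up lemma preceding Makkai's result; the non-relative analogue, after all, is elementary, with the finitely presented summands $A_i$ themselves playing the role that your strictly ${\cal D}$-atomic $D_i$'s play. So: the paper's version is shorter but terse at the point you identified, while yours is longer and makes that point fully rigorous at the expense of a forward reference and a stronger existence input.
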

\begin{proof} The proofs are as in the non-relative case but we include proofs of (1) and (2) since they illustrate some of the techniques we use in the paper.

(1) Suppose that $M$ is ${\cal D}$-pure-projective.  Every module is a pure epimorphic image of a direct sum of finitely presented modules (e.g.~\cite[2.1.25]{PreNBK}):  say $p: \bigoplus_i \, A_i \to M$ is a pure epimorphism with each $A_i$ finitely presented; set $p_i:A_i \to M$ to be the $i$th component of $p$.  For each $i$ choose a ${\cal D}$-preenvelope $g_i:A \to D_i \in {\cal D}$ and a factorisation $f_ig_i =p_i$ of $p_i$.  Set $g=(g_i)_i$.  The morphism $f=(f_i):D= \bigoplus_i \, D_i \to M$ is, applying \ref{pureepichar}, a pure epimorphism, hence splits; let $h:M \to D$ be a splitting of $f$.  

Given $\overline{c}$ from $M$, choose $\overline{a}$ from $\bigoplus_i \, A_i$ with $g\overline{a} = h\overline{c}$, so $fg\overline{a} = \overline{c}$.  By \ref{Denvpp}, ${\rm pp}^D(f\overline{a})$ is ${\cal D}$-generated by any pp formula $\phi$ which generates ${\rm pp}^A(\overline{a})$ and, since $M$ is a direct summand of $D$, ${\rm pp}^M(\overline{c}) = {\rm pp}^D(h\overline{c})$ is ${\cal D}$-finitely generated by $\phi$, as required.

\noindent (2)  Suppose that $a_1, a_2, \dots, a_n, \dots$ is an enumeration of a countable set of generators for the ${\cal D}$-atomic module $M$ and suppose that $\pi:D \to M$ is a pure epimorphism.  Let $\phi_1 = \phi(x_1)$ ${\cal D}$-generate ${\rm pp}^M(a_1)$.  By assumption and \ref{pureepichar} there is $c_1\in \phi_1(D)$ with $\pi c_1 =a_1$.  Note that, since morphisms are non-decreasing on pp-types, ${\rm pp}^D(c_1)$ is therefore ${\cal D}$-generated by $\phi_1$.

Choose $\phi_2 = \phi_2(x_1, x_2)$ which ${\cal D}$-generates ${\rm pp}^M(a_1, a_2)$.  Then $\exists x_2 \, \phi_2(x_1, x_2) \in {\rm pp}^M(x_1) = {\rm pp}^D(c_1)$, so there is $c'_2\in D$ with $(c_1,c'_2) \in \phi_2(D)$.  Now, ${\rm pp}^D(c_1,c'_2)$ might strictly contain $\langle \phi_2 \rangle_{\cal D}$ but, since $\pi$ is a pure epimorphism, there is, as above, some $(b_1,b_2) \in \phi_2(D)$ with $\pi b_1 =a_1$ and $\pi b_2 =a_2$.  Then we have $D\models \phi_2(c_1-b_1, c'_2-b_2)$ and also $c_1 -b_1 \in {\rm ker}(\pi)$.  Since ${\rm ker}(\pi)$ is pure in $D$, there is $d_2\in {\rm ker}(\pi)$ with $D\models \phi_2(c_1-b_1, d_2)$.  Combining with $\phi_2(b_1, b_2)$ gives $D\models \phi_2(c_1, b_2+d_2)$; set $c_2=b_2+d_2$.  Noting that $\pi : (c_1, c_2) \mapsto (a_1, a_2)$, we conclude that ${\rm pp}^D(c_1, c_2) = \langle \phi_2 \rangle _{\cal D} = {\rm pp}^M(a_1, a_2)$.

We continue in this way, to obtain $c_1, c_2, \dots \in D$ with the same pp-type as $a_1, a_2, \dots$.  In particular we have a map $f:M \to D$, well-defined by $fa_i =c_i$ (since any $R$-linear relation between $a_1,\dots, a_n$ is part of ${\rm pp}^M(a_1,\dots, a_n)= {\rm pp}^C(c_1,\dots, c_n)$), splitting $\pi$, as required.
\end{proof}

\begin{rmk} It is shown in \cite[3.1]{KucRot} that a pp-constructible module is pure-projective, where a module $M$ is {\bf pp-constructible} if it is the union $M=\bigcup_{i<\alpha} \, A_i$ of subsets where, for each $i \geq -1$, $A_{i+1}$ is the union of $A_i$ (take $A_{-1} = \emptyset$) and (the entries of) some finite tuple $\overline{a}_i$ of elements of $M$ such that the pp-type, ${\rm pp}^M(\overline{a}_i/A_i)$, of $\overline{a}_i$ in $M$ over $A_i$ is finitely generated.  Again, this - both definition and result - can be relativised to a definable category ${\cal D}$ by taking $M\in {\cal D}$ and requiring the pp-types ${\rm pp}^M(\overline{a}_i/A_i)$ to be ${\cal D}$-finitely generated.
\end{rmk}

To continue with some degree of self-containedness, we now give a proof (essentially that of Rothmaler \cite[2.2]{RotHab}) of the relative version of \ref{MLchar} (at least, of the equivalence of (ii) and (iii) there).

First we recall Herzog's criterion for a tensor to be $0$.  Here $D$ denotes elementary duality, see Section \ref{secmodth} (and recall from there that $M\models \phi(\overline{a})$ means the same as $\overline{a} \in \phi(M)$).

\begin{theorem}\label{Herz} \marginpar{Herz} (\cite[3.2]{HerzDual}) If $\overline{a}$ is a tuple of elements from a right $R$-module $M$ and $\overline{l}$ is a tuple of the same length from a left $R$-module $L$, then $\overline{a} \otimes \overline{l} =0$ in $M\otimes_RL$ (that is, $\sum_{i=1}^n \, a_i \otimes_R l_i =0$) iff there is a pp formula $\phi(\overline{x})$ for right $R$-modules such that $M\models \phi(\overline{a})$ and $L \models D\phi(\overline{l})$.
\end{theorem}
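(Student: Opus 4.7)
The plan is to prove both directions: the $(\Leftarrow)$ direction by a direct tensor computation from the matrix form of pp formulas, and the $(\Rightarrow)$ direction by extracting a matrix from the finitary relations that witness the vanishing of the tensor.

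For the direction $(\Leftarrow)$, I would write $\phi(\overline{x})$ in the standard matrix form $\exists \overline{y}\, \bigl((\overline{x},\overline{y})H = 0\bigr)$ with $H = \binom{G_1}{G_2}$ over $R$, where $G_1$ has $n$ rows corresponding to $\overline{x}$ and $G_2$ has $k$ rows corresponding to $\overline{y}$. The elementary dual $D\phi(\overline{x})$ can then be taken to be $\exists \overline{z}\, \bigl(\overline{x} = G_1\overline{z}\ \wedge\ G_2\overline{z} = 0\bigr)$. A witness $\overline{b}$ in $M$ for $\phi(\overline{a})$ gives $\overline{a}G_1 + \overline{b}G_2 = 0$, while a witness $\overline{c}$ in $L$ for $D\phi(\overline{l})$ gives $\overline{l} = G_1\overline{c}$ and $G_2\overline{c} = 0$. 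Bilinearity of $\otimes_R$ then collapses $\overline{a}\otimes\overline{l} = \overline{a}\otimes G_1\overline{c} = \overline{a}G_1 \otimes \overline{c} = -\overline{b}G_2\otimes\overline{c} = -\overline{b}\otimes G_2\overline{c} = 0$.

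For $(\Rightarrow)$, suppose $\sum_i a_i\otimes l_i = 0$ in $M\otimes_R L$. This vanishing must follow from finitely many of the defining relations of the tensor product: bilinearity in each factor and the scalar-transfer law $mr\otimes l = m\otimes rl$. I would encode these finitely many relations as matrix data: any additive decomposition on the $M$ side introduces auxiliary elements $b_j$ of $M$, any corresponding decomposition on the $L$ side introduces auxiliary $c_k$ of $L$, and the scalars appearing in the scalar-transfer relations package into the entries of a single matrix $H = \binom{G_1}{G_2}$ so that $\overline{b}$ witnesses $\phi(\overline{a})$ for $\phi(\overline{x}) := \exists\overline{y}\,\bigl((\overline{x},\overline{y})H = 0\bigr)$ while $\overline{c}$ witnesses $D\phi(\overline{l})$. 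An equivalent route is to factor $\overline{a}$ through a finitely presented $R$-module $A$ via a tuple $\overline{a}'$, arrange (possibly after enlarging $A$) that $\overline{a}'\otimes\overline{l} = 0$ already holds in $A\otimes_R L$, and then read $H$ directly from a finite presentation of $A$ together with a lifting of the $l_i$ to a free cover of $L$.

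The main obstacle is the bookkeeping in the $(\Rightarrow)$ direction: simultaneously recording the right-module witnesses and the left-module witnesses inside a single matrix $H$ so that one pp formula and its elementary dual are produced in parallel. This is the standard subtlety in Herzog's original argument, and I would follow the treatment in \cite[1.3.7]{PreNBK} which the statement already cites.
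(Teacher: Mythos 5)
The paper does not actually prove this theorem; it is recalled as Herzog's criterion and simply cited to \cite[1.3.7]{PreNBK}, so there is no proof in the text for your argument to be compared against. Taken on its own terms, your $(\Leftarrow)$ direction is correct and is the standard ``compute the tensor'' half: writing $\phi$ as $\exists\overline{y}\,(\overline{x}G_1+\overline{y}G_2=0)$ and $D\phi$ as $\exists\overline{z}\,(\overline{x}=-G_1\overline{z}\ \wedge\ G_2\overline{z}=0)$ (your version differs only by the harmless substitution $\overline{z}\mapsto-\overline{z}$), the chain $\overline{a}\otimes\overline{l} = (\overline{a}G_1)\otimes\overline{c} = -(\overline{b}G_2)\otimes\overline{c} = -\overline{b}\otimes(G_2\overline{c}) = 0$ goes through exactly as you describe.

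The $(\Rightarrow)$ direction, however, is still only an outline. Saying that the finitely many defining relations of $M\otimes_R L$ witnessing $\sum_i a_i\otimes l_i=0$ ``package into a single matrix $H$'' names the key idea but does not actually produce, from the same $H$, both a tuple $\overline{b}$ in $M$ with $\overline{a}G_1+\overline{b}G_2=0$ and a tuple $\overline{c}$ in $L$ with $\overline{l}=-G_1\overline{c}$ and $G_2\overline{c}=0$; arranging that the two witness systems are governed by one matrix is precisely the bookkeeping that makes Herzog's criterion nontrivial. Your alternative route (factor $\overline{a}$ through a finitely presented $A$ and read $H$ off the presentation matrix together with a lifting of $\overline{l}$ to a free cover of $L$) is indeed how the argument is usually completed, but, as you acknowledge, you are deferring that step to \cite[1.3.7]{PreNBK}. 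So what you have is a correct and well-aimed outline in both directions, with the harder direction not yet carried out in a self-contained way.
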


Also (again, see Section \ref{secmodth}) if ${\cal D}$ is a definable subcategory of ${\rm Mod}\mbox{-}R$, then ${\cal D}^{\rm d}$ denotes its elementary dual definable category - a definable subcategory of $R\mbox{-}{\rm Mod}$.

\begin{theorem} \label{relMLchar} \marginpar{relMLchar} Suppose that ${\cal D}$ is a definable subcategory of ${\rm Mod}\mbox{-}R$ and that $M\in {\rm Mod}\mbox{-}R$.  Then the following conditions are equivalent:

\noindent (i) for all sets $L_i \in {\cal D}^{\rm d}$ ($i\in I$) the canonical morphism $t: M \otimes_R \prod_i \, L_i \to \prod_i \, M\otimes_R L_i$ is monic;

\noindent (ii) every pp-type realised in $M$ is ${\cal D}$-finitely generated; that is, for every finite tuple $\overline{a}$ from $M$, there is a pp formula $\phi \in {\rm pp}^M(\overline{a})$ such that, for every $\psi \in {\rm pp}^M(\overline{a})$, we have $\phi \leq_{\cal D} \psi$.
\end{theorem}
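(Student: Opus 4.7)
The plan is to mirror the classical proof of Theorem \ref{MLchar} but carry out the dictionary between tensor vanishing and pp formulas relative to the dual definable category $\mathcal{D}^{\mathrm{d}}$. The two technical inputs I would use are Herzog's criterion \ref{Herz} for a tensor to be zero, and the standard elementary duality between pp formulas on $\mathcal{D}$ and on $\mathcal{D}^{\mathrm{d}}$: for pp formulas $\phi, \psi$ in the same free variables for right $R$-modules, one has $\phi \leq_{\mathcal{D}} \psi$ iff $D\psi \leq_{\mathcal{D}^{\mathrm{d}}} D\phi$ (this is the content of the pp-pair duality in \cite[\S 3.4.2]{PreNBK}).

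First I would rewrite the condition that $t$ is monic. Given $\overline{a} = (a_1,\dots,a_n)$ from $M$ and tuples $\overline{\ell}_j = (\ell_{j,i})_{i\in I} \in \prod_i L_i$ for $j=1,\dots,n$, the element $\sum_j a_j \otimes \overline{\ell}_j$ lies in the kernel of $t$ iff $\sum_j a_j \otimes \ell_{j,i}=0$ in $M\otimes L_i$ for every $i$. Applying Herzog \ref{Herz} componentwise, the latter says that for each $i$ there is a pp formula $\phi_i \in \mathrm{pp}^M(\overline{a})$ with $L_i \models D\phi_i(\ell_{1,i},\dots,\ell_{n,i})$; while, using that pp formulas are preserved in products, $\sum_j a_j \otimes \overline{\ell}_j = 0$ in $M\otimes \prod L_i$ iff there is a single $\phi \in \mathrm{pp}^M(\overline{a})$ with $L_i \models D\phi(\ell_{1,i},\dots,\ell_{n,i})$ for all $i$ simultaneously. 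So (i) is equivalent to the statement that any such family $(\phi_i)_i$ from $\mathrm{pp}^M(\overline{a})$ can be uniformly replaced by a single $\phi \in \mathrm{pp}^M(\overline{a})$ that is satisfied dually in every $L_i$.

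For (ii)$\Rightarrow$(i), suppose $\phi \in \mathrm{pp}^M(\overline{a})$ $\mathcal{D}$-generates $\mathrm{pp}^M(\overline{a})$. Then for each $\phi_i$ as above, $\phi \leq_{\mathcal{D}} \phi_i$, hence $D\phi_i \leq_{\mathcal{D}^{\mathrm{d}}} D\phi$. Since $L_i \in \mathcal{D}^{\mathrm{d}}$, the witness for $D\phi_i$ is also a witness for $D\phi$, so $\phi$ does the required uniform job.

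For (i)$\Rightarrow$(ii) I would argue by contrapositive. If $p = \mathrm{pp}^M(\overline{a})$ is not $\mathcal{D}$-finitely generated, then for every $\phi \in p$ there exists $\psi_\phi \in p$ with $\phi \not\leq_{\mathcal{D}} \psi_\phi$; by elementary duality this gives $L_\phi \in \mathcal{D}^{\mathrm{d}}$ and a tuple $\overline{\ell}_\phi$ with $L_\phi \models D\psi_\phi(\overline{\ell}_\phi)$ but $L_\phi \not\models D\phi(\overline{\ell}_\phi)$. Set $L = \prod_{\phi\in p} L_\phi \in \mathcal{D}^{\mathrm{d}}$ and let $\overline{\ell}_j \in L$ be the tuple whose $\phi$-component is the $j$-th entry of $\overline{\ell}_\phi$. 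Then in each coordinate $\phi$, the pair $(\psi_\phi, \overline{\ell}_\phi)$ together with $M\models \psi_\phi(\overline{a})$ forces $\sum_j a_j\otimes \ell_{j,\phi} = 0$ via Herzog, so $\sum_j a_j\otimes \overline{\ell}_j \in \ker t$. Conversely, were this element zero in $M\otimes L$, Herzog would give some $\phi_0 \in p$ with $L \models D\phi_0(\overline{\ell}_1,\dots,\overline{\ell}_n)$, and then the $\phi_0$-coordinate would yield $L_{\phi_0}\models D\phi_0(\overline{\ell}_{\phi_0})$, contradicting the choice of $\overline{\ell}_{\phi_0}$. Hence $t$ has nonzero kernel, violating (i).

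The only real obstacle is bookkeeping: making sure that the two applications of Herzog's criterion (one in each $L_i$ and one in the full product $\prod L_i$) are paired correctly with the quantifier structure of $\mathcal{D}$-generation, and invoking the duality $\phi \leq_{\mathcal{D}} \psi \Leftrightarrow D\psi \leq_{\mathcal{D}^{\mathrm{d}}} D\phi$ in the right direction. Once those are set up, both implications are essentially formal.
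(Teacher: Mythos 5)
Your proof is correct, and the core machinery --- Herzog's criterion \ref{Herz} in both directions together with the elementary duality $\phi \leq_{\cal D} \psi \Leftrightarrow D\psi \leq_{{\cal D}^{\rm d}} D\phi$ --- is exactly what the paper uses. The (ii)$\Rightarrow$(i) direction matches the paper's argument essentially line by line.

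For (i)$\Rightarrow$(ii) you diverge from the paper, and it is worth noting how. The paper argues directly: for each $\psi_i \in {\rm pp}^M(\overline{a})$ it manufactures a module $L_i \in {\cal D}^{\rm d}$ realising $D\psi_i$ \emph{minimally} modulo the theory of ${\cal D}^{\rm d}$, by taking a ${\cal D}^{\rm d}$-preenvelope of a free realisation of $D\psi_i$ and invoking \ref{Denvpp}. Then hypothesis (i) plus Herzog produces a single $\phi$ with $L_i \models D\phi(\overline{q}_i)$ for all $i$, and the minimality of ${\rm pp}^{L_i}(\overline{q}_i)$ upgrades this to $D\psi_i \leq_{{\cal D}^{\rm d}} D\phi$, i.e.\ $\phi \leq_{\cal D} \psi_i$. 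Your proof goes by contrapositive: if ${\rm pp}^M(\overline{a})$ is not ${\cal D}$-finitely generated, you directly pick witnessing modules $L_\phi$ and tuples $\overline{\ell}_\phi$ to the non-implications $\phi \not\leq_{\cal D} \psi_\phi$, assemble them into a product, and exhibit a nonzero element of $\ker t$. The gain of your route is that it bypasses the existence of preenvelopes and free realisations entirely --- you only need the definitional failure of $\leq_{{\cal D}^{\rm d}}$ to produce a witness --- so it is somewhat more elementary in its inputs and makes the failure of (i) manifestly constructive from the failure of (ii). The paper's route has the advantage of yielding the generator $\phi$ directly and of making visible the role of the ${\cal D}^{\rm d}$-preenvelope construction which is used repeatedly elsewhere in the paper. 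Both are sound; the choice is one of taste and emphasis.
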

\begin{proof} (ii)$\Rightarrow$(i) Given a set $(L_i)_i$ of modules in ${\cal D}^{\rm d}$, suppose that we have a tuple $\overline{q} = (\overline{q}_i)_i \in \prod_i \, L_i$ and matching tuple $\overline{a}$ from $M$ such that $t(\overline{a}\otimes \overline{q}) =0$.  That is $\overline{a}_i \otimes \overline{q}_i =0$ for all $i$.  Then, by Herzog's criterion \ref{Herz} there, for each $i$, is a pp formula $\psi_i$ such that $M\models \psi_i(\overline{a})$ and $L_i \models D\psi_i(\overline{q}_i)$.

Let $\phi$ be a pp formula which generates, with respect to $\leq_{\cal D}$, the pp-type of $\overline{a}$ in $M$.  Then, for each $i$, $\phi \leq_{\cal D} \psi_i$ so (Section \ref{secmodth}) $D\psi_i \leq_{{\cal D}^{\rm d}} D\phi$ and hence $L_i \models D\phi(\overline{q}_i)$.  That is true for every $i$, so $\prod_i \, L_i \models D\phi(\overline{q})$.  So, again by Herzog's Criterion and since $M\models \phi(\overline{a})$, we have $\overline{a} \otimes \overline{q} =0$ in $M\otimes \prod\, L_i$, and so $t$ is monic as claimed.

(i)$\Rightarrow$(ii)  The proof above essentially reverses.  Given $\overline{a}$ from $M$, for each $\psi_i \in {\rm pp}^M(\overline{a})$ choose $L_i \in {\cal D}^{\rm d}$ to contain a tuple $\overline{q}_i$ such that ${\rm pp}^{L_i}(\overline{q}_i)$ is ${\cal D}^{\rm d}$-generated by $D\psi_i$ - for instance, noting \ref{Denvpp}, take $L_i$ to be a ${\cal D}^{\rm d}$-preenvelope of a free realisation of $D\psi_i$ in $R\mbox{-}{\rm Mod}$.  Since $M\models \psi_i(\overline{a})$ and $L_i \models D\psi_i(\overline{q}_i)$, we have $\overline{a} \otimes \overline{q_i} =0$ in $M\otimes L_i$.

By assumption, it follows that $\overline{a} \otimes \overline{q} =0$ in $M \otimes \prod_i \, L_i$ where $\overline{q} = (\overline{q}_i)_i$.  So there is a pp formula $\phi$ such that $M\models \phi(\overline{a})$ and $L_i \models D\phi(\overline{q}_i)$ for all $i$.  By choice of $L_i$ and $\overline{q}_i$, $D\psi_i \leq_{{\cal D}^{\rm d}} D\phi$, hence $\phi \leq_{\cal D} \psi_i$ for every $i$.  That is, $\phi \leq_{\cal D} \psi$ for every $\psi \in {\rm pp}^M(\overline{a})$, as required.
\end{proof}

\section{Strictly atomic modules}\label{secSML} \marginpar{secSML}

A right module $M$ is said to be {\bf strictly Mittag-Leffler} if, for every tuple $\overline{a}$ from $M$, there is a finitely presented module $A$ and a pair, $f:M\to A$, $g:A\to M$, of morphisms such that $gf\overline{a} =\overline{a}$.  Since pp-types realised in finitely presented modules are finitely generated and since morphisms are non-decreasing on pp-types, it follows that there is a pp formula $\phi$ such that the pp-type, ${\rm pp}^M(\overline{a})$ of $\overline{a}$ in $M$ is generated by $\phi$ (and $(A, f\overline{a})$ is a free realisation of $\phi$).  Thus we obtain the following characterisation.

\begin{lemma} \label{sMLchar} \marginpar{sMLchar} A module $M$ is strictly Mittag-Leffler iff $M$ is Mittag-leffler and if, for every tuple $\overline{a}$ from $M$ and pp formula $\phi$ such that ${\rm pp}^M(\overline{a}) = \langle \phi \rangle$, if $N$ is any module and $\overline{b} \in \phi(N)$, then there is a morphism $f:M\to N$ with $f\overline{a} = \overline{b}$.
\end{lemma}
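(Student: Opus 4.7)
The plan is to prove the two implications separately, using the characterisation of free realisations recalled just before the lemma and the lifting property \cite[1.2.17]{PreNBK}.

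For $(\Rightarrow)$, assume $M$ is strictly Mittag-Leffler. Then $M$ is Mittag-Leffler because the preamble to the lemma already observes that for every tuple $\overline{a}$ from $M$ the pp-type ${\rm pp}^M(\overline{a})$ is generated by some pp formula $\phi$, and this is exactly characterisation (3) of Theorem \ref{MLchar}. For the second clause, fix $\overline{a}$ and $\phi$ with ${\rm pp}^M(\overline{a}) = \langle \phi \rangle$, and pick $A$ finitely presented together with $f:M\to A$, $g:A\to M$ with $gf\overline{a}=\overline{a}$. Since morphisms are non-decreasing on pp-types, $\phi \in {\rm pp}^M(\overline{a}) \subseteq {\rm pp}^A(f\overline{a}) \subseteq {\rm pp}^M(gf\overline{a}) = {\rm pp}^M(\overline{a})$, so ${\rm pp}^A(f\overline{a}) = \langle \phi \rangle$; that is, $(A, f\overline{a})$ is a free realisation of $\phi$, as already noted in the paragraph preceding the lemma. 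Given any $N$ and $\overline{b} \in \phi(N)$, \cite[1.2.17]{PreNBK} supplies $h:A\to N$ with $h(f\overline{a}) = \overline{b}$, and then $hf:M\to N$ sends $\overline{a}$ to $\overline{b}$.

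For $(\Leftarrow)$, assume $M$ is Mittag-Leffler and satisfies the lifting property. Given a tuple $\overline{a}$ from $M$, Mittag-Lefflerness gives a pp formula $\phi$ with ${\rm pp}^M(\overline{a}) = \langle \phi \rangle$. Choose a free realisation $(A, \overline{a}')$ of $\phi$ in ${\rm Mod}\mbox{-}R$, so $A$ is finitely presented and $\overline{a}' \in \phi(A)$. Applying the lifting hypothesis with $N=A$ and $\overline{b} = \overline{a}'$ produces $f:M\to A$ with $f\overline{a} = \overline{a}'$. Conversely, because $(A, \overline{a}')$ is a free realisation of $\phi$ and $\overline{a} \in \phi(M)$, \cite[1.2.17]{PreNBK} supplies $g:A\to M$ with $g\overline{a}' = \overline{a}$. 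Then $gf\overline{a} = g\overline{a}' = \overline{a}$, exhibiting the strict Mittag-Leffler factorisation through a finitely presented module.

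There is no real obstacle; the only thing to be careful about is the direction of each arrow in the backward implication, making sure to use the hypothesis to produce $f:M\to A$ and the free-realisation property to produce $g:A\to M$, rather than the other way round.
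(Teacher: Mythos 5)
Your proof is correct and is essentially the same as the paper's, just written out in full: both directions hinge on taking $A$ to be a free realisation of $\phi$ and invoking the lifting property \cite[1.2.17]{PreNBK} through it.
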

\begin{proof}  For ($\Rightarrow$), take $A$ in the definition to be a free realisation of $\phi$, noting that there will then be a morphism from $M$ to $A$ and another from $A$ to $N$.  For the other direction, again take $A$ to be a free realisation of $\phi$.  
\end{proof}

Definable categories do not, in general, have enough finitely presented objects (those which do are exactly the locally finitely presented categories with products), indeed they may have $0$ as the only finitely presented object \cite[18.1.1]{PreNBK} but the property above - which, \cite[1.2.7]{PreNBK}, is a property of finitely presented modules - does generalise.  Indeed, it will be the strictly ${\cal D}$-atomic modules, defined below and generalising the strictly Mittag-Leffler modules, that are the next best thing to finitely presented objects in definable categories.  Makkai \cite[4.1]{MakkTop}, \cite[4.4]{Makk} proved that there is a $\varinjlim$-generating set of these in every definable category.  In fact, his result in \cite{Makk} is more general in two directions:  it includes infinitary versions (which allow infinitary pp formulas and infinite tuples of elements) and his results apply in general categories of models of regular theories (see \cite{Butz} for these).  We will come back to his result but now we consider the following property equivalent to being strictly ML.

An epimorphism $f:N\to M$ is {\bf locally split} if, for every tuple $\overline{a}$ from $M$ there is a `local section', that is a morphism $g:M \to N$ such that $fg\overline{a} = \overline{a}$.  A module is {\bf locally pure-projective} \cite{Azu} if every pure epimorphism to it locally splits.

\begin{prop} (\cite[Thm.~5]{Azu}) A module is strictly Mittag-Leffler iff it is locally pure-projective.
\end{prop}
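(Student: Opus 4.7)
The plan is to prove each direction as a short direct argument, leveraging two standard facts: finitely presented modules are pure-projective, and every module is a pure epimorphic image of a direct sum of finitely presented modules (e.g.~\cite[2.1.25, 2.1.26]{PreNBK}).

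For ($\Rightarrow$), suppose $M$ is strictly Mittag-Leffler, and let $p\colon N\to M$ be a pure epimorphism with $\overline{a}$ a tuple from $M$. I would apply the strict ML hypothesis to obtain a finitely presented $A$ together with $f\colon M\to A$ and $g\colon A\to M$ satisfying $gf\overline{a}=\overline{a}$. Since $A$ is finitely presented it is pure-projective, so $g$ lifts through $p$ to some $h\colon A\to N$ with $ph=g$. Then $hf\colon M\to N$ is the required local section: $p(hf)\overline{a}=gf\overline{a}=\overline{a}$. Hence $p$ locally splits and $M$ is locally pure-projective.

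For ($\Leftarrow$), suppose $M$ is locally pure-projective and fix a tuple $\overline{a}$ from $M$. I would choose a pure epimorphism $p\colon\bigoplus_{i\in I}A_i\to M$ with each $A_i$ finitely presented. Local pure-projectivity gives a morphism $h\colon M\to\bigoplus_i A_i$ with $ph\overline{a}=\overline{a}$. Since $h\overline{a}$ has only finitely many nonzero components, it lies in a finite sub-sum $A=A_{i_1}\oplus\dots\oplus A_{i_k}$, which is still finitely presented. Composing $h$ with the projection $\bigoplus_i A_i\to A$ yields $f\colon M\to A$, and composing the inclusion $A\hookrightarrow\bigoplus_i A_i$ with $p$ yields $g\colon A\to M$; these satisfy $gf\overline{a}=\overline{a}$, which is exactly the strict Mittag-Leffler condition on the tuple $\overline{a}$.

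I do not anticipate a real obstacle: once one has the two facts cited above, the argument is essentially bookkeeping. The only subtlety worth flagging explicitly is that in the reverse direction we must remember it is the \emph{finite} tuple $\overline{a}$ (and not $M$ itself) that has to be captured by the finitely presented $A$; this is precisely what lets one pass from the possibly infinite direct sum $\bigoplus_i A_i$ to the finitely presented summand $A$ and is what makes the definition of strict ML a pointwise, rather than global, notion.
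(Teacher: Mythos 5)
Your proof is correct. The paper itself does not prove this proposition --- it is stated with a citation to Azumaya --- but it does later prove the relativised version in Corollary~\ref{DateqDpp} (that $M\in{\cal D}$ is strictly ${\cal D}$-atomic iff it is locally ${\cal D}$-pure-projective), which specialises to the present statement when ${\cal D}={\rm Mod}\mbox{-}R$. The two arguments are similar in shape but differ in technique: the paper's relative argument is forced to work with pp-types and the ``free realisation'' formulation of strict atomicity, since a definable category may contain no nonzero finitely presented objects to reduce to; it invokes the pp-characterisation of pure epimorphisms (\ref{pureepichar}) in one direction and the existence of a pure epimorphism from a direct sum of strictly ${\cal D}$-atomic modules (\ref{pureepi}) in the other. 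Your argument instead exploits the original Raynaud--Gruson/Azumaya definition of strict ML via a pair $f:M\to A$, $g:A\to M$ with $A$ finitely presented, so you can appeal directly to the pure-projectivity of $A$ in one direction and to the finite support of $h\overline{a}$ inside $\bigoplus_i A_i$ in the other. The payoff of your version is that it is entirely model-theory-free and reads straight off the definitions; the cost is that it does not relativise: the finite sub-sum $A$ you extract in the reverse direction is finitely presented in ${\rm Mod}\mbox{-}R$ but need not lie in ${\cal D}$, which is precisely why the paper's relative argument must replace ``finitely presented'' by ``strictly ${\cal D}$-atomic'' and hence take the pp-type route.
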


\begin{definition} Given a definable category ${\cal D}$, we say that a module $M\in {\cal D}$ is {\bf strictly} ${\cal D}$-{\bf atomic} if it is ${\cal D}$-atomic and if, for every tuple $\overline{a}$ from $M$, with pp-type ${\cal D}$-generated by, say, $\phi$, and for every $D\in {\cal D}$ and $\overline{b}\in \phi(D)$, there is a morphism $f:M\to D$ with $f\overline{a} = \overline{b}$.

Say that $M \in {\cal D}$ is {\bf locally} ${\cal D}${\bf -{pure-projective}} if every pure epimorphism $D \to M$ with $D\in {\cal D}$ locally splits.
\end{definition}

Note that \cite[2.1]{RotSML} allows a more general relative notion in that the module $M$ is not required to be in ${\cal D}$.

We will show (\ref{DateqDpp} below) that the strictly ${\cal D}$-atomic objects are exactly the locally ${\cal D}$-pure-projectives. 

\vspace{4pt}

In \cite{Makk} Makkai uses the term {\bf principal prime} or {\bf pp} object of ${\cal D}$ for what we have termed strictly ${\cal D}$-atomic.

\begin{lemma}\label{dsstrDat} \marginpar{dsstrDat} (\cite[2.5]{RotSML}) Suppose that ${\cal D}$ is a definable category.  Every pure submodule of a strictly ${\cal D}$-atomic module is strictly ${\cal D}$-atomic and every direct sum of strictly ${\cal D}$-atomic modules is strictly ${\cal D}$-atomic.
\end{lemma}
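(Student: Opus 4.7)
The plan is to handle the two claims separately, in both cases leveraging that $\mathcal{D}$-atomicity of the relevant module is already given by Lemma \ref{Datds}; only the strictness part needs a fresh argument.

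For pure submodules: Let $M$ be strictly $\mathcal{D}$-atomic and let $N \hookrightarrow M$ be a pure submodule. Note $N \in \mathcal{D}$ since definable categories are closed under pure subobjects, and $N$ is $\mathcal{D}$-atomic by \ref{Datds}. Given a tuple $\overline{a}$ from $N$, purity gives ${\rm pp}^N(\overline{a}) = {\rm pp}^M(\overline{a})$, so any $\phi$ that $\mathcal{D}$-generates the pp-type of $\overline{a}$ in $M$ also $\mathcal{D}$-generates it in $N$. Then for any $D \in \mathcal{D}$ and $\overline{b} \in \phi(D)$, strict $\mathcal{D}$-atomicity of $M$ produces $f : M \to D$ with $f\overline{a} = \overline{b}$, and restricting along the pure inclusion gives the required morphism $N \to D$.

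For direct sums: Let $\{M_j\}_{j \in J}$ be strictly $\mathcal{D}$-atomic and set $M = \bigoplus_j M_j$, which lies in $\mathcal{D}$ (definable categories are closed under direct sums) and is $\mathcal{D}$-atomic by \ref{Datds}. Any finite tuple $\overline{a}$ from $M$ has support in some finite $J_0 \subseteq J$, and $\bigoplus_{j \in J_0} M_j$ is a direct summand, hence pure in $M$, so pp-types and pp-generators are the same whether computed in $M$ or in the finite partial sum; any morphism out of $\bigoplus_{j\in J_0}M_j$ extends by $0$ on the remaining summands. This reduces matters to the binary case. For $M_1 \oplus M_2$ take $\overline{a} = (\overline{a}_1,\overline{a}_2)$, let $\phi_i$ $\mathcal{D}$-generate ${\rm pp}^{M_i}(\overline{a}_i)$; by the argument in \ref{Datds} (via \cite[1.2.27]{PreNBK}), $\phi_1 + \phi_2$ $\mathcal{D}$-generates ${\rm pp}^{M_1 \oplus M_2}(\overline{a})$.

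Now suppose $D \in \mathcal{D}$ and $\overline{b} \in (\phi_1 + \phi_2)(D)$. By definition of the sum of pp formulas we can write $\overline{b} = \overline{b}_1 + \overline{b}_2$ with $\overline{b}_i \in \phi_i(D)$. Strict $\mathcal{D}$-atomicity of each $M_i$ yields $f_i : M_i \to D$ with $f_i \overline{a}_i = \overline{b}_i$, and the morphism $f = f_1 \pi_1 + f_2 \pi_2 : M_1 \oplus M_2 \to D$ then sends $\overline{a}$ to $\overline{b}$, completing the proof.

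The only mildly delicate point is the binary direct-sum step: one must invoke that $\phi_1 + \phi_2$ genuinely $\mathcal{D}$-generates the pp-type of $(\overline{a}_1,\overline{a}_2)$ (which the proof of \ref{Datds} already supplies) and remember that ``$\overline{b} \in (\phi_1+\phi_2)(D)$'' means exactly the existence of such a decomposition $\overline{b} = \overline{b}_1 + \overline{b}_2$ inside $D$. Neither point is an obstacle, so no serious difficulty is anticipated.
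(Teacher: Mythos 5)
Your proof is correct and follows essentially the same route as the paper's: for the pure-submodule part, exploit ${\rm pp}^N(\overline{a})={\rm pp}^M(\overline{a})$ and restrict the morphism $M\to D$ furnished by strict $\mathcal{D}$-atomicity of $M$; for direct sums, reduce (as in \ref{Datds}) to the binary case, $\mathcal{D}$-generate ${\rm pp}^{M_1\oplus M_2}(\overline{a}_1,\overline{a}_2)$ by $\phi_1+\phi_2$, decompose $\overline{b}=\overline{b}_1+\overline{b}_2$ and glue the morphisms $f_i:M_i\to D$. The only difference is that you spell out the reduction from arbitrary index sets to the binary case via support and extension by zero, where the paper simply points back to the analogous step in \ref{Datds}.
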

\begin{proof}  Since $N$ is pure in $M$, given any tuple $\overline{a}$ from $N$, we have ${\rm pp}^N(\overline{a})={\rm pp}^M(\overline{a})$, so ${\rm pp}^N(\overline{a}) = \langle \phi\rangle_{\cal D}$ for some pp $\phi$.  Now, if $\overline{b} \in \phi(D)$ for some $D\in {\cal D}$ then, since $M$ is strictly ${\cal D}$-atomic, there is a morphism $f:M \to D$ with $f\overline{a} = \overline{b}$; then we restrict $f$ to $N$.

For the second statement, as in \ref{Datds} it is enough to show that the direct sum of two strictly ${\cal D}$-atomic modules is strictly ${\cal D}$-atomic.

So suppose that $D_1, D_2$ are strictly ${\cal D}$-atomic.  As in the proof of \ref{Datds}, let $\overline{b} = (\overline{b}_1, \overline{b}_2)$ be a tuple in $D_1\oplus D_2$, with $\overline{b}_i \in D_i$ and $\phi_i$ a ${\cal D}$-generator for the pp-type of $\overline{b}_i$ in $D_i$, equally in $D_1 \oplus D_2$.  Then $\phi_1 + \phi_2$ is a ${\cal D}$-generator for the pp-type of $\overline{b}$ in $D_1 \oplus D_2$ (proof of \cite[1.2.17]{PreNBK}).   Suppose that $D\in {\cal D}$, and that $\overline{b} \in (\phi_1 + \phi_2)(D)$.  Then there are $\overline{b}_1 \in \phi_1(D)$ and $\overline{b}_2 \in \phi_2(D)$ with $\overline{b}_1 + \overline{b}_2 = \overline{b}$.  Since $D_i$ is strictly ${\cal D}$-atomic, there are $f_i:D_i \to D$ with $f_i\overline{a}_i = \overline{b}_i$, $i=1,2$; these combine to give $f=(f_1, f_2): D_1 \oplus D_2 \to D$ with $f\overline{a} = \overline{b}$, as required.
\end{proof}

We show that, for any definable subcategory ${\cal D}$, the strictly ${\cal D}$-atomic modules coincide with the strict ${\cal D}$-stationary modules in ${\cal D}$, defined in \cite[\S 8, esp.~8.11]{AngHerML}.  Let $D\in {\rm Mod}\mbox{-}R$; a module $M$ is said to be {\bf strict} $D$-{\bf stationary} if for every tuple $\overline{a}$ from $M$, there is $C\in {\rm mod}\mbox{-}R$, a matching tuple $\overline{c}$ from $C$ such that 

\noindent (i) there is a morphism $h:C \to M$ with $h\overline{c} = \overline{a}$ and 

\noindent (ii) for every matching tuple $\overline{d}$ from $D$, there is a morphism $f:C \to D$ with $f\overline{c} =\overline{d}$ iff there is a morphism $g:M \to D$ with $g\overline{a} = \overline(d)$.

\noindent More generally, if ${\cal D}$ is a subclass of ${\rm Mod}\mbox{-}R$, say that $M$ is {\bf strict} ${\cal D}$-{\bf stationary} if $M$ is strict $D$-stationary for every $D\in {\cal D}$.

\begin{prop}\label{streqstr} Suppose that ${\cal D}$ is a definable subcategory of ${\rm Mod}\mbox{-}R$ and let $M\in {\cal D}$.  Then $M$ is strictly ${\cal D}$-atomic if and only if $M$ is strict ${\cal D}$-stationary.
\end{prop}
\begin{proof}. ($\Rightarrow$) Take $\overline{a}$ from $M$; by assumption the pp-type of $\overline{a}$ in $M$ is ${\cal D}$-generated by some pp formula $\phi$.  Let $(C, \overline{c})$ be a free realisation of $\phi$.  Then there is a morphism $h:C \to M$ taking $\overline{c}$ to $\overline{a}$.  Any morphism $(M,\overline{a})$ to $(D,\overline{d})$ with $D\in {\cal D}$ yields, by composition with $h$, a morphism $(C,\overline{c})$ to $(D,\overline{d})$.  And, in the other direction, given $f:C \to D\in {\cal D}$, the image $\overline{d} = f\overline{c}$ is in $\phi(D)$ so, by assumption on $M$, there is $g:M\to D$ taking $\overline{a}$ to $\overline{d}$, as required.

($\Leftarrow$) Suppose that $M$ is strict ${\cal D}$-stationary and let $\overline{a}$ be a tuple from $M$.  Let $(C, \overline{c})$ be as in the definition of strict stationarity and let $\phi$ be a generator of the pp-type of $\overline{c}$ in $C$.  Since there is a morphism from $C$ to $M$ taking $\overline{c}$ to $\overline{a}$, certainly $\phi \in {\rm pp}^M(\overline{a})$.  We show that $\phi$ ${\cal D}$-generates ${\rm pp}^M(\overline{a})$.  That is, we show that if $\overline{d} \in \phi(D) \in {\cal D}$, then ${\rm pp}^D(\overline{d}) \supseteq {\rm pp}^M(\overline{a})$.  If $\overline{d} \in \phi(D)$ then, since $(C, \overline{c})$ freely realises $\phi$, there is a morphism $f:C \to D$ taking $\overline{c}$ to $\overline{d}$.  So, by assumption, there is $g:M\to D$ with $g\overline{a} =\overline{d}$ and so, indeed, ${\rm pp}^D(\overline{d}) \supseteq {\rm pp}^M(\overline{a})$.  That argument shows that $M$ is ${\cal D}$-atomic and, at the same time, that $M$ is strictly ${\cal D}$-atomic.
\end{proof}

We note next that there is an internal-to-${\cal D}$ category-theoretic characterisation of strictly ${\cal D}$-atomic modules.  Reduced products are directed colimits of products (e.g.~see \cite[\S 3.3.1]{PreNBK}) so, since definable categories have products and directed colimits they have reduced products.

\begin{theorem} \label{charsML} \marginpar{charsML}  Let ${\cal D}$ be a definable category.  A module $M\in {\cal D}$ is strictly ${\cal D}$-atomic iff there is an index set $\Lambda$ and filter ${\cal F}$ on $\Lambda$ such that, whenever $\pi:P\to M$ is a pure epimorphism with $P\in {\cal D}$, there are morphisms $f_\lambda:M \to P$ ($\lambda \in \Lambda$) such that, if $\pi^\ast = \pi^\Lambda/{\cal F} :P^\ast = P^\Lambda/{\cal F} \to M^\ast = M^\Lambda/{\cal F}$ denotes the corresponding reduced product, the morphism $f:M^\ast \to P^\ast$ which is $(f_\lambda)_\lambda/{\cal F}$ satisfies $\pi^\ast f \Delta_M = \Delta_M$, where $\Delta_M:M\to M^\ast$ is the diagonal embedding.
\end{theorem}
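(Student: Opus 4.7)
The plan is to recognise that the equation $\pi^* f \Delta_M = \Delta_M$ in the reduced product is a compact way of asserting that, for every finite tuple $\overline{a}$ of $M$, some member of the family $(f_\lambda)_{\lambda}$ serves as a local section of $\pi$ on $\overline{a}$: unwinding the diagonal equation at $\overline{a}$ amounts to saying $\{\lambda : \pi f_\lambda \overline{a} = \overline{a}\} \in {\cal F}$, and conversely the family of such sets being in ${\cal F}$ forces the equation. Thus the theorem essentially reformulates locally ${\cal D}$-pure-projectivity by packaging the local sections simultaneously, via a single and uniformly chosen pair $(\Lambda, {\cal F})$.

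For $(\Rightarrow)$, I take $\Lambda$ to be the collection of finite tuples from $M$, and let ${\cal F}$ be the filter on $\Lambda$ generated by the sets $U_{\overline{a}} = \{\lambda \in \Lambda : \overline{a} \mbox{ is a subtuple of } \lambda\}$, indexed by finite tuples $\overline{a}$ from $M$; this filter is proper since any finite intersection $U_{\overline{a}_1} \cap \cdots \cap U_{\overline{a}_k}$ contains the concatenation $(\overline{a}_1, \dots, \overline{a}_k)$. Given a pure epi $\pi : P \to M$ with $P \in {\cal D}$, for each $\lambda$ I use ${\cal D}$-atomicity of $M$ to pick a pp formula $\phi_\lambda$ that ${\cal D}$-generates ${\rm pp}^M(\lambda)$, then \ref{pureepichar} to find $\overline{b}_\lambda \in \phi_\lambda(P)$ with $\pi \overline{b}_\lambda = \lambda$, and finally the lifting clause of the definition of strictly ${\cal D}$-atomic (applied with target $P \in {\cal D}$) to produce $f_\lambda : M \to P$ with $f_\lambda \lambda = \overline{b}_\lambda$. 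Then $\pi f_\lambda \lambda = \lambda$ for every $\lambda$, so for any tuple $\overline{a}$ the set $\{\lambda : \pi f_\lambda \overline{a} = \overline{a}\}$ contains $U_{\overline{a}}$ and hence lies in ${\cal F}$, giving $\pi^* f \Delta_M = \Delta_M$.

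For $(\Leftarrow)$, given any pure epi $\pi : P \to M$ with $P \in {\cal D}$ and a family $(f_\lambda)_\lambda$ as guaranteed, every tuple $\overline{a}$ satisfies $\{\lambda : \pi f_\lambda \overline{a} = \overline{a}\} \in {\cal F}$ and hence is non-empty, so some $f_\lambda$ is a local section for $\overline{a}$; thus $M$ is locally ${\cal D}$-pure-projective, and I then invoke the equivalence with strictly ${\cal D}$-atomic (the forthcoming \ref{DateqDpp}) to conclude. The main obstacle is this last invocation: the substantive content of the backward direction is the passage from having local sections against every pure epi in ${\cal D}$ to the lifting clause against every $\overline{b} \in \phi(D)$, which requires building an explicit pure epi to $M$ as a coproduct of ${\cal D}$-preenvelopes of free realisations of pp formulas satisfied in $M$, and then applying Lemma \ref{Denvpp} to extract from a local section a ${\cal D}$-finite generator of ${\rm pp}^M(\overline{a})$ and a witness for the lifting property. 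The reduced-product repackaging itself is a straightforward bookkeeping device.
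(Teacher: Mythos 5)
Your proof is correct and follows essentially the same route as the paper: the same choice of $\Lambda$ as the finite tuples of $M$, the same filter generated by the sets $\langle\overline{a}\rangle$, the same reduction of the forward direction to producing local sections via strict ${\cal D}$-atomicity, and the same reduction of the backward direction to local ${\cal D}$-pure-projectivity followed by the (forward-referenced) equivalence \ref{DateqDpp}. If anything your version is a little cleaner than the paper's, since you use the straightforward reduced-product morphism $(f_\lambda)_\lambda/{\cal F}$ as in the statement rather than the paper's conditionally-defined $f$, and you explicitly note both the properness of the filter and the forward dependence on \ref{DateqDpp}.
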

\begin{proof} Suppose that $M$ is strictly ${\cal D}$-atomic.

Let $\Lambda$ be the set of finite subsets, which we write as tuples, of $M$.  Consider the filter-base consisting of the sets of the form $\langle \overline{a} \rangle = \{ \overline{b}: \overline{a} \subseteq \overline{b}\}$ and let ${\cal F}$ be any filter containing this filter-base.  Denote by $\Delta_P:P\to P^\ast$ and $\Delta_M:M \to M^\ast$ the canonical (pure) embeddings into the corresponding reduced products.

Now, given any pure epimorphism $\pi:P\to M$ with $P \in {\cal D}$, choose, for each $\overline{a} \in \Lambda$, some local splitting $f_{\overline{a}}:M\to P$ such that $\pi f_{\overline{a}} (\overline{a}) =\overline{a}$.

Define $f:M^\ast \to P^\ast$ by $c^\ast= (c_{\overline{a}})_{\overline{a}}/{\cal F} \mapsto (d_{\overline{a}})_{\overline{a}}/{\cal F}$ where $d_{\overline{a}} = f_{\overline{a}}(c_{\overline{a}})$ if $c_{\overline{a}} \in \langle \overline{a} \rangle$ and $d_{\overline{a}} =0$ otherwise.  

Note that $f$ is well-defined since, if $(c_{\overline{a}})_{\overline{a}}/{\cal F} = (b_{\overline{a}})_{\overline{a}}/{\cal F}$, then $\{ \overline{a}: c_{\overline{a}}=b_{\overline{a}}\} \in {\cal F}$ and hence $\{ \overline{a}: fc_{\overline{a}}=fb_{\overline{a}}\} \supseteq \{ \overline{a}: c_{\overline{a}}=b_{\overline{a}}\}$ also is in ${\cal F}$, as required.

We show that $\pi^\ast f \Delta_M = \Delta_M$.  So take $c\in M$.  Then $\pi^\ast f \Delta_M(c) = (\pi f_{\overline{a}} (c))_{\overline{a}}/{\cal F}$.  By construction of ${\cal F}$, we have  $\langle c \rangle \in {\cal F}$ and, if $\overline{a} \in \langle c\rangle$, that is, if $c\in \overline{a}$, then $\pi f_{\overline{a}}(c) =c$.  Therefore $\pi^\ast f \Delta_M = \Delta_M$, as required.

For the converse, if $M$ satisfies this condition, then let $\pi:P \to M$ be a pure epimorphism and let $f_\lambda:M\to P$ ($\lambda \in \Lambda$) be morphisms as described.  Let $\overline{a}$ be a finite subset of $M$.  By assumption the morphism $f= (f_\lambda)_\lambda/{\cal F}:M^\ast \to P^\ast$ satisfies $(\pi^\ast f) (\overline{a})_\lambda/{\cal F} = (\overline{a})_\lambda/{\cal F}$.  In particular there is some (indeed there are many) $\lambda$ with $\pi f_\lambda \overline{a} = \overline{a}$, showing that $M$ is locally ${\cal D}$-pure-projective hence (\ref{DateqDpp} below) strictly ${\cal D}$-atomic.
\end{proof}

We denote by $(-)^\ast$ the hom-dual of a module taken with respect to an injective cogenerator for the category of modules over some chosen subring of its endomorphism ring; we will suppose where needed that the injective cogenerator is minimal or at least that each of its indecomposable direct summands is the injective hull of a simple module.  So $M^\ast$ could be ${\rm Hom}_k(M,k)$ if $k$ is a field and $R$ is a $k$-algebra, it could be ${\rm Hom}_{\mathbb Z}(M, {\mathbb Q}/{\mathbb Z})$, or ${\rm Hom}_S(M,E)$ where $S={\rm End}(M_R)$ and $E$ is a minimal injective cogenerator of $S\mbox{-}{\rm Mod}$.

\begin{lemma}\label{fgsub} \marginpar{fgsub} (\cite[2.12-14]{RotSML}) If ${\cal D}$ is a definable category, $M\in {\cal D}$ is strictly ${\cal D}$-atomic and $S={\rm End}(M)$, then every finitely generated $S$-submodule of $M$ is pp-definable.
\end{lemma}
\begin{proof}  If $a\in M$, take $\phi$ which ${\cal D}$-generates ${\rm pp}^M(a)$.  Then $Sa = \phi(M)$:  the containment $Sa \leq \phi(M)$ since morphisms preserve pp formulas and the converse because, if $b\in \phi(M)$ then, since $M$ is strictly ${\cal D}$-atomic, there is $f\in S$ with $fa=b$. More generally, if we have $a_1, \dots, a_n \in M$, and take $\phi_i$ to ${\cal D}$-generate ${\rm pp}^M(a_i)$, then $\sum_{i=1}^n \, Sa_i = \sum_{i=1}^n \, \phi_i(M) = \phi'(M)$, where $\phi'$ is the pp formula $\sum_{i=1}^n \, \phi_i$.
\end{proof}

A pp-type is said to be {\bf neg-isolated} by a pp formula $\phi$ if it is maximal among pp-types with respect to not containing $\phi$.  Any such pp-type is irreducible, so is realised in an indecomposable pure-injective.  More generally, if ${\cal D}$ is a definable subcategory, then a pp-type $p$ is said to be ${\cal D}$-{\bf neg-isolated} (or neg-isolated with respect to ${\cal D}$) if there is a pp formula $\phi$ such that $p$ is maximal among pp-types of $n$-tuples of elements in modules in ${\cal D}$ with respect to not containing $\phi$.  Again, any such pp-type is irreducible, hence realised in an indecomposable pure-injective in ${\cal D}$.  See Section \ref{secmodth}.

\begin{theorem}\label{dualnegisol} \marginpar{dualnegisol}  Suppose that ${\cal D}$ is a definable subcategory of ${\rm Mod}\mbox{-}R$ and $M \in {\cal D}$ is strictly ${\cal D}$-atomic.  Set $S={\rm End}(M)$ and let $M^\ast = {\rm Hom}_S(_SM,\, _SE)$ where $_SE$ is a minimal injective cogenerator of $S\mbox{-}{\rm Mod}$.  Then every indecomposable direct summand of $M^\ast$ is neg-isolated with respect to the definable subcategory generated by $M^\ast$.
\end{theorem}
\begin{proof}  Let $f\in M^\ast$ and set $p= {\rm pp}^{M^\ast}(f)$.  We use that $M^\ast \models \phi(f)$ iff $D\phi(M) \leq {\rm ker}(f)$ (\cite[1.5]{PRZ2}, \cite[\S 2(c)]{Z-HZPSS}, see \cite[1.3.12]{PreNBK}), that is, $\phi\in p$ iff $fD\phi(M)=0$.

Suppose that $p$ is irreducible so, see Section \ref{secmodth}, the hull of $f$ in $M^\ast$ is a typical indecomposable summand of $M^\ast$.  We show that $M/{\rm ker}(f)$ is a uniform $S$-module.  

Suppose that $a,b\in M \setminus {\rm ker}(f)$.  Since $M$ is ${\cal D}$-atomic, there is a pp formula $\psi_1$ such that ${\rm pp}^M(a) = \langle \psi_1 \rangle_{\cal D}$; since $a \notin {\rm ker}(f)$,  $D\psi_1 \notin p$ (recall that $D^2$ is the identity on pp formulas).  Similarly, there is a pp formula $\psi_2$ such that ${\rm pp}^M(b)$ is ${\cal D}$-generated by $\psi_2$ and so with $D\psi_2 \notin p$.  Since $p$ is irreducible there is, see \cite[\S 4.3.6]{PreNBK} (Ziegler's Criterion), a pp formula $\phi \in p$ such that $(\phi \wedge D\psi_1) + (\phi \wedge D\psi_2) \notin p$.  Therefore the solution set in $M$ of the dual pp formula $(D\phi + \psi_1) \wedge (D\phi + \psi_2)$ is not contained in ${\rm ker}(f)$.  Therefore, since $D\phi(M) \leq {\rm ker}(f)$, we have that $f\psi_1(M) \cap f\psi_2(M) \neq 0$.  That is, by \ref{fgsub}, $fSa \cap fSb \neq 0$  and so the images of $Sa$ and $Sb$ under $f$ have non-zero intersection, showing that $M/{\rm ker}(f)$ is indeed a uniform $S$-module.

Therefore $fM \simeq M/{\rm ker}(f)$ is contained in an indecomposable direct summand $E'$ of $E$.  By choice of $E$, $E'$ has a non-zero simple $S$-submodule, which necessarily lies in the image of $f$, so let $a\in M$ be such that $fa$ generates that simple module.  By assumption, there is a pp formula $\psi$ which ${\cal D}$-generates ${\rm pp}^M(a)$ and, by \ref{fgsub}, $Sa=\psi(M)$, so the simple submodule of $E'$ is $f\psi(M)$.  We claim that $p$ is neg-isolated, for the definable subcategory $\langle M^\ast\rangle$ generated by $M^\ast$, by $D\psi$.

To see that, suppose that $q$ is a pp-type for $\langle M^\ast\rangle$ (that is, such that $\phi \in q$ and $\phi \leq_{M^\ast} \psi$ implies $\psi \in q$), strictly containing $p$; say $\eta \in q \setminus p$.  Then $D\eta(M)$ is not contained in ${\rm ker}(f)$ and so $fa \in fD\eta(M)$ (since $fSa$ is the unique minimal $S$-submodule of $fM$).  Therefore $a=b+c$ for some $b\in D\eta(M)$ and $c\in {\rm ker}(f)$.  Choose $\phi$ to ${\cal D}$-generate ${\rm pp}^M(c)$; so $a \in D\eta(M) + \phi(M)$.  Since, by \ref{fgsub}, $Sa = \psi(M)$, we deduce that $\psi(M) \leq D\eta(M) + \phi(M)$.  Hence, by elementary duality, $\eta (M^\ast) \cap D\phi(M^\ast) \leq D\psi(M^\ast)$, that is $\eta \wedge D\phi \leq_{M^\ast} D\psi$.  Since $\phi(M) =Sc \leq {\rm ker}(f)$, we have $D\phi \in p \subseteq q$, hence $\eta \wedge D\phi \in q$ (pp-types are closed under $\wedge$).  Therefore, since $\eta \wedge D\phi \leq_{M^\ast} D\psi$, $D\psi \in q$, as required.
\end{proof}

Note the special case ${\cal D} = {\rm Mod}\mbox{-}R$ and so the module finitely presented.

\begin{cor}\label{dualnegisolfp} \marginpar{dualnegisolfp}  If $A$ is a finitely presented $R$-module, $S$ is its endomorphism ring and $E$ is a minimal injective cogenerator of $S\mbox{-}{\rm Mod}$, then every indecomposable direct summand of the dual $A^\ast = {\rm Hom}_S(A,E)$ is ${\cal D}$-neg-isolated where ${\cal D}$ is the definable category generated by $A^\ast$.
\end{cor}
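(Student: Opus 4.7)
The plan is to reduce directly to Theorem \ref{dualnegisol} with ${\cal D} = {\rm Mod}\mbox{-}R$. First I would verify that any finitely presented module $A$ is strictly ${\rm Mod}\mbox{-}R$-atomic. By \cite[1.2.6]{PreNBK} every pp-type realised in $A$ is finitely generated, so $A$ is ${\rm Mod}\mbox{-}R$-atomic. Moreover, given $\overline{a}$ from $A$ with ${\rm pp}^A(\overline{a}) = \langle \phi \rangle$, the pair $(A, \overline{a})$ is itself a free realisation of $\phi$, so for any $N \in {\rm Mod}\mbox{-}R$ and any $\overline{b} \in \phi(N)$ there is, by \cite[1.2.17]{PreNBK}, a morphism $f:A \to N$ with $f\overline{a} = \overline{b}$. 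That is precisely the strict atomicity requirement in the ambient definable category ${\rm Mod}\mbox{-}R$.

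Next I would apply \ref{dualnegisol} with $M = A$ and this ambient ${\cal D} = {\rm Mod}\mbox{-}R$. The conclusion is that every indecomposable direct summand of $A^\ast$ is neg-isolated with respect to the theory of $A^\ast$. To match this to the statement of the corollary, I would then unpack that ``the theory of $A^\ast$'' agrees with ``the theory of the definable category generated by $A^\ast$'': the latter is by definition the smallest definable subcategory of $R\mbox{-}{\rm Mod}$ containing $A^\ast$, and so the pp-pairs holding in $A^\ast$ coincide with those holding on every member of ${\cal D}$. Consequently the inequality $\leq_{\cal D}$ between pp formulas reduces to the inequality holding in $A^\ast$, and ${\cal D}$-neg-isolation is the same notion as neg-isolation with respect to the theory of $A^\ast$.

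There is really no obstacle here: everything is a terminological unpacking of the previous theorem once strict atomicity of $A$ over ${\rm Mod}\mbox{-}R$ is observed. If one wanted to be even more direct, one could point out that finitely presented modules are classically strictly Mittag-Leffler (take $f=g=1_A$ in the definition of strictly ML), and invoke that strictly ${\rm Mod}\mbox{-}R$-atomic coincides with strictly Mittag-Leffler in the non-relative sense.
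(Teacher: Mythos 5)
Your proof is correct and takes the same route the paper intends: the corollary is stated immediately after the remark ``Note the special case ${\cal D} = {\rm Mod}\mbox{-}R$,'' and your argument simply writes out that specialisation, verifying via \cite[1.2.6]{PreNBK} and \cite[1.2.17]{PreNBK} that a finitely presented $A$ is strictly ${\rm Mod}\mbox{-}R$-atomic and then unpacking why ``neg-isolated with respect to the theory of $A^\ast$'' is the same as ``$\langle A^\ast\rangle$-neg-isolated.'' Nothing more is needed.
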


We already know, by \cite[3.5]{MehPre}, that the dual module $M^\ast$ has `enough' neg-isolated, in particular indecomposable, direct summands.  The theorem \ref{dualnegisol} above says that, for a strictly ${\cal D}$-atomic module, and for the specific type of duality chosen above, {\em every} indecomposable direct summand is neg-isolated.  There can however, as the following example illustrates, be superdecomposable direct summands of the dual of a finitely presented module, where a module is said to be {\bf superdecomposable} if it is nonzero and has no indecomposable direct summand.

\begin{example}  Let $R$ be a simple, non-artinian, von Neumann regular ring; the regularity condition implies that every embedding between $R$-modules is pure and hence that the pure-injectives are exactly the injectives.  The module (left or right) $R$ has no uniform submodules (see \cite[7.3.19]{PreNBK}) so its injective hull is superdecomposable.

Consider the left module $_RR$; this is strictly atomic for the whole category $R\mbox{-}{\rm Mod}$ so, noting that ${\rm End}(_RR) =R$, consider the right module $(_RR)^\ast = {\rm Hom}(R_R, E(R_R))$.  The canonical embedding $f:R_R \to E(R_R)$ is an element of this dual module and it generates a copy of $R_R$.  Thus $R_R$ embeds, purely since $R$ is regular, in $(_RR)^\ast$, hence the superdecomposable (pure-)injective $E(R_R)$ is a direct summand of $(_RR)^\ast$, as required.
\end{example}

\subsection{Strictly atomic generators}\label{secsatgen} \marginpar{secsatgen}

Makkai \cite{Makk} proves a remarkably strong result, a special case of which we state now.  In fact, this statement reflects some of his proof, not simply his formally-stated conclusion(s).

\begin{theorem}\label{Mak} \marginpar{Mak} (\cite[4.4, 4.3]{Makk}, see also \cite[4.1]{MakkTop}) 

\noindent (a)  Let ${\cal D}$ be a definable category.  Then there is a $\varinjlim$-generating set of strictly ${\cal D}$-atomic modules in ${\cal D}$.

\noindent (b)  Suppose that ${\cal D}$ is a definable subcategory of ${\rm Mod}\mbox{-}R$ and let $A\in {\rm mod}\mbox{-}R$ be any finitely presented $R$-module.  Then there is a ${\cal D}$-preenvelope $A\to D_A$ where $D_A$ is strictly ${\cal D}$-atomic.
\end{theorem}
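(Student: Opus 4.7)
The plan is to prove (b) first by a transfinite construction, then derive (a) by exhibiting a $\varinjlim$-generating set from the preenvelopes produced in (b).

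For (b), start with $f_0 : A \to D_0$, a ${\cal D}$-preenvelope of $A$; by \ref{Denvpp} the image of a generating tuple $\overline{a}_0$ of $A$ already has ${\cal D}$-finitely generated pp-type in $D_0$. But $D_0$ need not be strictly ${\cal D}$-atomic, so I would build a tower $D_0 \to D_1 \to \cdots \to D_\alpha \to \cdots$ in ${\cal D}$ addressing two obstructions at each successor stage. An obstruction of the first kind is a tuple $\overline{a}$ in $D_\alpha$ whose pp-type is not ${\cal D}$-generated by any single pp formula; an obstruction of the second kind is a tuple $\overline{a}$ in $D_\alpha$ whose pp-type is ${\cal D}$-generated by some $\phi$, together with a realisation $\overline{b}\in \phi(D')$ in some $D'\in {\cal D}$ not in the image of any morphism $D_\alpha \to D'$ sending $\overline{a}$ to $\overline{b}$. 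To resolve the second kind of obstruction, form the pushout of $D_\alpha \leftarrow (A_\phi, \overline{c}_\phi) \to D'$ along a free realisation $(A_\phi, \overline{c}_\phi)$ of $\phi$ (sending $\overline{c}_\phi \mapsto \overline{a}$ and $\overline{c}_\phi \mapsto \overline{b}$ respectively) and take a ${\cal D}$-preenvelope of the pushout; the identified image of $\overline{a}$ and $\overline{b}$ in the resulting module witnesses the required morphism. To resolve the first kind of obstruction, push out along maps from suitable free realisations so as to force new pp formulas into the pp-type of $\overline{a}$, again ending with a ${\cal D}$-preenvelope to remain in ${\cal D}$. At limit stages take direct limits, which stay in ${\cal D}$.

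Termination rests on two observations: the pp-type of any fixed tuple can only grow along the tower and is bounded by the set of pp formulas modulo ${\cal D}$-equivalence; and the number of obstructions at each stage is set-bounded. A cardinal bound then forces the construction to stabilise at some ordinal $\lambda$; set $D_A = D_\lambda$. Since each $D_\beta$ is in ${\cal D}$ and the composite $A \to D_0 \to D_A$ still satisfies the preenvelope property (every $A \to D \in {\cal D}$ factors through $D_0$, hence through $D_A$), this is the desired strictly ${\cal D}$-atomic ${\cal D}$-preenvelope. The hardest part is the first kind of obstruction: one must verify that suitable pushouts genuinely push the pp-type of $\overline{a}$ toward a ${\cal D}$-finitely generated shape, and that one can bookkeep across the growing set of new tuples introduced at each stage without the process running away. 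The key inputs are \ref{puremonopo} (closure of ${\cal D}$ under pushouts of pure monomorphisms), \ref{pureepichar} (to control when induced morphisms are pure epimorphisms), and arguments of the flavour used in \ref{dualnegisol} to identify a limiting pp formula $\phi$ that ${\cal D}$-generates the stabilised pp-type.

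For (a), given any $D \in {\cal D}$, write it as a pure-epimorphic image $\bigoplus_i A_i \to D$ of finitely presented modules, cf.\ \cite[2.1.25]{PreNBK}. Each component $A_i \to D$ factors through a strictly ${\cal D}$-atomic ${\cal D}$-preenvelope $A_i \to D_{A_i}$ from (b) as $A_i \to D_{A_i} \to D$, and by \ref{dsstrDat} the direct sum $\bigoplus_i D_{A_i}$ is strictly ${\cal D}$-atomic. An argument as in the proof of part (1) of the proposition following \ref{Dpureepi} shows that the induced morphism $\bigoplus_i D_{A_i} \to D$ is a pure epimorphism, and a standard colimit presentation then exhibits $D$ as a directed colimit of strictly ${\cal D}$-atomic modules drawn from a representative set of $\{D_A : A \in {\rm mod}\mbox{-}R\}$. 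This set is the required $\varinjlim$-generating set.
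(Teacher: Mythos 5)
Your derivation of (a) from (b) is sound, and essentially agrees with what the paper does (see the remark after \ref{Mak} and Lemma \ref{pureepi}). The problem is in (b).

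The tower/pushout approach has a genuine gap in the treatment of what you call the first kind of obstruction: tuples whose pp-type is not ${\cal D}$-finitely generated. Pushing out (and then taking a ${\cal D}$-preenvelope) can only \emph{enlarge} the pp-type of the image of $\overline{a}$, since morphisms are non-decreasing on pp-types. But enlarging a filter of pp formulas does not move it toward being principal: a filter can be strictly increasing, or can stabilise at a maximal-among-realisable filter, without ever becoming of the form $\langle\phi\rangle_{\cal D}$. There is nothing in the proposal that forces the stabilised pp-type to acquire a generator; you would need to produce a specific $\phi$ already in the pp-type which implies, modulo ${\cal D}$, every formula that could ever enter, and the pushout mechanism offers no way to identify such a $\phi$. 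By contrast, the ``second kind'' obstruction (the free-realisation property) genuinely is a lifting problem, and pushing out against a free realisation of the generating formula $\phi$ is the right move there -- but that move presupposes that $\phi$ already exists, which is precisely what the first kind of obstruction denies you.

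The termination argument is also incomplete, and you flag this yourself but do not resolve it. The observation that the pp-type of a fixed tuple can only grow and is bounded in the set of pp formulas modulo ${\cal D}$-equivalence controls each individual tuple, but each successor stage introduces new elements, hence new tuples, hence new obstructions, so the class of things to be controlled is not fixed in advance. A cardinality bound on $D_\alpha$ is not automatic (pushouts followed by preenvelopes can increase size unboundedly), and so there is no ordinal $\lambda$ at which you can argue the process has run out of moves. This is the standard obstruction to naive small-object arguments and needs a smallness or bookkeeping device that the proposal does not supply.

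The paper sidesteps both problems by building $D_A$ ``from below'': $D_A$ is a directed colimit of a chain (or, in the uncountable case, a directed system) of \emph{finitely presented} modules $B_n$, each $B_n$ being a free realisation of a conjunction of pp formulas chosen to exhaust $(\theta_m)_{\downarrow{\cal D}}$ for all earlier $\theta_m$. Finite presentation guarantees from the outset that every tuple in every $B_n$ has finitely generated pp-type, and the crucial Lemma \ref{5.4} and Corollary \ref{5.5} show that this ${\cal D}$-finite generation persists to the colimit and, moreover, that any morphism $B_n\to D\in{\cal D}$ extends through the whole chain, which is exactly strict ${\cal D}$-atomicity. The bottom-up construction makes the ${\cal D}$-atomicity automatic rather than something to be forced, which is why it succeeds where the tower does not.
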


\begin{remarks}
In part (a) it is the existence of enough strictly ${\cal D}$-atomic models which is the point.  That one can take a set of them to $\varinjlim$-generate is direct from the Downwards L\"{o}wenheim-Skolem Theorem; or one can use those appearing in part (b).

\vspace{4pt} 

We noted earlier that, if $A$ is in ${\rm mod}\mbox{-}R$, if $\overline{a}$ is a tuple from $A$, and if $f:A\to D_A \in {\cal D}$ is {\em any} ${\cal D}$-preenvelope then the pp-type of $f\overline{a}$ in $D_A$ will be ${\cal D}$-finitely generated, indeed, \ref{Denvpp}, will be ${\cal D}$-generated by any pp formula which generates the pp-type of $\overline{a}$ in $A$.  If $\overline{a}$ generates the module $A$, we can take this pp formula to be quantifier-free (specifying finitely many relations which define the module $\sum_{i=1}^n\, a_iR$).  Clearly these $D_A$, as $A$ ranges over finitely presented $R$-modules, form a $\varinjlim$-generating subset of ${\cal D}$.  And $D_R$ is even a generator in the sense that every $D\in {\cal D}$ is an epimorphic image of a coproduct of copies of $D_R$.  But, though the pp-type of $\overline{a}$ in $D_A$ is finitely generated, there is no reason in general to suppose that {\em every} tuple in $D_A$ has finitely generated pp-type - i.e.~that $D_A$ is ${\cal D}$-atomic, let alone strictly ${\cal D}$-atomic.  Makkai shows that there is, nevertheless, some choice of $A\to D_A$ such that $D_A$ is strictly ${\cal D}$-atomic.
\end{remarks}

Makkai's construction/proof is a model-theoretic Henkin-style construction, originally appearing as \cite[4.1]{MakkTop} and done in great generality in \cite{Makk}.  It is perhaps not easy to extract its core from the surrounding details but, in the case that the ring $R$ is countable, we can give what we hope is a more conceptual and algebraic proof which makes the relation between the inputs and outputs of the construction clearer.  That proof, which was found in discussion with Philipp Rothmaler, is given in the next section.  Here we derive some consequences of the existence of ``enough" strictly atomic models.  We begin by noting that (b) of \ref{Mak} extends to pure-projective $R$-modules.

\begin{cor}  If ${\cal D}$ a definable subcategory of ${\rm Mod}\mbox{-}R$, then every pure-projective $R$-module has a strictly ${\cal D}$-atomic ${\cal D}$-preenvelope.
\end{cor}
\begin{proof} If, for $i\in I$, $A_i$ is finitely presented and $f_i:A_i \to D_i$ is a strictly ${\cal D}$-atomic ${\cal D}$-preenvelope, then $\bigoplus_i \, f_i$ is clearly (reduce to the finite case) a strictly ${\cal D}$-atomic ${\cal D}$-preenvelope for $\bigoplus_i \, A_i$, and hence for any direct summand of $\bigoplus_i \, A_i$, that is, for every pure-projective $R$-module.
\end{proof}

\begin{lemma} \label{pureepi} \marginpar{pureepi}  If ${\cal D}$ is a definable category and $D\in {\cal D}$ then there is a strictly ${\cal D}$-atomic $M\in {\cal D}$ and a pure epimorphism $M\to D$.
\end{lemma}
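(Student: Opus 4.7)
The plan is to mimic the proof of part~(1) of the proposition on ${\cal D}$-pure-projective $\Rightarrow$ ${\cal D}$-atomic, replacing its use of arbitrary ${\cal D}$-preenvelopes of finitely presented modules by the strictly ${\cal D}$-atomic ${\cal D}$-preenvelopes supplied by Makkai's theorem \ref{Mak}(b). The input needed is the familiar fact that every module is a pure-epimorphic image of a direct sum of finitely presented modules.

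Concretely, first choose, by \cite[2.1.25]{PreNBK}, a pure epimorphism $p:\bigoplus_{i\in I}\,A_i \to D$ with each $A_i\in {\rm mod}\mbox{-}R$, and write $p_i:A_i \to D$ for the components. Next, for each $i$, apply \ref{Mak}(b) to obtain a ${\cal D}$-preenvelope $g_i:A_i \to D_i$ with $D_i$ strictly ${\cal D}$-atomic; because $D\in {\cal D}$, the preenveloping property factors $p_i$ as $p_i = f_i g_i$ for some $f_i:D_i \to D$. Set $M=\bigoplus_i \, D_i$, which belongs to ${\cal D}$ and is strictly ${\cal D}$-atomic by \ref{dsstrDat}, and let $f=(f_i)_i:M \to D$ be the assembled morphism.

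It remains to show that $f$ is a pure epimorphism, which I will do via \ref{pureepichar}. Given any tuple $\overline{a}$ from $D$ and pp formula $\phi$ with $\overline{a}\in \phi(D)$, use that $p$ is a pure epimorphism to find $\overline{b}\in \phi(\bigoplus_i\,A_i)$ with $p\overline{b} = \overline{a}$. Setting $\overline{c}=(\bigoplus_i\,g_i)\overline{b}\in M$, we have $\overline{c}\in \phi(M)$ since morphisms preserve satisfaction of pp formulas, and $f\overline{c} = \bigl(\bigoplus_i\,f_i\bigr)\bigl(\bigoplus_i\,g_i\bigr)\overline{b} = p\overline{b} = \overline{a}$, as required.

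There is essentially no obstacle here, since the real work has already been done in establishing \ref{Mak}(b); the only thing one has to check is that the construction is compatible with the pure-epi criterion, which is immediate from the factorisation $f_i g_i = p_i$ and the fact that the $g_i$ assemble to a single morphism $\bigoplus_i\,g_i$ on which pp formulas are monotone.
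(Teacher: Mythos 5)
Your proof is correct and follows exactly the same route as the paper's: pure epimorphism from a direct sum of finitely presented modules, strictly ${\cal D}$-atomic preenvelopes via Makkai's theorem \ref{Mak}(b), direct sum closure via \ref{dsstrDat}, and verification of purity via \ref{pureepichar}. You have simply made explicit the "directly checked" step that the paper leaves to the reader.
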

\begin{proof} There is a pure epimorphism $f:P\to D$ where $P=\bigoplus_i\, A_i$ is a direct sum of finitely presented $R$-modules, see \cite[2.1.25]{PreNBK}.  Each component map from some $A_i$ to $D$ factors through $A_i\to D_{A_i}$ where $D_{A_i}$ is a strictly ${\cal D}$-atomic ${\cal D}$-preenvelope of ${A_i}$.  Take $M$ to be the, strictly ${\cal D}$-atomic by \ref{dsstrDat}, direct sum of these modules $D_{A_i}$; it is direct to check that the corresponding map $M\to D$ is a pure epimorphism.
\end{proof}

In particular, every ${\cal D}$-pure-projective in ${\cal D}$ is a direct summand of a direct sum of strictly ${\cal D}$-atomic ${\cal D}$-preenvelopes of finitely presented modules.

Since every definable category ${\cal D}$ is closed under pure subobjects and since a pure subobject of a strictly atomic object is strictly atomic (\ref{dsstrDat}), we deduce that every object of ${\cal D}$ has a pure presentation by strictly ${\cal D}$-atomic objects.

\begin{cor}  If ${\cal D}$ is a definable category and $D\in {\cal D}$ then there is a pure-exact sequence  $0 \to M_1 \to M_0 \to D \to 0$ with $M_0, M_1$ strictly ${\cal D}$-atomic.
\end{cor}

\begin{rmk}  It follows from \ref{pureepi} and \cite[3.7]{RotSML} that, in the definition of strictly ${\cal D}$-atomic, it is enough to require the ``free realisation" property for single elements (it then follows for finite tuples).
\end{rmk}

We may also deduce the following.

\begin{cor}\label{DateqDpp} \marginpar{DateqDpp}  If ${\cal D}$ is a definable subcategory then $M\in {\cal D}$ is strictly ${\cal D}$-atomic iff $M$ is locally ${\cal D}$-pure-projective.
\end{cor}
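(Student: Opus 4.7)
The plan is to prove both implications directly, using Lemma \ref{pureepi} for the nontrivial direction.

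For ($\Rightarrow$), suppose $M$ is strictly ${\cal D}$-atomic and let $\pi:D\to M$ be a pure epimorphism with $D\in {\cal D}$. Given any tuple $\overline{a}$ from $M$, choose a pp formula $\phi$ which ${\cal D}$-generates ${\rm pp}^M(\overline{a})$. Since $\overline{a}\in \phi(M)$ and $\pi$ is a pure epimorphism, Proposition \ref{pureepichar} supplies $\overline{b}\in \phi(D)$ with $\pi\overline{b}=\overline{a}$. Strict ${\cal D}$-atomicity of $M$ then yields a morphism $g:M\to D$ with $g\overline{a}=\overline{b}$, so $\pi g\overline{a}=\overline{a}$, as required for $\pi$ to locally split.

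For ($\Leftarrow$), suppose $M$ is locally ${\cal D}$-pure-projective. By Lemma \ref{pureepi} there is a strictly ${\cal D}$-atomic $N\in {\cal D}$ and a pure epimorphism $\pi:N\to M$. Given any tuple $\overline{a}$ from $M$, the local splitting hypothesis provides $g:M\to N$ with $\pi g\overline{a}=\overline{a}$. Set $\overline{c}=g\overline{a}$, and let $\phi$ be a pp formula which ${\cal D}$-generates ${\rm pp}^N(\overline{c})$. Since morphisms are non-decreasing on pp-types, $\phi\in {\rm pp}^M(\overline{a})$ (via $\pi$); conversely, if $\psi\in {\rm pp}^M(\overline{a})$ then $\psi\in {\rm pp}^N(g\overline{a})={\rm pp}^N(\overline{c})$ (via $g$), hence $\phi\leq_{\cal D}\psi$. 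Thus ${\rm pp}^M(\overline{a})=\langle \phi\rangle_{\cal D}$, showing in particular that $M$ is ${\cal D}$-atomic.

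To verify the free-realisation property, take $D\in {\cal D}$ and $\overline{b}\in \phi(D)$. Since $N$ is strictly ${\cal D}$-atomic and ${\rm pp}^N(\overline{c})$ is ${\cal D}$-generated by $\phi$, there is a morphism $h:N\to D$ with $h\overline{c}=\overline{b}$. Then $hg:M\to D$ satisfies $hg\overline{a}=h\overline{c}=\overline{b}$, establishing strict ${\cal D}$-atomicity of $M$. The only subtle point is the equality ${\rm pp}^M(\overline{a})=\langle \phi\rangle_{\cal D}$, and this is handled cleanly by pairing the two morphisms $\pi$ and $g$ whose composition fixes $\overline{a}$; there should be no real obstacle.
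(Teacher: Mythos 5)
Your proof is correct and follows essentially the same route as the paper: the forward direction combines Proposition \ref{pureepichar} with the free-realisation property of strict ${\cal D}$-atomicity, and the converse uses Lemma \ref{pureepi} to produce a strictly ${\cal D}$-atomic pure cover and then transfers pp-types through the pair $(\pi, g)$. The only cosmetic difference is that the paper expresses the covering module as a direct sum $\bigoplus_i D_i$ and invokes Lemma \ref{dsstrDat} to see it is strictly ${\cal D}$-atomic, whereas you take the strictly ${\cal D}$-atomic cover directly from the statement of Lemma \ref{pureepi}.
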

\begin{proof} ($\Rightarrow$)  Suppose that $M$ is strictly ${\cal D}$-atomic and $f:D\to M$ is a pure epimorphism in ${\cal D}$.  If $\overline{a}$ is a finite tuple from $M$, let $\phi$ pp be such that ${\rm pp}^M(\overline{a}) = \langle \phi\rangle_{\cal D}$.  By \ref{pureepichar}, there is $\overline{d}$ from $D$ with $f\overline{d} = \overline{a}$ and $\overline{d} \in \phi(D)$.  Since $M$ is strictly ${\cal D}$-atomic, there is $g:M\to D$ with $g\overline{a} = \overline{d}$, as required.

($\Leftarrow$)  By \ref{pureepi} there is a pure epimorphism $f: D' \to M$ in ${\cal D}$ with $D'$ strictly ${\cal D}$-atomic.  Now suppose that $\overline{a}$ is from $M$.  By assumption, there is $g:M \to D'$ such that $fg\overline{a}= \overline{a}$ and hence with ${\rm pp}^M(\overline{a}) = {\rm pp}^{D'}(g\overline{a})$.  Since $D'$ is ${\cal D}$-atomic, the latter pp-type is ${\cal D}$-finitely generated, by $\phi$ say.  Thus $M$ is ${\cal D}$-atomic.  

Now suppose that $D\in {\cal D}$ and $\overline{b} \in \phi(D)$.  By \ref{dsstrDat}, $D'$ is strictly ${\cal D}$-atomic, so there is $h: D' \to M$ with $h.g\overline{a} = \overline{b}$.  Thus we obtain the morphism $hg:M \to D$ with $hg.\overline{a} =\overline{b}$, and so see that $M$ is strictly ${\cal D}$-atomic.
\end{proof}

Note the following.

\begin{lemma}  Suppose that ${\cal D}$ is a definable subcategory of ${\rm Mod}\mbox{-}R$ and $A\in {\rm mod}\mbox{-}R$.  If $A$ has a ${\cal D}$-envelope, $f:A\to D$, then $D$ is strictly ${\cal D}$-atomic (and hence may be taken to be $D_A$).
\end{lemma}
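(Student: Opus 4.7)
The plan is to compare the given envelope $f: A \to D$ with a strictly $\mathcal{D}$-atomic $\mathcal{D}$-preenvelope produced by Theorem \ref{Mak}(b), and to show that $D$ must arise as a direct summand of the latter.

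First I would invoke Theorem \ref{Mak}(b) to obtain a strictly $\mathcal{D}$-atomic $\mathcal{D}$-preenvelope $f_A : A \to D_A$. Since $f: A\to D$ is a (pre)envelope and $D_A \in {\cal D}$, there is a morphism $g: D \to D_A$ with $gf = f_A$; conversely, since $f_A$ is a preenvelope and $D\in {\cal D}$, there is $g': D_A \to D$ with $g' f_A = f$. Composing, $g'g : D \to D$ satisfies $(g'g)f = g' f_A = f$, so the envelope hypothesis on $f$ forces $g'g$ to be an automorphism of $D$.

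Setting $h = (g'g)^{-1} g' : D_A \to D$, we get $h \circ g = 1_D$, so $g$ is a split monomorphism and $D$ is isomorphic to a direct summand of $D_A$. Since $D_A$ is strictly $\mathcal{D}$-atomic, Lemma \ref{dsstrDat} (closure of the strictly $\mathcal{D}$-atomic class under pure submodules, and in particular direct summands) yields that $D$ is strictly $\mathcal{D}$-atomic. The final parenthetical remark is then immediate: $f: A \to D$ is a $\mathcal{D}$-preenvelope into a strictly $\mathcal{D}$-atomic module, so it qualifies as one of the maps $A \to D_A$ promised by \ref{Mak}(b).

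The argument is essentially formal from the definitions of envelope and preenvelope together with Makkai's existence theorem and \ref{dsstrDat}; no serious obstacle arises. The only mild subtlety is being careful that the envelope property forces the particular endomorphism $g'g$ (not just some isomorphism between $D$ and a retract of $D_A$) to be invertible, which is exactly what gives the splitting of $g$.
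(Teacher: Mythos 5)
Your proof is correct and is essentially identical to the paper's: both take a strictly $\mathcal{D}$-atomic preenvelope $A\to D_A$ from \ref{Mak}(b), use the two preenvelope properties to get comparison maps $g: D \to D_A$ and $g': D_A \to D$ with $g'gf = f$, invoke the envelope hypothesis to see $g'g$ is an automorphism so $D$ is a direct summand of $D_A$, and close with \ref{dsstrDat}. The only difference is that you spell out the splitting $h=(g'g)^{-1}g'$ explicitly, which the paper leaves implicit.
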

\begin{proof} Choose some strictly ${\cal D}$-atomic preenvelope $A\to D_A$.  Since each of $D$, $D_A$ is a ${\cal D}$-preenvelope of $A$, there are morphisms $g:D\to D_A$ and $h:D_A \to D$ such that $hgf=f$.  Then $hg$ is an automorphism of $D$ and hence $D$ is a direct summand of $D_A$ so, \ref{dsstrDat}, is strictly ${\cal D}$-atomic.
\end{proof}

If $M$ is a module, $\overline{a} = (a_1, \dots, a_n)$ an $n$-tuple of elements from $M$ and $b\in M$, then we say that $b$ is {\bf definable} by a pp formula (in $M$) {\bf over} $\overline{a}$ if there is a pp formula $\psi(\overline{x},y)$ with $M\models \psi(\overline{a}, b)$ and with $b$ the unique solution in $M$ to $\psi(\overline{a},y)$, equivalently (since $\psi$ is pp, so $\psi(\overline{a},y)$ defines a coset of $\psi(\overline{0},y)$) with $\psi(\overline{0},M) = 0$ (see Section \ref{secmodth}).

One might ask whether, given $A\in {\rm mod}\mbox{-}R$, one may choose a strictly ${\cal D}$-atomic $A\to D_A$ such that every element of $D_A$ is definable over the image of $A$.  The example $R={\mathbb Z}$, $A={\mathbb Z}_2$ and ${\cal D}$ the class of injective ${\mathbb Z}$-modules shows that in general the answer is negative, since $D_A$ clearly has, as a direct summand, ${\mathbb Z}_{2^\infty}$ which has many automorphisms which fix its submodule $A$.  Here is another example, this time with $A=R$.

\begin{example}  Take $k$ any field, $R=k[X,Y]/(X,Y)^2$ and ${\cal D}$ the definable subcategory generated by the injective hull $E(k)$ of the unique simple module $k$ (so ${\cal D} = {\rm Inj}\mbox{-}R$).  The ${\cal D}$-envelope of $R$, which is the minimal choice of $D_A$, is $E(R)=E(k)\oplus E(k)$.  Let $a$ denote the image of $1_R$ in $E(R)$ and consider any element $b\in E(R)$ such that $bY=aX$.  The pp-type of $b$ is generated, modulo the definable category of injective $R$-modules, by the formula $yX=0 (\wedge \,\exists y \,( yY=xX))$ but this is also satisfied by, for instance, any element of the form $b+c$ where $c$ is in the socle of $E(R)$.  (Put more algebraically, there are non-identity automorphisms of $E(R)$ which fix $a$.)
\end{example}

\subsection{Constructing strictly atomic models}\label{secmakctble}

In this subsection we present a proof of \ref{Mak} for countable rings.  The proof was found in a discussion with Philipp Rothmaler.

{\bf We suppose, throughout this subsection, that the ring $R$ is countable}; this implies that there are just countably many (pp) formulas. 

\vspace{4pt}

Let ${\cal D}$ be a definable subcategory of ${\rm Mod}\mbox{-}R$.

Recall from Section \ref{secmodth} that a {\bf pp-pair} - denoted $\phi/\psi$ - is a pair $\phi(\overline{x}) \geq \psi(\overline{x})$ of pp formulas, where the inequality means that $\phi(M) \geq \psi(M)$ for all modules $M$.  Also, every definable category is determined by the set of pp-pairs which are closed on it, where we say that a pp-pair $\phi \geq \psi$ is {\bf closed} on $M$ if $\phi(M) = \psi(M)$  and is {\bf closed} on ${\cal D}$ if it is closed on $M$ for all $M\in {\cal D}$.  

For $\phi$ a pp formula, set $\phi_{\downarrow\cal D} = \{ \psi: \phi \geq \psi \text{ and }\phi/\psi \text{ is closed on } {\cal D}\}$ - a subset of $\langle \phi \rangle_{\cal D}$.

\begin{theorem}  Suppose that ${\cal D}$ is a definable subcategory of the module category ${\rm Mod}\mbox{-}R$ where $R$ is countable.  Let $A$ be a finitely presented $R$-module.  Then there is a ${\cal D}$-preenvelope $A\to D_A$ where $D_A$ is strictly ${\cal D}$-atomic.
\end{theorem}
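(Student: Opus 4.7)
The plan is a Henkin-style atomic model construction, exploiting that countability of $R$ makes the pp-language countable.

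\textbf{Reduction.} First note that a countably generated ${\cal D}$-atomic module $M$ is automatically strictly ${\cal D}$-atomic. Indeed, part~(2) of the earlier Proposition gives that $M$ is ${\cal D}$-pure-projective, and applying the construction of the proof of part~(1) to this pure-projective $M$ produces a splitting $h\colon M\to\bigoplus_i D_{A_i}$ of a pure epimorphism $\bigoplus_i D_{A_i}\twoheadrightarrow M$, where the $D_{A_i}$ are ${\cal D}$-preenvelopes of finitely presented modules $A_i$; moreover, for any tuple $\overline{c}$ in $M$ one finds a tuple $\overline{a}$ in $\bigoplus_i A_i$ with $g\overline{a}=h\overline{c}$ (where $g$ is the combined preenvelope) such that $(\bigoplus_i A_i,\overline{a})$ is a free realisation of a pp formula $\phi$ ${\cal D}$-generating ${\rm pp}^M(\overline{c})$. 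Given any $\overline{b}\in\phi(D)$ with $D\in{\cal D}$, free realisation provides a morphism $\bigoplus_i A_i\to D$ sending $\overline{a}\mapsto\overline{b}$; this factors through $\bigoplus_i D_{A_i}$ by the ${\cal D}$-preenvelope property, and composing with $h$ yields a morphism $M\to D$ sending $\overline{c}\mapsto\overline{b}$, as required for strict ${\cal D}$-atomicity. It therefore suffices to construct a countable ${\cal D}$-atomic $D_A\in{\cal D}$ together with a ${\cal D}$-preenvelope $A\to D_A$.

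\textbf{Construction.} Since $R$ is countable there are only countably many pp formulas $\phi_0,\phi_1,\ldots$; for each $\phi$ fix, once and for all, a countable ${\cal D}$-preenvelope $h_\phi\colon C_\phi\to D_\phi^*$ of a free realisation $(C_\phi,\overline{c}_\phi)$ of $\phi$ in $\mathrm{mod}\mbox{-}R$, so that $h_\phi\overline{c}_\phi$ has pp-type $\langle\phi\rangle_{\cal D}$ in $D_\phi^*$ by \ref{Denvpp}. Inductively build a countable chain $D_0\to D_1\to\cdots$ of countable modules in ${\cal D}$, with $D_0$ a countable ${\cal D}$-preenvelope of $A$ (obtained by downward L\"owenheim--Skolem applied to any ${\cal D}$-preenvelope of $A$). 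At stage $n+1$ address one item from a diagonal enumeration of pairs $(\overline{b},\phi)$, where $\overline{b}$ is a finite tuple appearing in some $D_m$ and $\phi\in{\rm pp}^{D_m}(\overline{b})$: form the pushout $P_n$ of $D_\phi^*\xleftarrow{h_\phi}C_\phi\to D_n$ (the rightward arrow sending $\overline{c}_\phi\mapsto\overline{b}$), and let $D_{n+1}$ be a countable ${\cal D}$-preenvelope of $P_n$. Set $D_A:=\varinjlim_n D_n$; this is countable and lies in ${\cal D}$ by closure of definable categories under directed colimits.

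\textbf{Verification and main obstacle.} The ${\cal D}$-preenvelope property of $A\to D_A$ follows from strict ${\cal D}$-atomicity (from the reduction) applied at the image of a generating tuple $\overline{a}$ of $A$, whose pp-type in $D_A$ is $\langle\phi_A\rangle_{\cal D}$ by \ref{Denvpp} (where $\phi_A$ generates ${\rm pp}^A(\overline{a})$). The main obstacle is the verification of ${\cal D}$-atomicity of $D_A$: the pushout-preenvelope step enlarges the pp-types of the tuples involved rather than capping them, so one must track these enlargements and verify that they stabilise at principal pp-types. The essential input is the pp-analog of Vaught's ``isolated types dense'' condition, which is automatic in pp-logic because every pp formula $\phi$ generates the principal pp-type $\langle\phi\rangle_{\cal D}$. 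With careful diagonal bookkeeping (at each stage committing to a ${\cal D}$-strongest pp formula currently satisfied by the tuple) one shows that the limit pp-type of every tuple in $D_A$ is of the form $\langle\phi\rangle_{\cal D}$ for some pp formula $\phi$, yielding the desired ${\cal D}$-atomicity.
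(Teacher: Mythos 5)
Your proposal has two genuine gaps, one in each half. The reduction asserts that a countably generated ${\cal D}$-atomic module is automatically strictly ${\cal D}$-atomic, which via 3.10(2) amounts to ${\cal D}$-pure-projective $\Rightarrow$ strictly ${\cal D}$-atomic. That implication is true, but in the paper it is \ref{DateqDpp}, whose proof goes through \ref{pureepi} and hence through the very existence theorem you are trying to prove, so you need an independent argument; the one you offer does not close the gap. Concretely, the step ``one finds a tuple $\overline{a}$ in $\bigoplus_i A_i$ with $g\overline{a}=h\overline{c}$ such that $(\bigoplus_i A_i,\overline{a})$ is a free realisation of a ${\cal D}$-generator of ${\rm pp}^M(\overline{c})$'' is unjustified: the combined preenvelope $g=\bigoplus g_i$ need not be surjective, so $h\overline{c}$ need not lie in its image, and even if it did there is no reason for ${\rm pp}^{\bigoplus A_i}(\overline{a})$ to be generated by a ${\cal D}$-generator of ${\rm pp}^M(\overline{c})$. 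The gap in the construction is the one you yourself flag as ``the main obstacle'' and then dismiss with hand-waving: every operation you perform on $D_n$ (pushing out along $h_\phi$, then taking a ${\cal D}$-preenvelope) is a morphism out of $D_n$ and therefore can only enlarge the pp-types of tuples in $D_n$. There is no mechanism to cap the pp-type of a tuple at a principal one, and ``committing to a ${\cal D}$-strongest pp formula currently satisfied by the tuple'' presupposes that such a strongest formula exists, which is exactly the atomicity you are trying to establish — in an arbitrary $D_n\in{\cal D}$ the tuple's pp-type may simply fail to be ${\cal D}$-finitely generated, and a Vaught-type density observation about principal types does not prevent this.

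The paper avoids both difficulties by never stepping into ${\cal D}$ until the very last colimit: the chain consists of finitely presented modules $B_n$, and $B_{n+1}$ is chosen as a free realisation of a carefully assembled conjunction of formulas drawn from the sets $(\theta_i)_{\downarrow{\cal D}}$ of ${\cal D}$-consequences of the quantifier-free generators $\theta_i$ of ${\rm pp}^{B_i}(\overline{b}_i)$. Because the $B_n$ are finitely presented, every tuple in every $B_n$ genuinely has a finitely generated pp-type, so there is something to track; the diagonal enumeration guarantees that every $\phi\in(\theta_n)_{\downarrow{\cal D}}$ is witnessed at some later stage; and the key Lemma \ref{5.4} (any morphism $B_n\to D\in{\cal D}$ factors through $f_n\colon B_n\to B_{n+1}$, hence through every later $B_m$) simultaneously yields that $D_A\in{\cal D}$, that ${\rm pp}^{D_A}(f_{n\infty}\overline{b}_n)=\langle\theta_n\rangle_{\cal D}$ (so $D_A$ is ${\cal D}$-atomic and $A\to D_A$ is a preenvelope), and that $D_A$ is strictly ${\cal D}$-atomic. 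To repair your approach you would need to reinstate this explicit, finite-stage control over pp-types, which in effect means abandoning the strategy of building inside ${\cal D}$ and taking preenvelopes at each step.
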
 
\begin{proof}
The construction of $D_A$ is an inductive one; set $B_0=A$.

\vspace{4pt}

Say $A$ is generated by $\overline{a} = \overline{a}_0$ with pp-type generated by the (quantifier-free) formula $\theta(\overline{x}_1)$.  Set $\theta_0 =\theta$.

Enumerate:  $(\theta_0)_{\downarrow\cal D} =\{ \phi_{1j}: j\geq 1\}$.

\vspace{4pt}

Let $\overline{a}_1$ in $B_1$ be a free realisation of $\phi_{11}$ so, by \cite[1.2.17]{PreNBK}, we have $f_0:A\to B_1$ with $f\overline{a}_0 =\overline{a}_1$.  Take $\overline{b_1}=\overline{a}_1\overline{b}_1'$ generating $B_1$, with pp-type generated by $\theta_1 = \theta_1(\overline{x}_1, \overline{x}_1')$; set $\overline{x}_2 = \overline{x}_1 \, \overline{x}_1'$ - the concatenation of $\overline{x}_1$ and $\overline{x}_1'$.

\vspace{4pt}

Enumerate:  $(\theta_1)_{\downarrow\cal D} = \{\phi_{2j}; j\geq 1\}$.

\vspace{4pt}

Let $\overline{a}_2$ in $B_2$ be a free realisation of $\phi_{12}(\overline{x}_1) \wedge\phi_{21}(\overline{x}_1, \overline{x}_1')$ and choose a morphism $f_1:B_1\to B_2$ taking $\overline{b}_1$ to $\overline{a}_2$.

\vspace{4pt}

Continue inductively:  having produced a free realisation $(B_n, \overline{a}_n)$ of $\phi_{1n}(\overline{x}_1) \wedge \phi_{2,n-1}(\overline{x}_2) \wedge \dots \wedge \phi_{n1} (\overline{x}_n)$, and a morphism $f_{n-1}:B_{n-1} \to B_n$ taking $\overline{b}_{n-1}$ to $\overline{a}_n$, choose a generating tuple $\overline{b_n} = \overline{a}_n \, \overline{b}_n'$ for $B_n$, with pp-type generated by $\theta_n = \theta_n(\overline{x}_{n+1}) = \theta_n (\overline{x}_n, \overline{x}_n')$.

\vspace{4pt}

Then enumerate $(\theta_n)_{\downarrow\cal D} = \{ \phi_{n+1,j}: j\geq 1\}$ and continue by choosing a free realisation $(B_{n+1}, \overline{a}_{n+1})$ of $\phi_{1,n+1}(\overline{x}_1) \wedge \phi_{2,n}(\overline{x}_2) \wedge \dots \wedge \phi_{n+1,1} (\overline{x}_{n+1})$, and a morphism $f_n:B_n \to B_{n+1}$ taking $\overline{b}_n$ to $\overline{a}_{n+1}$

\vspace{4pt}

Having continued the construction inductively, set $D_A= \varinjlim ((B_n)_n, (f_n:B_n \to B_{n+1})_n)$, with $f_{n\infty}:B_n \to D_A$ the limit maps.

\vspace{4pt}

We {\bf claim} that $D_A$ is in ${\cal D}$, is strictly ${\cal D}$-atomic and the pp-type of $f_{0\infty}(\overline{a})$ in $D_A$ is ${\cal D}$-generated by $\theta$.

\vspace{4pt}

Before going on to prove our claims, we note some points about the construction:

\noindent $\bullet$  for each $n$ and $m\geq n$, $f_{nm}\overline{b}_n$ is an initial segment of $\overline{b}_m$, where $f_{nm}$ denotes the composition $f_{m-1,m}\dots f_{n,n+1}$;

\noindent $\bullet$  for each $n$, the formula $\theta_n$ is ${\cal D}$-equivalent to each $\phi_{n+1,j}$ where, recall, we say that two formulas are ${\cal D}${\bf -equivalent} if they have the same solution set in each $D\in {\cal D}$;

\vspace{4pt}

\noindent and a lemma:

\vspace{4pt}

\begin{lemma}\label{5.4}\marginpar{5.4}  Given any $B_n$ and morphism $g:B_n \to D \in {\cal D}$, there is a factorisation through $f_n:B_n \to B_{n+1}$, and hence, by induction, through any $f_{nm}:B_n \to B_m$.
\end{lemma}
\begin{proof} of Lemma:  Since $\overline{a}_{n+1} =f_n\overline{b}_n$ is a free realisation of $\phi_{1,n+1}(\overline{x}_1) \wedge \phi_{2,n}(\overline{x}_2) \wedge \dots \wedge \phi_{n+1,1} (\overline{x}_{n+1})$, it will be sufficient to show that $g\overline{b}_n$ satisfies each of the formulas $\phi_{i, n+2-i}(\overline{x}_i)$.  But, for $i=1, \dots, n+1$, $\phi_{i, n+2-i}(\overline{x}_i) \in (\theta_{i-1})_{\downarrow\cal D}$ and $\overline{b}_{i-1}$ satisfies $\theta_{i-1}$, hence so does $gf_{i-1,n}\overline{b}_{i-1}$, which is the initial segment of $g\overline{b}_n$ from which we deduce that $g\overline{b}_n$ satisfies $\phi_{i,n+2-i}(\overline{x}_i)$.
\end{proof} of Lemma.

\begin{cor}\label{5.5}\marginpar{5.5} of Lemma:  If $\psi$ generates the pp-type of $f_{nm}(\overline{b}_n)$ in $B_m$, then $\psi \in \langle \theta_n\rangle_{\cal D}$.
\end{cor}
\begin{proof} of Corollary:  Suppose that $D\in {\cal D}$ and  $\overline{d} \in \theta_n(D)$.  So there is $g:B_n \to D$ taking $\overline{b}_n$ to $\overline{d}$.  By the Lemma, this extends to a morphism from $B_m$ to $D$.  So $\overline{d} \in \psi(D)$, as required.
\end{proof} of Corollary

Now the proofs of the claims:

\vspace{4pt}

\noindent 1)  $D_A\in {\cal D}$: $\,\,$ Suppose $\phi/\psi$ is closed on ${\cal D}$ and take $\overline{d} \in \phi(D_A)$.  Note that $\overline{d} = f_{n\infty}\overline{b}_n \cdot \overline{r}$ for some $n$ and matrix $\overline{r}$ over $R$.\footnote{If $d$ is a single element, this means just $d=\sum_i\, f_{n\infty}(b_i)r_i$, that is, $\overline{r}$ is a column vector; if $\overline{d} = (d_1,\dots d_k)$ then $\overline{r}$ is a matrix with $k$ columns.}  Since pp formulas commute with directed colimits \cite[1.2.31]{PreNBK}, we may take $n$ to be such that $\overline{b}_n\cdot \overline{r} \in \phi(\overline{x_n}\cdot \overline{r})(B_n)$.\footnote{Here $\phi$ is a formula with $k$ free variables and $\phi(\overline{x}_n \cdot \overline{r})$ is the pp formula where the $t$-th free variable is replaced by $\sum_i \, x_ir_{it}$, $t=1, \dots, k$.}  Therefore $\psi(\overline{x}_n\cdot \overline{r}) \in (\theta_n(\overline{x}_n))_{\downarrow\cal D}$, so is $\phi_{n+1,j}$ for some $j$.  By construction, there is $m\geq n$ (indeed $m=n+j$ works) such that $f_{nm}\overline{b}_n \cdot \overline{r} \in \psi(\overline{x}_n\cdot \overline{r})(B_m)$.  Since $\overline{d} = f_{n\infty}\overline{b}_n \cdot \overline{r} = f_{m\infty}f_{nm}\overline{b}_n \cdot \overline{r}$, we deduce that $\overline{d} \in \psi(D_A)$.

Thus every pp-pair closed on ${\cal D}$ is closed in $D_A$, hence, by definition of definable categories (see Section \ref{secmodth}) $D_A\in {\cal D}$.

\vspace{4pt}

\noindent 2)  $D_A$ is ${\cal D}$-atomic: $\,\,$ Suppose $\overline{d}$ is from $D_A$, say $\overline{d} = f_{n\infty}\overline{b}_n \cdot \overline{r}$ as above.  It is sufficient to take $\overline{d} =f_{n\infty}\overline{b}_n$ since, if the pp-type of the latter in $D_A$ is ${\cal D}$-generated by a pp formula $\theta$, then that of $f_{n \infty}\overline{b}_n \cdot \overline{r}$ will be ${\cal D}$-generated by $\exists \overline{y} \, (\theta(\overline{y}) \wedge \overline{x} = \overline{y} \cdot \overline{r})$.  We claim that, in fact, the pp-type of $f_{n\infty}\overline{b}_n$ in $D_A$ is ${\cal D}$-generated by $\theta_n$.  Since $\theta_n \in {\rm pp}^{B_n}(\overline{b}_n)$ certainly $\theta_n \in {\rm pp}^{D_A}(f_{n\infty}\overline{b}_n)$.  In the other direction, we have that ${\rm pp}^{D_A}(f_{n\infty}\overline{b}_n) = \bigcup_{m\geq n} \, {\rm pp}^{B_n}(f_{nm}\overline{b}_n)$ (by \cite[1.2.31]{PreNBK}) again).  By \ref{5.5}, each ${\rm pp}^{B_n}(f_{nm}\overline{b}_n)$ is a subset of $\langle\theta_n\rangle_{\cal D}$, and so we have that ${\rm pp}^{D_A}(f_{n\infty}\overline{b}_n) \subseteq \langle\theta_n \rangle_{\cal D}$, as required.

\vspace{4pt}

In particular, if we start with a pp formula $\phi$ and take a free realisation $(A,\overline{a}')$ of $\phi$ as the starting point of the construction, if we choose a generating tuple $\overline{a}_0 = \overline{a}' \, \overline{b}_0'$, then continue and build $D_A$ as before, then the pp-type of the image of $\overline{a}_0$ in $D_A$ will be ${\cal D}$-generated by $\phi$.  We state this for easy reference.

\begin{cor}\label{5.6} \marginpar{5.6} If $A$ is finitely presented and we construct $D_A$ as above, then for every $\overline{a}$ from $A$, if $\phi$ is such that $\langle \phi \rangle = {\rm pp}^A(\overline{a})$, then ${\rm pp}^{D_A}(f_{0\infty}\overline{a}) = \langle \phi \rangle_{\cal D}$.
\end{cor}

\vspace{4pt}

\noindent 3)  $D_A$ is strictly ${\cal D}$-atomic: $\,\,$ Since any tuple from $D_A$ has the form $f_{n\infty}\overline{b}_n \cdot \overline{r}$, it is enough to consider tuples of the form $f_{n\infty}\overline{b}_n $.  Suppose, then, that $\overline{d}$ is a tuple from $D\in {\cal D}$ such that ${\rm pp}^{D_A}(f_{n\infty}\overline{b}_n) \subseteq {\rm pp}^D(\overline{d})$.  We must produce a morphism from $D_A$ to $D$ extending the partial map which takes $f_{n\infty}\overline{b}_n$ to $\overline{d}$.  It will be sufficient, by construction of $D_A$, to extend inductively, so coherently, to each $B_m$, $m\geq n$, the morphism $g_n:B_n \to D$ defined by $\overline{b}_n \mapsto f_{n\infty}\overline{b}_n \mapsto \overline{d}$.

But that is exactly what \ref{5.4} above gives us.
\end{proof}

Remark:  Of course, the same applies to each $B_n$ in the construction (just take $B_n$ as the starting point).

\begin{cor} (cf.~\cite[3.13]{RotHab}) If $D_A$ is constructed as above then $D_A$ is a union of submodules $B'_n (= f_{n \infty}B_n)$ such that, if $\overline{b}'_n$ is a finite generating tuple for $B'_n$ and if $\theta'_n$ is a quantifier-free formula generating ${\rm pp}^{B'_n}(\overline{b}'_n)$, then ${\rm pp}^{D_A}(\overline{b}'_n)$ is ${\cal D}$-generated by $\theta'_n$.
\end{cor}

It would be nice to have a proof in somewhat the same style for arbitrary rings, either by making a ``wider" construction that still makes only countably many construction steps (Makkai's proof, building a term model, is in this style but is not very ``algebraic") or using the fact that an uncountable ring is a direct union of elementary subrings of smaller (even countable) cardinality, and then tensoring up what we have over those subrings (results from \cite{PreTens} and \cite{PreRepEE} describe what happens when tensoring up).  The latter approach would be neat but I have not seen a way to build coherent directed systems of preenvelopes over approximating definable subcategories.

\vspace{4pt}

We now drop the countability hypothesis on the ring $R$ which was imposed in this subsection.

\subsection{The ring of definable scalars}

If ${\cal D}$ is a definable subcategory of ${\rm Mod}\mbox{-}R$, then the {\bf ring} $R_{\cal D}$ {\bf of definable scalars} of ${\cal D}$ is the set of pp-definable maps on ${\cal D}$ (see Section \ref{secmodth}); if ${\cal D} = \langle M \rangle$, the definable subcategory generated by $M$, we also write $R_M$ for $R_{\cal D}$.

\begin{lemma}\label{cyclic} \marginpar{cycli}  If $R\xrightarrow{f}D_R$ is any ${\cal D}$-preenvelope of $R$ in ${\cal D}$, then $D_R$ is cyclic, generated by $a=f1$, over its endomorphism ring.
\end{lemma}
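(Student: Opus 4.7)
The plan is to apply the defining property of a \({\cal D}\)-preenvelope directly, using the fact that \(R\) (as a 1-sorted ring, per the convention of Section~2) is generated by \(1_R\) as an \(R\)-module, so that an \(R\)-module homomorphism out of \(R\) is determined by where it sends \(1\).

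More precisely, I would proceed as follows. Fix an arbitrary element \(d \in D_R\). Define the \(R\)-module homomorphism \(g_d : R \to D_R\) by \(g_d(r) = d\cdot r\); equivalently, \(g_d\) is the unique morphism sending \(1_R\) to \(d\). Since \(D_R \in {\cal D}\) and \(f : R \to D_R\) is a \({\cal D}\)-preenvelope, the preenvelope property applied to \(g_d\) (with target \(D_R\) itself) yields a morphism \(h : D_R \to D_R\) such that \(h \circ f = g_d\). Evaluating at \(1_R\), we get
\[
h(a) \;=\; h(f(1_R)) \;=\; g_d(1_R) \;=\; d,
\]
and since \(h \in \operatorname{End}(D_R)\), this exhibits \(d\) as lying in the orbit \(\operatorname{End}(D_R)\cdot a\). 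As \(d\) was arbitrary, \(D_R = \operatorname{End}(D_R)\cdot a\), i.e.\ \(D_R\) is cyclic over its endomorphism ring, generated by \(a\).

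There is really no hard step here: the whole argument is a single invocation of the preenvelope property in the case where the codomain happens to be \(D_R\) itself. The only thing worth flagging is that we are using the 1-sorted convention from Section~2 to identify elements of \(D_R\) with morphisms \(R \to D_R\); in the many-sorted setting one would replace \(R\) by each of its representable summands \(R(-,c)\) and the same argument would give that \(D_R\) is generated, over its endomorphism ring, by the sort-indexed family \((f(1_c))_c\), but in the statement as written no modification is needed.
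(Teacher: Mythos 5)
Your proof is correct and is essentially identical to the paper's: both fix an arbitrary element $d\in D_R$, apply the $\mathcal{D}$-preenvelope property to the map $R\to D_R$ sending $1\mapsto d$, and read off the resulting endomorphism $h$ with $h(a)=d$. The extra remark about the many-sorted convention is sensible but not needed here.
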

\begin{proof}
If $b\in D_R$ and $g:R \to D_R$ is defined by $1\mapsto b$, then the preenveloping property gives us an endomorphism of $D_R$ as shown in the diagram and as required.
$\xymatrix{R \ar[r]^f \ar[rd]_g & D_R\ar[d] \\ & D_R}$ 
\end{proof}

\begin{lemma}\label{rds} \marginpar{rds}  If $R\xrightarrow{f}D_R$ with $a=f1$ is any ${\cal D}$-atomic preenvelope of $R$ in ${\cal D}$ and if $R'$ is the ring of definable scalars of $D_R$, then $aR'$ is the submodule consisting of those elements which are definable in $D_R$ by a pp formula with parameter $a$:  
$$aR' = \{ b\in D_R: \mbox{ if } {\rm pp}^{D_R}(a,b) = \langle \phi\rangle_{\cal D} \mbox{ then } \phi(0,D_R)=0  \}.$$
\end{lemma}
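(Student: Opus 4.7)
The plan is to view a candidate scalar as a pp formula $\psi(x,y)$ that defines a total, single-valued function on $\langle D_R \rangle$, and to connect such formulas with the pp-type of the pair $(a,b)$ in $D_R$. Two preliminary observations drive the whole argument. First, since $R_R$ is freely generated by $1$, we have ${\rm pp}^R(1) = \langle x=x \rangle$, so Lemma~\ref{Denvpp} gives ${\rm pp}^{D_R}(a) = \langle x=x \rangle_{\cal D}$: every pp formula satisfied by $a$ in $D_R$ is closed (together with $x=x$) as a pp-pair on the whole of ${\cal D}$, hence in particular on $\langle D_R \rangle$. Second, by definition $R' = R_{\langle D_R \rangle}$ consists of those pp formulas $\psi(x,y)$ which, on every $D \in \langle D_R \rangle$, define an additive total function; equivalently $\exists y\,\psi(x,y)$ is closed with $x=x$ on $\langle D_R \rangle$ and $\psi(0,D) = 0$ for every such $D$.

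For the inclusion $\supseteq$, take $b \in D_R$ with ${\cal D}$-generator $\phi$ of ${\rm pp}^{D_R}(a,b)$ (which exists because $D_R$ is ${\cal D}$-atomic) and with $\phi(0,D_R) = 0$. I will show that $\phi$ itself defines a scalar $r_\phi \in R'$ satisfying $a \cdot r_\phi = b$. Totality: $\exists y\,\phi(x,y)$ lies in ${\rm pp}^{D_R}(a) = \langle x=x \rangle_{\cal D}$, so $\exists y\,\phi(d,y)$ holds for every $d$ in every $D \in {\cal D}$, and a fortiori on $\langle D_R \rangle$. Uniqueness: the pp-pair $\phi(0,y)/(y=0)$ is closed on $D_R$ by hypothesis, hence on $\langle D_R \rangle$; then closure of pp formulas under subtraction turns two solutions $\phi(d,e_1),\phi(d,e_2)$ in any $D \in \langle D_R \rangle$ into $\phi(0,e_1-e_2)$, forcing $e_1 = e_2$. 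So $\phi$ defines an element $r_\phi \in R'$, and uniqueness applied to $\phi(a,b)$ and $\phi(a,a r_\phi)$ inside $D_R$ yields $b = a r_\phi$.

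For $\subseteq$, let $r \in R'$ be given by the pp formula $\psi(x,y)$, and set $b = ar$. Then $\psi(a,b)$ holds in $D_R$, so $\psi \in {\rm pp}^{D_R}(a,b)$; since $\phi$ is the ${\cal D}$-generator of this pp-type, $\phi \leq_{\cal D} \psi$, and hence $\phi(D_R) \subseteq \psi(D_R)$, which in particular gives $\phi(0,D_R) \subseteq \psi(0,D_R)$. Now $\psi(0,0)$ holds (pp formulas are satisfied at zero tuples) and the uniqueness clause of the function defined by $\psi$ on $D_R$ forces $\psi(0,D_R) = \{0\}$; consequently $\phi(0,D_R) = 0$, as required.

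The main delicate point is to keep straight the interaction between ${\cal D}$, $\langle D_R \rangle$ and $D_R$: the closure of $\phi(0,y)/(y=0)$ must be lifted from $D_R$ to $\langle D_R \rangle$ (trivially, by definition of the definable subcategory generated by $D_R$), while totality of $\phi$ on $\langle D_R \rangle$ is extracted from the stronger statement that $\exists y\,\phi(x,y)$ is closed on all of ${\cal D}$. That stronger statement is precisely the input ${\rm pp}^{D_R}(a) = \langle x=x \rangle_{\cal D}$ coming from the preenveloping hypothesis; without it the totality step of the $\supseteq$ direction has no leverage.
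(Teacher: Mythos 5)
Your proof is correct and follows the same overall skeleton as the paper's: identify the ${\cal D}$-generator $\phi$ of ${\rm pp}^{D_R}(a,b)$, show it is automatically total, and note that it then defines a scalar exactly when it is functional, i.e.\ when $\phi(0,D_R)=0$. Where you differ is in the totality step. The paper invokes the immediately preceding lemma (that $D_R$ is cyclic over its endomorphism ring), so for every $c\in D_R$ there is an endomorphism $g$ with $ga=c$, and applying $g$ to $\phi(a,b)$ gives $\phi(c,gb)$ — totality on $D_R$ itself, with the passage to $\langle D_R\rangle$ left implicit via closure of the pp-pair $(x=x)/\exists y\,\phi$. You instead read totality directly off Lemma~\ref{Denvpp}: since ${\rm pp}^{D_R}(a)=\langle x=x\rangle_{\cal D}$, the formula $\exists y\,\phi$ is implied by $x=x$ modulo the theory of all of ${\cal D}$, which is tidier and in fact slightly stronger (totality on all of ${\cal D}$, not just on $\langle D_R\rangle$). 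Both mechanisms are applications of the preenveloping hypothesis. You are also more explicit than the paper about the easy inclusion: the paper asserts that every element of $aR'$ is definable over $a$ and leaves implicit the step $\psi\in{\rm pp}^{D_R}(a,b)\Rightarrow \phi\leq_{\cal D}\psi\Rightarrow \phi(0,D_R)\subseteq\psi(0,D_R)=0$, which you spell out. In short: correct, closely parallel, with a modest and arguably cleaner variant of the totality argument.
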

\begin{proof}  
The image of $a$ under any definable map is definable over $a$, so certainly any element in $aR'$ is definable over $a$.

If $\phi$ ${\cal D}$-generates the pp-type of $(a,b)$ then since, for every $c\in D_R$ there is an endomorphism $f$ of $D_R$ taking $a$ to $c$, and hence with $\phi(c,fb)$, we see that $\phi$ defines a total relation on $D_R$.  Therefore $\phi$ gives a definable map on $D_R$ iff it is functional on $D_R$, that is, iff $\phi(0,D_R)=0$, giving the second statement.
\end{proof}

\begin{prop}\label{rdsbiend} \marginpar{rdsbiend}  Suppose that ${\cal D}$ is a definable subcategory of ${\rm Mod}\mbox{-}R$ and that $M\in {\cal D}$ is strictly ${\cal D}$-atomic and is finitely generated over its endomorphism ring.  Set $R_M$ to be the ring of definable scalars of $M$.  Then $R_M = {\rm Biend}(M_R)$ - the biendomorphism ring ${\rm End}_{{\rm End}(M_R)}(M)$ of $M_R$.
\end{prop}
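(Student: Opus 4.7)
The plan is to establish both inclusions. For $R_M \subseteq {\rm Biend}(M_R)$, I would just observe that pp formulas are preserved under $R$-linear maps, so every pp-definable scalar commutes with all $s \in S := {\rm End}(M_R)$ and therefore lies in ${\rm End}({}_S M) = {\rm Biend}(M_R)$.

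For the reverse inclusion, given $r \in {\rm Biend}(M_R)$, Lemma \ref{rds} is not directly applicable because $M$ is only $S$-finitely generated rather than cyclic over $S$. My plan is to transfer the argument to $M^n$, where cyclicity over a matrix ring is restored. Writing $M = \sum_{i=1}^n Sa_i$ and setting $a = (a_1,\dots,a_n) \in M^n$, $b = (ra_1,\dots,ra_n) \in M^n$, the module $M^n$ is cyclic over ${\rm End}(M^n_R) = M_n(S)$ with generator $a$; by Lemma \ref{dsstrDat}, $M^n$ remains strictly ${\cal D}$-atomic; and the diagonal action $r^{(n)}: M^n \to M^n$ is a biendomorphism of $M^n$ commuting with $M_n(S)$. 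I would then pick a pp formula $\psi(x,y)$ in two single variables that ${\cal D}$-generates ${\rm pp}^{M^n}(a,b)$, guaranteed by ${\cal D}$-atomicity of $M^n$.

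The hardest step is to check that $\psi(0, M^n) = 0$. The idea: if $\psi(0,d)$ in $M^n$, then strict ${\cal D}$-atomicity of $M^n$ applied to the 2-tuple $(a,b)$ produces $g \in {\rm End}(M^n_R)$ with $g(a,b) = (0,d)$, i.e.\ $ga = 0$ and $gb = d$; but $g$ commutes with the biendomorphism $r^{(n)}$, giving $d = gb = g r^{(n)} a = r^{(n)} g a = 0$. This commutation is the crux and is where the biendomorphism hypothesis is essential. The remainder of the proof of \ref{rds} (which uses only cyclic generation over the endomorphism ring and ${\cal D}$-atomicity) then transfers to $M^n$: totality of $\psi$ follows because any $\overline{c} = Ta$ with $T \in M_n(S)$ satisfies $\psi(\overline{c}, Tb)$, so together with functionality $\psi$ defines a pp-definable scalar $\tau \in R_{\langle M^n\rangle} = R_M$ with $\tau \cdot a = b$ in $M^n$.

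To finish, I would note that pp formulas in single variables evaluate coordinatewise on $M^n = M \oplus \cdots \oplus M$, making $\tau$ act on $M^n$ as the diagonal of $\tau|_M$; the identity $\tau \cdot a = b$ therefore reads $\tau(a_i) = b_i = r(a_i)$ for each $i$. Both $\tau|_M$ (by the first paragraph) and $r$ are $S$-linear endomorphisms of $M$ agreeing on the $S$-generating set $\{a_1,\dots,a_n\}$, so they coincide, and $r \in R_M$. The main obstacle throughout is precisely the non-cyclicity of $M$ over $S$; the passage to $M^n$ is the device that resolves it, while Lemma \ref{dsstrDat} ensures that strict ${\cal D}$-atomicity survives the transfer.
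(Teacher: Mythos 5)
Your proof is correct, and it takes a genuinely different route from the paper. The paper follows the proof of \cite[6.1.19]{PreNBK}: it remains with $M$ itself, explicitly assembles a two-variable pp formula $\rho(u,v)$ from the ``$i$-th coordinate'' formulas $\phi_i$ extracted from a ${\cal D}$-generator $\phi$ of ${\rm pp}^M(\overline{a},\overline{a}g)$, and verifies totality and functionality of $\rho$ directly, with strict ${\cal D}$-atomicity supplying the endomorphisms that realise solutions of $\phi_i$ and the biendomorphism hypothesis yielding the cancellation $\sum s_i a_i g = (\sum s_i a_i)g$ in the functionality check. You instead perform a Morita-type reduction to the cyclic case: pass to $M^n$, where ${\rm End}(M^n_R)=M_n(S)$ restores cyclic generation over the endomorphism ring and strict ${\cal D}$-atomicity survives by Lemma \ref{dsstrDat}; the argument for cyclic modules (in the spirit of Lemma \ref{rds} and Corollary \ref{Denvrds}) then applies, and the coordinatewise evaluation of pp formulas on finite powers transports the resulting scalar back to $M$, where $S$-linearity pins it down on the generating set. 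Both proofs rely on exactly the same two ingredients --- strict ${\cal D}$-atomicity to realise witnesses of the generating formula by endomorphisms, and the biendomorphism condition to kill the would-be obstruction to functionality --- but your version trades the explicit construction of $\rho$ and the bookkeeping over the $\phi_i$ for the routine checks that $\langle M^n\rangle=\langle M\rangle$ (so $R_{\langle M^n\rangle}=R_M$), that $M^n\in{\cal D}$, and that the diagonal action of a biendomorphism of $M$ is a biendomorphism of $M^n$ over $M_n(S)$. The paper's argument is more self-contained; yours is shorter once these standard facts are granted and makes the skeleton of the argument (cyclic case plus transfer) more transparent.
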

\begin{proof} The proof of \cite[6.1.19]{PreNBK} works in this situation; we essentially repeat it here.

Let $ g\in {\rm Biend}(M_R)$: it must
be shown that the action of $ g $ on $ M $ is pp-definable in $
M_R. $ Set $ S={\rm End}(M_R) $ and suppose that $ a_1,\dots
,a_k\in M $ are such that $ _SM=\sum _1^kSa_i. $ Then $ g $ is
determined by its action on $ \overline{a}=(a_1,\dots ,a_k) $, so
consider $ g\overline{a}. $ Since $ M $ is strictly ${\cal D}$-atomic, the pp-type of $(\overline{a}, g\overline{a})$ is ${\cal D}$-finitely generated, by, say, $\phi$.

Consider the pp formula $ \rho (u,v) $ which is 
$$ \exists x_1,\dots ,x_k \,\exists y_1,\dots ,y_k \, \big(u=\sum_1^kx_i \,\, \wedge \,\, v=\sum _1^ky_i \,\, \wedge\,\, \overline{\phi}(\overline{x}, \overline{y})\big)$$
where $\overline{\phi}(x_1,\dots, x_k, y_1,\dots, y_k)$ is $\bigwedge_{i=1}^k \phi_i(x_i,y_i)$ where $\phi_i(x_i,y_i)$ is 
$$\exists z_{i1},\dots,\hat{z}_{ii},\dots ,z_{ik}, w_{i1},\dots ,\hat{w}_{ii}, \dots
,w_{ik} \,\, \phi (z_{i1},\dots ,x_i,\dots ,z_{ik}, w_{i1},\dots,y_i,\dots ,w_{ik}).$$

It follows directly, from the strict ${\cal D}$-atomic condition and choice of $\phi$, that $M\models \phi_i(c,d)$ iff there is $s\in S$ with $sa_i=c$ and $sa_ig=d$ (the formula $\phi_i(c,d)$ says that $c$, $d$ are the $i$-th components of tuples satisfying $\phi$; note that such tuples are exactly the images of $\overline{a}$ and $g\overline{a}$ under (the same) endomorphisms).  In particular, for each $i$ and $s$, we have $M\models \phi_i(sa_i, sa_ig)$.  We claim that $\rho$ defines the action of $g$ in $M$.

First, $ \rho (u,v) $ defines a total relation  from $ u $ to $
v$: given $ c\in M $ we have $ c=\sum _1^ks_ia_i $ for some $
s_i\in S $, hence $ cg=\sum _1^ks_ia_ig. $  As commented above, $\bigwedge_{i=1}^k \phi_i(s_ia_i, s_ia_ig)$, holds.  Therefore $ (c,cg)\in \rho(M)$.

It remains to show that $ \rho  $ is functional, so suppose
$ (0,d)\in \rho(M) $.  Then there are $ c_i, d_i\in M $ such
that $ 0=\sum _1^kc_i$, $ d=\sum _1^kd_i $ and such that $M\models \phi_i(c_i,d_i)$ for each $ i$.  As commented above, it follows that there are $ s_i\in S $ with $
s_ia_i=c_i $ and $ s_ia_ig=d_i $ for
$ i=1,\dots ,k. $  So $ d=\sum d_i=\sum s_ia_ig=(\sum
s_ia_i)g $ and $ 0=\sum c_i=\sum s_ia_i, $ from which we deduce $
d=0, $ as required.
\end{proof}

\begin{cor}\label{Denvrds} \marginpar{Denvrds}  Suppose that $R\to D_R$ is a strictly ${\cal D}$-atomic ${\cal D}$-preenvelope of $R$.  Then the ring,  $R_{D_R}$, of definable scalars of $D_R$ is ${\rm Biend}(D_R)$.
\end{cor}
\begin{proof}  This is immediate from \ref{cyclic} and \ref{rdsbiend} but here is a simpler direct proof.  

Every definable scalar of $D_R$ is a biendomorphism so, for the converse, take $\alpha \in {\rm Biend}(M)$ and set $b=a\alpha$, where $a$ is the image of $1_R$ in $D_R$.  Choose a generator $\rho$ for ${\rm pp}^{D_R}(a,b)$; we claim that $\rho$ defines a scalar on $D_R$. Since $D_R$ is generated by $a$ as an ${\rm End}(D_R)$-module, $\rho$ is total on $D_R$.  Also, if we have $\rho(0,d)$ for some $d\in D_R$, then there is an endomorphism $f$ of $D_R$ with $fa=0$ and $fb=d$.  But then $d=fb =f(a\alpha) = (fa)\alpha =0$, as required.
\end{proof}

\section{Tilting and silting classes}\label{sectilt} \marginpar{sectilt}

An $R$-module $T$ is {\bf tilting} if ${\rm Gen}(T)=T^{\perp_1}$.  Here ${\rm Gen}(T)$ is the class of modules $M$ generated by $T$, that is, there is an epimorphism $T_0 \to M$ with $T_0\in {\rm Add}(T)$, where ${\rm Add}(T)$ is the closure of $T$ under direct sums and direct summands, and $T^{\perp_1} = \{ M: {\rm Ext}^1(T,M)=0\}$.  Equivalently $T$ is tilting iff ${\rm pdim}(T) \leq 1$, if ${\rm Ext}^1(T,T^{(\kappa)})=0$ for any $\kappa$ and if there is an exact sequence $0 \to R \to T_0 \to T_1 \to 0$ with $T_0, T_1 \in {\rm Add}(T)$.  If so, then $R\to T_0$ is a ${\rm Gen}(T)$-preenvelope of $R$.  Also, if $T$ is tilting, then ${\rm Gen}(T) = {\rm Pres}(T)$ (the class of modules presented by ${\rm Add}(T)$), that is, for any $M \in {\rm Gen}(T)$, there is an exact sequence $T_1 \to T_0 \to M \to 0$ with $T_0, T_1 \in {\rm Add}(T)$.

More generally an $R$-module $T$ is {\bf silting} if the class ${\rm Gen}(T)$ that it generates has the form ${\cal D}_\sigma = \{M: \text{ the map }(\sigma,M)\,:\,(P_1,M) \to (P_0,M) \text{ is surjective}\}$ for some projective presentation $P_0 \to P_1 \to T \to 0$ of $T$, in which case we refer to this as the {\bf silting class}  that it generates; note that it is a definable subcategory of ${\rm Mod}\mbox{-}R$.  Furthermore, the elementary dual definable category of ${\rm Gen}(T)$ is that, ${\rm Cogen}(T^\ast)$, cogenerated by the dual cosilting module $T^\ast$.

For all this, including the fact that cosilting modules are pure-injective, see, for instance, \cite{ColFul}, \cite{AHTT}, \cite{AHMVSilt}.

Also (\cite[9.8]{AngHerML}), if $T$ is tilting, then ${\rm Add}(T) \subseteq {\rm Cogen}(T^\ast)\mbox{-}ML$, that is, every module in ${\rm Add}(T)$ is ${\rm Gen}(T)$-atomic.  In fact, we have the following where, for tilting modules this is, by \ref{streqstr}, already contained in \cite[9.8]{AngHerML}.

\begin{prop}\label{siltstrDat} \marginpar{siltstrDat}  Suppose that $T$ is a silting module in ${\rm Mod}\mbox{-}R$ and let ${\cal D} = {\rm Gen}(T)$ be the (definable) silting class generated by $T$.  
Then $T$ is strictly ${\cal D}$-atomic and, for some $n$, $R\to T^n$ is a ${\cal D}$-preenvelope for $R$.  The same is true for ${\cal D} = \langle T \rangle$, the definable category generated by $T$.
\end{prop}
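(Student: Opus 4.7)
The plan is to prove strict ${\cal D}$-atomicity by showing that every pure epimorphism onto $T$ from ${\cal D}$ splits (via an ${\rm Ext}^1$-vanishing argument over $R/{\rm ann}(T)$), and to construct the preenvelope $R \to T^n$ by truncating the silting preenvelope to the finite support of the image of $1_R$.

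For strictness, I would take any pure epimorphism $\pi: D \to T$ with $D \in {\cal D}$. Its kernel $K$ is pure in $D$, hence in ${\cal D}$ since definable categories are closed under pure submodules. All three of $K, D, T$ lie in ${\rm Gen}(T) \subseteq {\rm Mod}\mbox{-}R/{\rm ann}(T)$, so $0 \to K \to D \to T \to 0$ is an exact sequence in ${\rm Mod}\mbox{-}R/{\rm ann}(T)$. Because $T$ is tilting over $R/{\rm ann}(T)$ and ${\rm Gen}(T) = T^{\perp_1}$ relative to that ring, ${\rm Ext}^1_{R/{\rm ann}(T)}(T, K) = 0$ and the sequence splits. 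Thus $\pi$ admits a (global, hence local) section, so $T$ is locally ${\cal D}$-pure-projective; by \ref{DateqDpp} this is equivalent to $T$ being strictly ${\cal D}$-atomic.

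For the preenvelope, silting supplies a ${\cal D}$-preenvelope $f: R \to T_0$ with $T_0 \in {\rm Add}(T)$, obtained by applying the tilting result to $R/{\rm ann}(T)$ and precomposing with the projection $R \to R/{\rm ann}(T)$. Fix a decomposition $T_0 \oplus T_0' = T^{(I)}$ with inclusion $\iota: T_0 \to T^{(I)}$ and splitting $\rho: T^{(I)} \to T_0$. The element $\iota f(1) \in T^{(I)}$ has finite support, so lies in $T^n \subseteq T^{(I)}$ for some $n$; let $j: T^n \to T^{(I)}$ be the inclusion. Define $f': R \to T^n$ by $r \mapsto \iota f(1)\cdot r$; then $j f' = \iota f$ and so $\rho j f' = f$. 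Given any $g: R \to D$ with $D \in {\cal D}$, the preenvelope property gives $h: T_0 \to D$ with $hf = g$, hence $g = (h \rho j) f'$, proving that $f'$ is a ${\cal D}$-preenvelope. Moreover $T^n$ is strictly ${\cal D}$-atomic by \ref{dsstrDat}.

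Finally, for ${\cal E} := \langle T \rangle \subseteq {\cal D}$: since $T \in {\cal E}$, the generator $\phi$ computed in $T$ modulo ${\cal D}$ still generates ${\rm pp}^T(\overline{a})$ modulo ${\cal E}$ (the containment $\langle \phi \rangle_{\cal D} \subseteq \langle \phi \rangle_{\cal E}$ is automatic, and $\langle \phi \rangle_{\cal E} \subseteq {\rm pp}^T(\overline{a})$ uses only $T \in {\cal E}$). The strict atomicity witnesses and the preenvelope factorisations require targets $D \in {\cal E}$, all of which lie in ${\cal D}$, so everything transfers. The only real subtlety is getting the ${\rm Ext}^1$-vanishing in the right category: one must work in ${\rm Mod}\mbox{-}R/{\rm ann}(T)$, where $T$ is actually tilting, rather than in ${\rm Mod}\mbox{-}R$; once that is set up the rest is a direct application of \ref{DateqDpp} and the truncation trick.
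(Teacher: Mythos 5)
Your proof is correct, and it is not merely a rephrasing of the paper's argument; the key structural choices differ in both halves.

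For strict ${\cal D}$-atomicity, you and the paper use the same essential input -- ${\rm Ext}^1$-vanishing of $T$ on ${\cal D}$ (via the tilting identity ${\rm Gen}(T)=T^{\perp_1}$ over $R/{\rm ann}(T)$, with the careful observation that the relevant short exact sequence lives in ${\rm Mod}\mbox{-}R/{\rm ann}(T)$) -- but route the conclusion differently. The paper starts from the single pure epimorphism supplied by Lemma \ref{pureepi}, whose domain is already strictly ${\cal D}$-atomic, splits it, and invokes \ref{dsstrDat} (closure under direct summands). You instead split an \emph{arbitrary} pure epimorphism $D\to T$ with $D\in{\cal D}$, thereby proving the stronger assertion that $T$ is ${\cal D}$-pure-projective, and then pass through \ref{DateqDpp} to get strict ${\cal D}$-atomicity. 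Both arguments ultimately lean on Makkai's existence theorem (the paper via \ref{pureepi}, you via the $\Leftarrow$ direction of \ref{DateqDpp}, whose proof also cites \ref{pureepi}), so neither buys a reduction in the logical prerequisites, but your version records the stronger pure-projectivity of $T$ along the way.

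For the preenvelope $R\to T^n$, the routes genuinely diverge. The paper starts from a Makkai strictly ${\cal D}$-atomic preenvelope $R\to D_R$, chooses an epimorphism $T^{(\kappa)}\to D_R$, applies local pure-projectivity of $D_R$ to lift the image of $1$ back into $T^{(\kappa)}$, and then projects onto a finite power. You bypass $D_R$ entirely: you use the tilting short exact sequence $0\to \bar R\to T_0\to T_1\to 0$ over $\bar R:=R/{\rm ann}(T)$ (pulled back along $R\to\bar R$) to obtain a concrete ${\cal D}$-preenvelope $R\to T_0$ with $T_0\in{\rm Add}(T)$, realize $T_0$ as a summand of $T^{(I)}$, and chop to $T^n$ via finite support. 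This is more direct and more explicit -- the approximating object is read off the tilting data -- at the cost of being specific to the silting setting (the paper's route is the one that generalizes to arbitrary definable ${\cal D}$). Your transfer to ${\cal E}=\langle T\rangle$ is also sound: the containments $\langle\phi\rangle_{\cal D}\subseteq\langle\phi\rangle_{\cal E}\subseteq{\rm pp}^T(\overline a)$ close the loop for atomicity, and the strict-atomicity witnesses and preenvelope factorisations only require targets in ${\cal E}\subseteq{\cal D}$, exactly as you say.
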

\begin{proof} Let ${\cal D}$ be either ${\rm Gen}(T)$ or $\langle T \rangle$.  By \ref{pureepi} there is a pure epimorphism $p: D \to T$ with $D \in {\cal D}$ strictly ${\cal D}$-atomic, so we get an exact sequence $ 0 \to K = {\rm ker}(p) \to D \to T \to 0$ with $K \in {\cal D}$.  Now, $T$ is ${\rm Ext}$-projective in ${\cal D}$ (\cite[3.8]{AHMVSilt}), so $T$ is a direct summand of $D$ hence, by \ref{dsstrDat}, $T$ is strictly ${\cal D}$-atomic, as claimed.

Let $R \to D_R$ be a strictly ${\cal D}$-atomic ${\cal D}$-preenvelope of $R$.  Let $a$ be the image of $1$ in $D_R$.  Since $D_R \in {\cal D}\subseteq {\rm Gen}(T)$, there is an epimorphism $g:T^{(\kappa)} \to D_R$.  Since $D_R$ is locally ${\cal D}$-projective, \ref{DateqDpp}, there is $h:D_R \to T^{(\kappa)}$ with $gha=a$ and hence such that the pp-type of $ha$ in $T^{(\kappa)}$, and hence in $T^n$ for some $n$, is ${\cal D}$-generated by the pp formula $x=x$.  Therefore the composition $R \to D_R \xrightarrow{h} T^{(\kappa)} \xrightarrow{\pi} T^n$ is a ${\cal D}$-preenvelope of $R$.
\end{proof}

By \ref{dsstrDat} we deduce the following.

\begin{cor}\label{siltstrat} \marginpar{siltstrat} Suppose that $T$ is a silting module in ${\rm Mod}\mbox{-}R$ and let ${\cal D} = {\rm Gen}(T)$ be the (definable) silting class generated by $T$ or ${\cal D} = \langle T \rangle$ the definable subcategory generated by $T$.  Then every module in ${\rm Add}(T)$ is strictly ${\cal D}$-atomic.
\end{cor}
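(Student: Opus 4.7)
The plan is to reduce the corollary directly to Proposition \ref{siltstrDat} combined with the closure properties in Lemma \ref{dsstrDat}. By definition, every module in ${\rm Add}(T)$ is a direct summand of some direct sum $T^{(\kappa)}$, so it suffices to verify that direct summands of $T^{(\kappa)}$ are strictly ${\cal D}$-atomic.

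First I would quote Proposition \ref{siltstrDat} to assert that $T$ itself is strictly ${\cal D}$-atomic (for either choice of ${\cal D}$). Then I would apply the second half of Lemma \ref{dsstrDat} - closure of strictly ${\cal D}$-atomic modules under arbitrary direct sums - to conclude that $T^{(\kappa)}$ is strictly ${\cal D}$-atomic for every cardinal $\kappa$. Finally, since a direct summand is in particular a pure submodule, the first half of Lemma \ref{dsstrDat} - closure under pure submodules - gives that any $M \in {\rm Add}(T)$ is strictly ${\cal D}$-atomic.

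There is no real obstacle here; the corollary is essentially a bookkeeping consequence of results already proved. The only point to note is that one must be slightly careful that ${\rm Add}(T)$ is actually contained in ${\cal D}$ so that the notion of being strictly ${\cal D}$-atomic applies: for ${\cal D} = {\rm Gen}(T)$ this is clear, and for ${\cal D} = \langle T \rangle$ it follows since $\langle T \rangle$ contains all direct sums of copies of $T$ and their direct summands (definable categories are closed under direct sums and pure subobjects, in particular direct summands).
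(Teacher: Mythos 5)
Your proof is correct and is precisely what the paper intends: the paper's one-line proof ``By \ref{dsstrDat} we deduce the following'' is exactly your three-step unfolding (``$T$ strictly ${\cal D}$-atomic by \ref{siltstrDat}'' $\Rightarrow$ ``$T^{(\kappa)}$ strictly ${\cal D}$-atomic by closure under direct sums'' $\Rightarrow$ ``direct summands strictly ${\cal D}$-atomic by closure under pure submodules''). Your additional remark that one must check ${\rm Add}(T)\subseteq {\cal D}$ so that the predicate ``strictly ${\cal D}$-atomic'' even applies is a point the paper leaves implicit, and you verify it correctly for both choices of ${\cal D}$.
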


The converse - that every strictly ${\cal D}$-atomic module be in ${\rm Add}(T)$ - is far from true.  For ${\cal D} = {\rm Gen}(T)$ take $T=R$:  in general not every finitely presented $R$-module is projective.  For ${\cal D} = \langle T \rangle$ take $T={\mathbb Z}_{2^\infty}$.  Every module in $\langle {\mathbb Z}_{2^\infty}\rangle$ is $\Sigma$-pure-injective so is strictly $\langle {\mathbb Z}_{2^\infty}\rangle$-atomic (by \cite[4.4.7, 4.3.9]{PreNBK}) but ${\mathbb Q} \in \langle {\mathbb Z}_{2^\infty}\rangle$ is not in ${\rm Add}( {\mathbb Z}_{2^\infty})$.  In \ref{strDrat} we do see a special case where every strictly ${\rm Gen}(T)$-atomic module is pure in a direct sum of copies of $T$.

The next result now follows from \ref{dualnegisol}.

\begin{cor}\label{cosiltnegisol} \marginpar{cosiltnegisol}  Suppose that $T$ is a silting module in ${\rm Mod}\mbox{-}R$, $S={\rm End}(T)$ and $E$ is a minimal injective cogenerator for $S\mbox{-}{\rm Mod}$.  Set $T^\ast = {\rm Hom}_S(T,E)$ to be the dual cosilting module.  Then every indecomposable pure-injective direct summand of $T^\ast$ is neg-isolated with respect to the definable class $\langle T^\star \rangle \subseteq {\rm cogen}(T^\ast)$ generated by $T^\ast$.
\end{cor}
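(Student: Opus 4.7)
The plan is to recognise this corollary as a direct application of Theorem~\ref{dualnegisol} once strict $\langle T\rangle$-atomicity of $T$ is in hand. Since \ref{dualnegisol} delivers the desired conclusion for an arbitrary strictly ${\cal D}$-atomic module $M$, the proof amounts to producing such a module, and the natural candidate here is $M=T$ itself.

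Concretely, I would set ${\cal D} = \langle T \rangle$, the definable subcategory generated by $T$. Proposition~\ref{siltstrDat} already states that the silting module $T$ is strictly ${\cal D}$-atomic (its proof uses $\operatorname{Ext}$-projectivity of $T$ in ${\rm Gen}(T)$ to realise $T$ as a summand of the strictly ${\cal D}$-atomic module given by \ref{pureepi}, combined with \ref{dsstrDat}). With this hypothesis verified, apply Theorem~\ref{dualnegisol} with $M=T$: the conclusion is that every indecomposable direct summand of $T^\ast$ is neg-isolated with respect to the theory of $T^\ast$.

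The only remaining point to check is that the language of \ref{dualnegisol} (``neg-isolated with respect to the theory of $T^\ast$'') matches the statement (``neg-isolated with respect to the definable class generated by $T^\ast$''). These coincide, because the pp-pairs closed on $T^\ast$ are exactly the pp-pairs closed on the definable subcategory $\langle T^\ast\rangle$ it generates, so the lattice of pp-types relevant to the theory of $T^\ast$ is the same as the lattice relevant to $\langle T^\ast\rangle$; and since $T^\ast$ is a dual module it is pure-injective, so its indecomposable direct summands are automatically pure-injective. I do not expect any real obstacle: the substantive work is entirely encapsulated in \ref{siltstrDat} and \ref{dualnegisol}, and the remaining argument is just the one-line composition ``$T$ is strictly $\langle T\rangle$-atomic, hence apply \ref{dualnegisol}''.
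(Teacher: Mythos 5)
Your proof is correct and matches the paper's argument exactly: the paper simply remarks that the corollary ``now follows from \ref{dualnegisol}'', relying on \ref{siltstrDat} to supply strict $\langle T\rangle$-atomicity of $T$, which is precisely the reasoning you spell out. The extra observation you make reconciling ``neg-isolated with respect to the theory of $T^\ast$'' with ``neg-isolated with respect to $\langle T^\ast\rangle$'' is also correct and implicit in the paper's conventions.
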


Since, \cite[1.2]{AHTT}, any tilting, hence any silting, module $T$ is finitely generated over its endomorphism ring, \ref{rdsbiend} applies to $T$.

\begin{cor} Let $T$ be a silting $R$-module and set $R_T$ to be its ring of definable scalars.  Then $R_T = {\rm Biend}(T_R)$ (as $R$-algebras).  
\end{cor}

If $T$ is a tilting module, then (\cite[2.1]{AHTT}) every module $M$ has a {\bf special} ${\rm Gen}(T)$-{\bf preenvelope}, that is, there is an exact sequence $0 \to M \xrightarrow{i} T_0 \to T_1 \to 0$ with $i$ a ${\rm Gen}(T)$-preenvelope of $M$ and $T_1 \, \in \, ^{\perp_1}{\rm Gen}(T)$, that is ${\rm Ext}^1(T_1,{\rm Gen}(T))=0$.

\begin{lemma}\label{specenv} \marginpar{specenv}  Suppose that $T$ is a tilting $R$-module and $M\in {\rm Mod}\mbox{-}R$ is such that ${\rm Ext}^1(M,{\rm Gen}(T))=0$.  Then there is an exact sequence $0 \to M \xrightarrow{i} T_0 \to T_1 \to 0$ with $T_0, T_1 \in {\rm Gen}(T)$, $i$ a ${\rm Gen}(T)$-preenvelope of $M$ and ${\rm Ext}^1(T_0,{\rm Gen}(T))=0 ={\rm Ext}^1(T_1,{\rm Gen}(T))$.  It follows that $T_0, T_1 \in {\rm Add}(T)$

Furthermore, in the case $M=R$, $T_0 \oplus T_1$ is a tilting module equivalent to $T$.
\end{lemma}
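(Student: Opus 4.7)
The plan is to start from the general special preenvelope that exists for tilting modules and refine it using the hypothesis on $M$. By the cited result \cite[2.1]{AHTT}, there is an exact sequence $0 \to M \xrightarrow{i} T_0 \to T_1 \to 0$ with $T_0 \in \mathrm{Gen}(T)$, $T_1 \in {}^{\perp_1}\mathrm{Gen}(T)$, and $i$ a $\mathrm{Gen}(T)$-preenvelope. Since $\mathrm{Gen}(T)$ is closed under quotients and $T_1$ is a quotient of $T_0 \in \mathrm{Gen}(T)$, we automatically have $T_1 \in \mathrm{Gen}(T)$ as well.

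The key observation is the standard identity $\mathrm{Gen}(T) \cap {}^{\perp_1}\mathrm{Gen}(T) = \mathrm{Add}(T)$ for a tilting module $T$, which immediately places $T_1 \in \mathrm{Add}(T)$. In particular $\mathrm{pd}(T_1) \leq 1$, so $\mathrm{Ext}^i(T_1, -) = 0$ for $i \geq 2$, and $\mathrm{Ext}^1(T_1, \mathrm{Gen}(T)) = 0$. For $T_0$, I would apply $\mathrm{Hom}(-,X)$ with $X \in \mathrm{Gen}(T)$ to the sequence, extracting
$$\mathrm{Ext}^1(T_1,X) \to \mathrm{Ext}^1(T_0,X) \to \mathrm{Ext}^1(M,X) \to \mathrm{Ext}^2(T_1,X),$$
where the two outer terms vanish by the above and the third vanishes by the standing hypothesis on $M$. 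Hence $\mathrm{Ext}^1(T_0, \mathrm{Gen}(T)) = 0$ and, since also $T_0 \in \mathrm{Gen}(T)$, the same identity gives $T_0 \in \mathrm{Add}(T)$.

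For the last assertion, specialising to $M = R$, I would verify the three clauses in the characterisation of tilting modules for $T' := T_0 \oplus T_1$. Projective dimension $\leq 1$ is inherited since $T', T_0, T_1 \in \mathrm{Add}(T)$ and $\mathrm{pd}(T) \leq 1$. Ext-self-vanishing, $\mathrm{Ext}^1(T', T'^{(\kappa)}) = 0$, follows because $T' \in \mathrm{Add}(T) \subseteq \mathrm{Gen}(T)$ while $T' \in {}^{\perp_1}\mathrm{Gen}(T)$ (using that $\mathrm{Gen}(T)$ is closed under coproducts). The required resolution $0 \to R \to T_0 \to T_1 \to 0$ is precisely the sequence we built with $T_0, T_1 \in \mathrm{Add}(T')$. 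Equivalence with $T$ reduces to showing $\mathrm{Gen}(T') = \mathrm{Gen}(T)$: the inclusion $\mathrm{Gen}(T') \subseteq \mathrm{Gen}(T)$ is clear since $T' \in \mathrm{Gen}(T)$, and the reverse holds because $\mathrm{Gen}(T') = (T')^{\perp_1}$ (by $T'$ being tilting) and $T \in (T')^{\perp_1}$, the latter because $T' \in \mathrm{Add}(T) \subseteq {}^{\perp_1}\mathrm{Gen}(T)$ and $T \in \mathrm{Gen}(T)$.

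The main obstacle I expect is a cosmetic one: making sure the identity $\mathrm{Gen}(T) \cap {}^{\perp_1}\mathrm{Gen}(T) = \mathrm{Add}(T)$ is cleanly invoked (or briefly justified via $\mathrm{Pres}(T) = \mathrm{Gen}(T)$ and a splitting argument), since the rest of the argument is a formal diagram chase through the $\mathrm{Ext}$ long exact sequence. Everything else is genuinely routine once the $\mathrm{Add}(T)$ characterisation is in hand.
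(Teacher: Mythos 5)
Your proof is correct and takes essentially the same route as the paper's: both start from the special $\mathrm{Gen}(T)$-preenvelope of \cite[2.1]{AHTT}, identify $T_1$ (and then $T_0$) as lying in $\mathrm{Gen}(T)\cap{}^{\perp_1}\mathrm{Gen}(T)=\mathrm{Add}(T)$, with the $\mathrm{Ext}$ long exact sequence doing the work for $T_0$. The paper simply cites \cite[1.2]{AHTT} and \cite[13.10(c), 13.18, 13.19]{GT} for the details you reconstruct explicitly.
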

\begin{proof}
The first statement follows from the proof of \cite[1.2]{AHTT}, alternatively see (the proof of) \cite[13.18]{GT}.  The fact that ${\rm Gen}(T) \cap \,^{\perp_1}{\rm Gen}(T) = {\rm Add}(T)$ is \cite[13.10(c)]{GT}.  The last comment is \cite[13.19]{GT}. 
\end{proof}

In the next section we focus on a special case.

\section{Modules of irrational slope}\label{secirrat} \marginpar{secirrat}

We suppose throughout Section \ref{secirrat} that $R$ is a tubular algebra.  For these algebras and their finite-dimensional modules see  \cite[Chpt.~5]{RinTame} or any of the references cited below.  Our eventual aim is to complete the description of the infinite-dimensional indecomposable pure-injective modules which was begun in \cite{HarThes}, \cite{HarPre} and continued in \cite{GreTub} and \cite{KusLak}.  The task which remains is to describe the modules of irrational slope.  Here we make a little progress in this direction.

We refer to \cite{HarPre}, \cite{ReiRin} for what we need on the modules and morphisms between them, and to \cite{AngKusAlg} for tilting and cotilting modules over these algebras.  We do recall that every finite-dimensional indecomposable module has a well-defined {\em slope}, which is a nonnegative rational number\footnote{except for those in certain components of preprojective or preinjective modules, but we can ignore these modules here} or $\infty$ and that there is only the zero morphism from a module of slope $r$ to a module of slope $s<r$.  We say that a direct sum of indecomposable modules all of slope $r$ is also of slope $r$.

This terminology extends to certain infinite-dimensional modules.  Let $r$ be a positive real.  Denote by ${\bf p}_r$ the finite-dimensional indecomposable modules of slope $<r$ and by ${\bf q}_r$ those of slope $>r$.  We will also use these notations for their ${\rm add}$-closures (i.e.~close under finite direct sums).  Let ${\cal C}_r = {\bf q}_r^{\perp_0}$, ${\cal B}_r = \, ^{\perp_0}{\bf p}_r$ and set ${\cal D}_r ={\cal B}_r \, \cap {\cal C}_r$.  This is the category of {\bf modules of slope} $r$ and it is a definable subcategory of ${\rm Mod}\mbox{-}R$.  It is the case that {\em every} indecomposable module, finite- or infinite-dimensional, has a slope \cite[13.1]{ReiRin}.  The categories ${\cal D}_r$ with $r$ rational are well understood but little is known in the case that $r$ is irrational.  Note that, if $r$ is irrational, the only finite-dimensional module in ${\cal D}_r$ is $0$.  The category ${\cal D}_r$ is closed under extensions:  if $0 \to D \to X \to D' \to 0$ is an exact sequence with $D, D' \in {\cal D}_r$, then we have $({\bf q}_r,X)=0 = (X, {\bf p}_r)$ and hence $X\in {\cal D}_r$.  In fact, we will see \ref{espes} below that, if $r$ is irrational, then every exact sequence in ${\cal D}_r$ is pure-exact.

\begin{rmk}  Every exact sequence being pure-exact is a property of the category of modules over a von Neumann regular ring but ${\cal D}_r$ is not an abelian category:  there are epimorphisms between modules in ${\cal D}_r$ whose kernel is not in ${\cal D}_r$ (and which are not the cokernel of any kernel in ${\cal D}_r$); similarly for some monomorphisms in ${\cal D}_r$.  But we do say something, see \ref{fgker} and \ref{kerinDr}, about the non-pure morphisms in ${\cal D}_r$. 
\end{rmk}

See \cite[\S\S 2, 3]{HarPre}, corrected and extended in \cite[\S 7]{GreTub}, for more about slopes and supports.  In particular we have the following.

\begin{prop}\label{slope} (\cite[3.4]{HarPre}\footnote{The reference for \cite[3.3]{HarPre}, which is used to prove this result, should read Lemma 13.4 (not Lemma 11) of Reiten and Ringel's paper.} Suppose that $M$ is a module of slope $r>0$.  Then, for every $\epsilon >0$, $M$ is a directed union of finite-dimensional submodules with slopes in the interval $(r-\epsilon, r]$ (indeed in $(r-\epsilon, r)$ in the case that $r$ is irrational).
\end{prop}

We say that a module $M$ is {\bf supported} on an interval $I$ of the extended real line if $M$ is a directed sum = directed union of finitely generated submodules whose indecomposable direct summands have slope in $I$.

Recall \cite[6.4]{AngKusAlg} that there is a unique to ${\rm Add}$-equivalence tilting module $T$ in ${\cal D}_r$ and a unique to ${\rm Prod}$-equivalence cotilting module $C$ in ${\cal D}_r$ and so (\cite{AHTT}, \cite{AngKusAlg}) ${\cal B}_r = {\rm Gen}(T) = {\rm Pres}(T)$, ${\cal C}_r = {\rm Cogen}(C) = {\rm Copres}(W)$, hence ${\cal D}_r = {\rm Gen}(T)\, \cap\, {\rm Copres}(C)$.  Also recall \cite{AHTT} that the partial tilting modules - the modules $T' \in {\rm Add}(T)$ - are ${\rm Ext}$-projectives in ${\cal D}_r$, meaning that ${\rm Ext}^1(T',{\cal D}_r)=0$ and hence that any exact sequence $0 \to D' \to D \to T' \to 0$ with $D', D \in {\cal D}_r$ splits.  Dually, the partial cotilting modules - the modules $C' \in {\rm Cogen}(C)$ - are Ext-injectives in ${\cal D}_r$:  ${\rm Ext}^1({\cal D}_r, C')=0$ and every exact sequence $0 \to C' \to D \to D'' \to 0$ with $D, D'' \in {\cal D}_r$ splits.  If $T$ is a tilting module, then $T^\ast = {\rm Hom}_k(T,k)$ is, \cite[3.4]{AngHrb}, a cotilting module for the dual definable category $({\cal D}_r)^{\rm d}$, which is (the duality takes an irrational cut on indecomposable right modules to an irrational cut on indecomposable left modules) the category of left $R$-modules of some irrational slope $r^\ast$.  So, by left/right symmetry, the cotilting module $C$ for ${\cal D}_r$ may be taken to be the dual (in this sense) module for some tilting left $R$-module which belongs to the category of left modules of slope $r^\ast$.

\begin{lemma}\label{tiltapprox} \marginpar{tiltapprox} Let $T$ be a tilting module of irrational slope $r$.  For every $A\in {\rm mod}\mbox{-}R$ supported on $(-\infty,r)$ there is an exact sequence $0 \to A \xrightarrow{i} T_0 \to T_1 \to 0$ with $T_0, T_1 \in {\rm Add}(T)$ and $i$ a ${\cal D}_r$-preenvelope of $A$.
\end{lemma}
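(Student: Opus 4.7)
The strategy is to apply Lemma \ref{specenv} and then observe that its conclusion already lives in ${\cal D}_r$. Since $T \in {\cal D}_r$ and ${\cal D}_r$ is a definable subcategory, every module in ${\rm Add}(T)$ is also in ${\cal D}_r \subseteq {\rm Gen}(T) = {\cal C}_r$. Consequently, if we can produce a short exact sequence $0 \to M \xrightarrow{i} T_0 \to T_1 \to 0$ with $T_0, T_1 \in {\rm Add}(T)$ and $i$ a ${\rm Gen}(T)$-preenvelope, then $i$ is automatically a ${\cal D}_r$-preenvelope:  any $g: M \to D$ with $D \in {\cal D}_r$ is in particular a morphism into ${\rm Gen}(T)$ and so factors through $i$.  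The whole statement thus reduces to verifying the hypothesis ${\rm Ext}^1(M, {\rm Gen}(T)) = 0$ of \ref{specenv}.

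To check this, write $M = \varinjlim_\alpha M_\alpha$ as a directed colimit of finite-dimensional submodules each of whose indecomposable summands has slope $< r$; thus each $M_\alpha$ is in ${\rm add}\, {\bf p}_r$.  For $X \in {\rm Gen}(T) = {\cal C}_r = \, ^{\perp_0}{\bf p}_r$ and any finite-dimensional non-projective indecomposable summand $N$ of some $M_\alpha$, the Auslander--Reiten formula gives ${\rm Ext}^1(N, X) \cong D{\rm Hom}(X, \tau N)$.  Because $\tau$ preserves slopes on the tubular families of $R$, $\tau N$ is again finite-dimensional of slope $< r$, i.e.\ $\tau N \in {\bf p}_r$, so ${\rm Hom}(X, \tau N) = 0$ and hence ${\rm Ext}^1(N, X) = 0$; for projective summands the vanishing is automatic.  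Consequently ${\rm Ext}^1(M_\alpha, X) = 0$ for every $\alpha$.

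It remains to pass from the finite-dimensional case to the colimit $M = \varinjlim_\alpha M_\alpha$.  The cleanest route is the pure-exact presentation $0 \to K \to \bigoplus_\alpha M_\alpha \to M \to 0$, in which $K$ is a directed union of finite-dimensional modules of slope $<r$ and so is itself supported on $(-\infty, r)$.  Applying ${\rm Hom}(-, X)$ and using ${\rm Ext}^1(\bigoplus_\alpha M_\alpha, X) = \prod_\alpha {\rm Ext}^1(M_\alpha, X) = 0$, the vanishing of ${\rm Ext}^1(M, X)$ reduces to the surjectivity of the restriction ${\rm Hom}(\bigoplus_\alpha M_\alpha, X) \to {\rm Hom}(K, X)$, equivalently to a $\varprojlim^1$-vanishing condition for the inverse system $({\rm Hom}(M_\alpha, X))_\alpha$.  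This colimit step is the main technical obstacle, and one expects to resolve it by exploiting further structure of ${\cal D}_r$ --- for instance the fact, recorded earlier in the paper, that every exact sequence in ${\cal D}_r$ is pure-exact, combined with pure-injectivity of an ultrapower of $X$.  Once ${\rm Ext}^1(M, {\rm Gen}(T)) = 0$ is in hand, \ref{specenv} supplies the required sequence with $T_0, T_1 \in {\rm Add}(T)$, and the opening reduction upgrades $i$ to the desired ${\cal D}_r$-preenvelope.
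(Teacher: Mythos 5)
Your opening reduction is correct and is exactly what the paper intends: since ${\cal D}_r \subseteq {\rm Gen}(T)$ and ${\rm Add}(T) \subseteq {\cal D}_r$, a ${\rm Gen}(T)$-preenvelope landing in ${\rm Add}(T)$ is automatically a ${\cal D}_r$-preenvelope, so everything reduces to verifying the hypothesis of Lemma~\ref{specenv}, namely ${\rm Ext}^1(M,{\rm Gen}(T))=0$. Your treatment of the finite-dimensional case via the Auslander--Reiten formula is also fine; the paper takes the vanishing ${\rm Ext}^1(A,-)=0$ on ${\cal D}_r$ (for $A$ of slope $<r$) as a known fact about tubular algebras without spelling it out.

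The genuine gap is precisely the step you flag as ``the main technical obstacle.'' You cannot pass from ${\rm Ext}^1(M_\alpha,X)=0$ to ${\rm Ext}^1(M,X)=0$ along a directed colimit by general nonsense: ${\rm Ext}^1$ does not commute with $\varinjlim$ in the first variable, and the $\varprojlim^1$-term in your Milnor-type sequence has no reason to vanish (nor is that sequence even available for uncountable index sets). The route via the presentation $0\to K\to\bigoplus_\alpha M_\alpha\to M\to 0$ just relocates the difficulty to the surjectivity of ${\rm Hom}(\bigoplus_\alpha M_\alpha,X)\to{\rm Hom}(K,X)$, which is the same kind of extension problem in disguise. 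The paper closes this gap with Eklof's Lemma (\cite[6.2]{GT}): a module supported on $(-\infty,r)$ is filtered --- i.e.\ admits a continuous well-ordered chain of submodules with finite-dimensional consecutive quotients of slope $<r$ --- and Eklof's Lemma says that if ${\rm Ext}^1(S,X)=0$ for all the filtration quotients $S$ then ${\rm Ext}^1(M,X)=0$. That is the tool you were missing: it trades the intractable $\varprojlim^1$ analysis for a clean transfinite induction along a filtration. With that in place, your argument goes through and coincides with the paper's.
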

\begin{proof}  We check the criterion of \ref{specenv} to show that there is such an exact sequence with $i$ a ${\rm Gen}(T)$-preenvelope of $A$ and hence, since $T_0, T_1 \in {\cal D}_r \subseteq {\rm Gen}(T)$, a ${\cal D}_r$-preenvelope of $A$.

First, we recall (see \cite[p. 697]{HarPre}) that ${\cal B}_r = {\bf p}_r^{\perp_1}$ so we have ${\rm Ext}^1(A, {\cal D}_r) =0$.  In particular ${\rm Ext}^1(A, T) =0$ and hence, since, by \cite[3.1.5]{RinTame}, every finite-dimensional module with positive rational slope has projective dimension 1, in particular ${\rm Ext}^2(A,-)=0$, we have ${\rm Ext}^1(A, {\rm Gen}(T)) =0$.  Therefore \ref{specenv} applies. 
\end{proof}

\begin{cor}\label{dualtiltr} \marginpar{dualtiltr}  Let $T$ be a tilting module of irrational slope $r$ and let $T^\ast$ be its dual, cotilting module, of irrational slope $r^\ast$.  Then every indecomposable direct summand of $T^\ast$ is neg-isolated with respect to ${\cal D}_{r^\ast}$.
\end{cor}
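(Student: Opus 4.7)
The plan is to invoke Theorem \ref{dualnegisol} applied to the silting module $T \in \mathcal{D}_r$, and then transfer the resulting neg-isolation from the definable hull of $T^\ast$ to the slope category $\mathcal{D}_{r^\ast}$.

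First I would verify that $T$ is strictly $\mathcal{D}_r$-atomic. Proposition \ref{siltstrDat} gives this with $\mathcal{D} = \mathrm{Gen}(T) = \mathcal{C}_r$; since $T \in \mathcal{D}_r \subseteq \mathcal{C}_r$, strict atomicity descends to the smaller definable subcategory (any pp formula that $\mathcal{C}_r$-generates a pp-type of a tuple in $T$ also $\mathcal{D}_r$-generates it, and the free-realisation property inside $\mathcal{D}_r$ is inherited by restriction from $\mathcal{C}_r$). Then Theorem \ref{dualnegisol} applied with $\mathcal{D} = \mathcal{D}_r$ and $M = T$ produces, for each indecomposable direct summand of $T^\ast$, a pp formula of the form $D\psi$ that neg-isolates its associated pp-type with respect to the theory of $T^\ast$, equivalently with respect to the definable category $\langle T^\ast \rangle$ generated by $T^\ast$.

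To finish, I would deduce neg-isolation with respect to $\mathcal{D}_{r^\ast}$. If $\mathcal{D}' \subseteq \mathcal{D}$ are definable and a pp-type $p$ is realised in $\mathcal{D}'$, then $\mathcal{D}$-neg-isolation of $p$ implies $\mathcal{D}'$-neg-isolation, because the collection of pp-types realised in $\mathcal{D}'$ is a subset of those realised in $\mathcal{D}$, so maximality among the larger family forces maximality among the smaller. It therefore suffices to check $\mathcal{D}_{r^\ast} \subseteq \langle T^\ast \rangle$. In the tubular setting this is the standard identification of the definable hull of a (large) cotilting module with its cotilting class: $\langle T^\ast \rangle = \mathrm{Cogen}(T^\ast) = \mathcal{B}_{r^\ast}$, and $\mathcal{D}_{r^\ast} = \mathcal{B}_{r^\ast} \cap \mathcal{C}_{r^\ast} \subseteq \mathcal{B}_{r^\ast}$, as required.

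The main obstacle is precisely this identification $\langle T^\ast \rangle = \mathcal{B}_{r^\ast}$, a general feature of large cotilting modules that would need a citation to \cite{AngKusAlg} or \cite{AHTT}; once that is in hand, the conclusion is immediate from the machinery already developed in the paper.
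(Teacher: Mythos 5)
Your route is essentially the paper's: apply Theorem \ref{dualnegisol} to $T$ to get neg-isolation of the indecomposable summands of $T^\ast$ with respect to $\langle T^\ast\rangle$, and then identify $\langle T^\ast\rangle$ with ${\cal D}_{r^\ast}$. The first two steps are correct. (The descent argument for strict atomicity is sound but unnecessary, since Proposition \ref{siltstrDat} already proves strict $\langle T\rangle$-atomicity and $\langle T\rangle = {\cal D}_r$ by Theorem \ref{nodefsub}; in fact what you are re-deriving in the first two steps is precisely Corollary \ref{cosiltnegisol}.) Your observation that neg-isolation in a definable category passes to a definable subcategory in which the relevant pp-type is still realised is also a valid, general point.

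The gap is in the final identification. You appeal to a ``standard identification'' $\langle T^\ast\rangle = {\rm Cogen}(T^\ast) = {\cal B}_{r^\ast}$; this is false, and visibly so here. Since $T^\ast$ lies in ${\cal D}_{r^\ast}$ and ${\cal D}_{r^\ast}$ is definable, $\langle T^\ast\rangle \subseteq {\cal D}_{r^\ast}$, which is \emph{strictly} contained in ${\cal B}_{r^\ast}$ --- any finite-dimensional module of slope $<r^\ast$ lies in ${\cal B}_{r^\ast}$ but not in ${\cal C}_{r^\ast}$, so not in ${\cal D}_{r^\ast}$. More generally, the cotilting class ${\rm Cogen}(C)$ of a cotilting module $C$ need not coincide with the definable subcategory $\langle C\rangle$ generated by $C$, and for a cotilting module concentrated at a single irrational slope it certainly does not. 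The inclusion you actually need, ${\cal D}_{r^\ast}\subseteq\langle T^\ast\rangle$ (indeed, equality), follows instead from Theorem \ref{nodefsub} (that is, \cite[8.5]{HarPre}): every nonzero module of slope $r^\ast$ generates all of ${\cal D}_{r^\ast}$ as a definable subcategory. Once you have that equality, there is no transfer of neg-isolation to perform --- the two categories coincide --- and the conclusion is immediate, exactly as in the paper's proof.
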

\begin{proof}  This is by \ref{cosiltnegisol} and since the definable subcategory generated by $T^\ast$ is, \cite[8.5]{HarPre}, all of ${\cal D}_{r^\ast}$.
\end{proof}

We know \cite[7.4, 7.5]{HarPre}, at least if $R$ is countable, that there are superdecomposable pure-injectives in ${\cal D}_{r^\ast}$ so it {\em might} be that $T^\ast$ has superdecomposable direct summands.

Reversing the roles of $r$ and $r^\ast$, we deduce the following (which can be obtained by other arguments, see \cite{KusLak}).

\begin{cor} If $r$ is an irrational then there is a cotilting module of slope $r$ all of whose indecomposable direct summands are neg-isolated in ${\cal D}_r$.
\end{cor}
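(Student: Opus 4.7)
The plan is to apply the preceding corollary with the roles of left and right modules interchanged, exploiting the left/right symmetry noted just before Lemma \ref{tiltapprox}. Since the opposite of a tubular algebra is again tubular, the whole setup---definable categories ${\cal D}_r$ of modules of irrational slope, existence of tilting and cotilting modules in them, and the $k$-duality $(-)^\ast = {\rm Hom}_k(-,k)$ exchanging the two---is valid symmetrically on both sides, with the induced bijection of irrational cuts $r \mapsto r^\ast$ being involutive (applying $(-)^\ast$ twice returns to the original side and recovers the original indecomposable summands up to isomorphism).

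First I would invoke the left-module analogue of \cite[6.4]{AngKusAlg} to produce a tilting left $R$-module $T'$ lying in the definable category of left modules of slope $r^\ast$. Then $C = (T')^\ast$ is, by \cite[3.4]{AngHrb}, a cotilting right $R$-module, and by the involutivity its slope is $(r^\ast)^\ast = r$. Finally, the previous corollary, applied to the tilting module $T'$ (that is, with $r$ replaced by $r^\ast$ and left/right swapped), yields that every indecomposable direct summand of $C = (T')^\ast$ is neg-isolated with respect to ${\cal D}_{(r^\ast)^\ast} = {\cal D}_r$. Thus $C$ is the cotilting module of slope $r$ sought.

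The only point requiring a little care is the involutivity of $r \mapsto r^\ast$, which is immediate from the fact that $(-)^{\ast\ast}$ is naturally isomorphic to the identity on ${\rm mod}\mbox{-}R$, so the induced bijection between irrational cuts on right and on left modules is its own inverse. No substantial obstacle is expected, since the statement is essentially a direct left/right-symmetric translation of the previous corollary via $k$-duality.
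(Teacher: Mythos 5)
Your proposal is correct and follows exactly the paper's intended argument: the paper's ``proof'' is the single sentence ``Reversing the roles of $r$ and $r^\ast$, we deduce the following,'' and your write-up is simply that reversal carried out in full---pass to the opposite (still tubular) algebra, apply the left-module analogue of the previous corollary, and use that $(-)^{\ast\ast} \cong \mathrm{id}$ forces $(r^\ast)^\ast = r$. Nothing to add; the only thing the paper includes that you omit is the parenthetical remark that this fact was already known by other arguments.
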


Indeed, applying \cite[3.5]{MehPre}, there is such a cotilting module with no superdecomposable direct summand, hence which is the pure-injective hull of a direct sum of neg-isolated (in particular, indecomposable) pure-injectives.

\subsection{Pp formulas near an irrational and definable closures}

The first parts of the next result come from the thesis \cite{HarThes} of Harland, see \cite[3.2]{HarPre}.  In fact, the result in \cite{HarPre} is for formulas in one free variable/single elements, but the proof works just as well for finite tuples (the change is essentially notational).

\begin{theorem}\label{localpp} \marginpar{localpp}  Let $r$ be a positive irrational and let $\phi(\overline{x})$ be a pp formula for $R$-modules.  Then there is a pp formula $\phi'$, a free realisation $(C',\overline{c}')$ of $\phi'$ and $\epsilon >0$ such that:

\noindent  (1) $C' \in {\rm add}({\bf p}_{r-\epsilon})$;

\noindent  (2) $C'/\langle \overline{c}'\rangle \in {\rm add}({\bf q}_{r+\epsilon})$;

\noindent  (3) $\phi(X) = \phi'(X)$ for every $X$ supported on $(r-\epsilon, r+\epsilon)$

\noindent  (4) each morphism $C'\to X$ with $X$ supported on $(r-\epsilon, r+\epsilon)$ is determined by $f\overline{c}'$

\noindent  (5) we may choose the formula $\phi'(\overline{x})$, say $\exists \overline{y} \, \theta'(\overline{x}, \overline{y})$ with $\theta'$ quantifier-free, such that there is a unique tuple $\overline{d}'$ from $C'$ such that $C'\models \theta'(\overline{c}', \overline{d}')$;

\noindent  (6) with $\phi'$, being $\exists \overline{y} \, \theta'(\overline{x}, \overline{y})$, chosen as in (5) above, if $X$ is supported on $(r-\epsilon, r+\epsilon)$ and $\overline{a} \in \phi(X) =\phi'(X)$, then there is a unique tuple $\overline{b}$ from $X$ with $(\overline{a}, \overline{b}) \in \theta'(X)$.
\end{theorem}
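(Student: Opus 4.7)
My plan is to cite Harland's proof of \cite[3.2]{HarPre} for the single-variable case and to verify that nothing in his argument depends essentially on the single-variable hypothesis, so that the passage to tuples is purely notational (consistent with the remark accompanying the statement). To make the plan concrete, here is how I would organise the verification.

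Start with any free realisation $(C_0, \overline{c}_0)$ of $\phi$ in ${\rm mod}\mbox{-}R$ (existence by \cite[1.2.14]{PreNBK}). Since $R$ is tubular, $C_0$ decomposes as a finite direct sum of indecomposables of specific rational slopes, and irrationality of $r$ lets me fix $\epsilon > 0$ so that none of the slopes arising in the construction lies in $[r-\epsilon, r+\epsilon]$, with the option to shrink $\epsilon$ further later on. Split $C_0 = C_- \oplus C_+$ according to slope $<r$ versus $>r$, with $\overline{c}_-$ the component of $\overline{c}_0$ in $C_-$. For any $X$ supported on $(r-\epsilon, r+\epsilon)$ we have ${\rm Hom}(C_+, X) = 0$ (no non-zero morphism from a higher-slope module to a lower-slope one), so every morphism $C_0 \to X$ vanishes on $C_+$ and $\phi(X) = \{\,f\overline{c}_- : f\colon C_- \to X\,\}$. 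Thus the pp formula freely realised by $(C_-, \overline{c}_-)$ is a first candidate satisfying (1) and the equality in (3).

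The main technical step is to modify $(C_-, \overline{c}_-)$ into a pair $(C', \overline{c}')$ preserving $\phi(X)$ on supported $X$ and additionally satisfying (2). I would first record that (2) $\Leftrightarrow$ (4): if $C'/\langle \overline{c}'\rangle \in {\rm add}({\bf q}_{r+\epsilon})$, then any morphism $C' \to X$ vanishing on $\overline{c}'$ factors through the quotient and must therefore be zero, so morphisms out of $C'$ into supported $X$ are determined by the image of $\overline{c}'$; conversely, applying (4) with $X$ the low-slope part of $C'/\langle \overline{c}'\rangle$ forces that part to vanish. To realise (2) I would follow Harland in using a pullback/cosyzygy construction internal to the tubular category to replace $C_-$ by a module $C' \in {\rm add}({\bf p}_{r-\epsilon})$ whose quotient by $\langle \overline{c}'\rangle$ has only summands of slope $>r+\epsilon$, while the morphism set to any supported $X$ is unaffected. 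The main obstacle is precisely this step: naive attempts -- for instance taking $C'$ to be the kernel of the composite $C_- \twoheadrightarrow C_-/\langle \overline{c}_-\rangle \twoheadrightarrow Q_{\rm low}$, where $Q_{\rm low}$ is the slope $<r$ summand of the quotient -- need non-trivial Ext-vanishing between slope strata that is not automatic in the tubular setting, and it is exactly here that Harland's careful use of the tube structure and of slope-restricted approximations is essential.

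For (5) and (6), extend $\overline{c}'$ by a tuple $\overline{d}'$ so that $\overline{c}'\,\overline{d}'$ is a generating set of $C'$ as an $R$-module, and let $\theta'(\overline{x},\overline{y})$ be the conjunction of the finitely many defining module relations on these generators, so that $\phi'(\overline{x}) = \exists \overline{y}\, \theta'(\overline{x},\overline{y})$. Uniqueness of $\overline{d}'$ in (5) then follows because any alternative tuple $\overline{d}''$ with $C' \models \theta'(\overline{c}', \overline{d}'')$ would arise from a self-morphism of $C'$ fixing $\overline{c}'$, which by (4) must be the identity. Property (6) is the same uniqueness transported along the morphism $C' \to X$ which, again by (4), is uniquely determined by $\overline{a} = f\overline{c}'$, forcing $\overline{b} = f\overline{d}'$.
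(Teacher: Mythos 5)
Your proposal follows essentially the same route as the paper: defer items (1)--(3) to Harland's argument in \cite[3.2]{HarPre} (with the passage from single elements to tuples being notational), and derive (4)--(6) from the observation that a morphism out of $C'$ vanishing on $\overline{c}'$ must factor through $C'/\langle\overline{c}'\rangle$, whose slopes lie in $[r+\epsilon,\infty)$ and are therefore incompatible with the target. Your choice of $\theta'$ as the defining relations of a generating tuple $\overline{c}'\overline{d}'$ is equivalent to the paper's choice of a generator of ${\rm pp}^{C'}(\overline{c}'\overline{d})$, since for a finitely presented module generated by a finite tuple the pp-type of that tuple is generated by its quantifier-free defining relations. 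The extra preamble you supply --- splitting a free realisation $C_0 = C_- \oplus C_+$ by slope and noting ${\rm Hom}(C_+,X)=0$ for $X$ supported near $r$ --- is a sound account of the setup that the paper leaves implicit in its citation, and you correctly locate the genuine difficulty (arranging (2)) inside Harland's argument.

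One small slip: in part (5) you conclude that a self-morphism $g$ of $C'$ fixing $\overline{c}'$ is the identity ``by (4),'' but (4) as stated concerns morphisms $C'\to X$ with $X$ supported on $(r-\epsilon,r+\epsilon)$, whereas $C'$ itself is supported on $(-\infty,r-\epsilon)$, so (4) does not literally apply. The intended argument is correct --- $g - 1$ vanishes on $\overline{c}'$, hence factors through $C'/\langle\overline{c}'\rangle$, and a morphism from a module with slopes $\geq r+\epsilon$ to one with slopes $<r-\epsilon$ is zero --- but you should invoke the slope-incompatibility fact directly (as the paper does with ``a contradiction as above'') rather than cite item (4), which is stated only for a smaller class of targets. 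The same caution applies to the converse implication (4) $\Rightarrow$ (2) you sketch, where the ``low-slope part'' of $C'/\langle\overline{c}'\rangle$ need not be supported on $(r-\epsilon,r+\epsilon)$ either.
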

\begin{proof} (1)-(3) One just follows through the proof of \cite[3.2]{HarPre}, checking that it works for $n$-tuples in place of elements.

(4)  Suppose that $f, g:C' \to X$ are such that $f\overline{c}' = g\overline{c}'$.  Then $f-g$ factors through $C/\langle \overline{c}'\rangle$ which is supported on $[r+\epsilon, \infty)$, hence $f-g=0$.

(5)  Having made an initial choice of $\phi'$ being, say, $\exists \overline{y} \, \theta''(\overline{x}, \overline{y})$, choose $\overline{d}$ from $C'$ such that $C'\models \theta''(\overline{c}', \overline{d})$, then just replace $\theta''$ by a pp formula $\theta'$ which generates the pp-type of $\overline{c}' \overline{d}$ in $C'$ (using that the pp-type of any finite tuple in a finitely presented module is finitely generated), so we have $C'\models \theta'(\overline{c}', \overline{d})$.

Then, if there were another witness in $C'$ to the existential quantifiers in $\exists \overline{y} \, \theta'(\overline{c}', \overline{y})$, say $C'\models \theta'(\overline{c}', \overline{e})$, there would be $f:C'\to C'$ with $f\overline{c}' = \overline{c}'$ and $f\overline{d} = \overline{e}$.  But then $1-f:C'\to C'$ would factor through $C'/\langle \overline{c}' \rangle$, a contradiction as above.

(6)  We have that if $X\models \theta'(\overline{a}, \overline{b})$ and $X\models \theta'(\overline{a}, \overline{b}')$, then there are morphisms $f,f':C' \to X$ with $f:\overline{c}' \overline{d} \mapsto \overline{a} \overline{b}$ (where $\overline{d}$ is as in part 5) and $f': \overline{c}' \overline{d} \mapsto \overline{a} \overline{b}'$.  Then $f-f'$ factors through $C'/\langle \overline{c}' \rangle$ and so, as before, must be the zero map and $\overline{b} = \overline{b}'$, as claimed.
\end{proof}

Note that (6) says that, given a pp formula, there is a pp formula to which it is equivalent on every module supported near $r$ and which has, on any such module, unique witnesses to its existential quantifiers.

\vspace{4pt}

We now show that, if $D$ is a module of irrational slope $r$ and $\overline{a}$ is a tuple from $D$, then the pp-type of $\overline{a}$ is determined, within the category ${\cal D}_r$, by its pp-type in its definable closure, ${\rm dcl}^D(\overline{a})$, in $D$.

Recall, see Section \ref{secmodth}, that the {\bf definable closure}, ${\rm dcl}^D(A)$ or ${\rm dcl}^D(\overline{a})$ of a subset $A$ of, or tuple $\overline{a}$ in, $D$ means the set of elements $b\in D$ which are pp-definable in $D$ over $\overline{a}$.  This is a submodule of $D$.  Also, just from the definition of definable closure and the fact that morphisms preserve pp formulas, if $f,g:D \to D' \in {\cal D}_r$ agree on $\overline{a}$, then they agree on ${\rm dcl}^D(\overline{a})$.

\begin{cor}\label{ppdcl} \marginpar{ppdcl}  If $D\in {\cal D}_r$ and $\overline{a}$ is from $D$, then ${\rm pp}^D(\overline{a})$ is generated, modulo the definable category generated by ${\cal D}_r$, by ${\rm pp}^{{\rm dcl}^D(\overline{a})}(\overline{a})$.  That is, ${\rm pp}^D(\overline{a}) = _{{\cal D}_r} \, {\rm pp}^{{\rm dcl}^D(\overline{a})} (\overline{a})$.
\end{cor}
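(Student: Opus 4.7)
The plan is to invoke Theorem \ref{localpp}, specifically clauses (5) and (6), to replace an arbitrary $\phi \in {\rm pp}^D(\overline{a})$ by a formula with a unique existential witness. Given such $\phi$, I choose $\phi'(\overline{x}) = \exists \overline{y} \, \theta'(\overline{x}, \overline{y})$ with $\theta'$ quantifier-free, and some $\epsilon > 0$, so that $\phi$ and $\phi'$ have the same solution set on every module supported on $(r-\epsilon, r+\epsilon)$, and so that on every such module any $\overline{a}$ satisfying $\phi$ has a unique witness $\overline{b}$ with $\theta'(\overline{a}, \overline{b})$.

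The next step is to push these conclusions from modules supported near $r$ to our $D \in {\cal D}_r$. The equivalence $\phi \equiv \phi'$ is the closure of the pp-pairs $\phi/(\phi \wedge \phi')$ and $\phi'/(\phi \wedge \phi')$, and the uniqueness of the witness is, via the usual difference-of-solutions argument (recalling that $\theta'$ is quantifier-free, hence a conjunction of $R$-linear equations), the closure of the pp-pair $\theta'(\overline{0}, \overline{y})/(\overline{y} = \overline{0})$. All three pairs close on every module supported on $(r-\epsilon, r+\epsilon)$, in particular on every finite-dimensional module of slope $r$; since ${\cal D}_r$ is the definable subcategory generated by such modules (cf.\ the tilting module $T$ of slope $r$, which is a direct limit of finite-dimensional modules of slope $r$), the closures propagate to all of ${\cal D}_r$ and hence to $D$.

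It follows that $D \models \phi'(\overline{a})$ and the witness $\overline{b}$ is unique in $D$. Uniqueness forces each component $b_i$ to be pp-definable over $\overline{a}$ in $D$, via the formula $\exists \overline{y}\,(\theta'(\overline{x}, \overline{y}) \wedge y_i = z)$, and so $\overline{b}$ lies entirely in ${\rm dcl}^D(\overline{a})$. Because $\theta'$ is quantifier-free and ${\rm dcl}^D(\overline{a})$ is a submodule of $D$ containing both $\overline{a}$ and $\overline{b}$, those equations already hold in ${\rm dcl}^D(\overline{a})$; hence ${\rm dcl}^D(\overline{a}) \models \phi'(\overline{a})$ and so $\phi' \in {\rm pp}^{{\rm dcl}^D(\overline{a})}(\overline{a})$. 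Combined with $\phi' \leq_{{\cal D}_r} \phi$ (coming from the first transferred closure), this exhibits $\phi$ as a ${\cal D}_r$-consequence of the pp-type of $\overline{a}$ in ${\rm dcl}^D(\overline{a})$, as required.

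The main obstacle lies in the transfer step: a typical $D \in {\cal D}_r$, such as an indecomposable pure-injective of irrational slope $r$, need not literally be supported on $(r-\epsilon, r+\epsilon)$ in the sense used in Theorem \ref{localpp}, so that theorem does not apply to $D$ directly. One must argue that the relevant pp-pair closures, being valid on all finite-dimensional modules of slope $r$, are inherited by the definable subcategory they generate, and then invoke the description of ${\cal D}_r$ as that generated subcategory; the subtlety is purely to identify uniqueness of witnesses as a pp-pair closure rather than a first-order uniqueness.
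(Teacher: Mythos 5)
Your overall plan --- apply Theorem \ref{localpp}(5),(6) to get a formula $\phi'$ with unique quantifier witnesses, conclude that the witness tuple lies in ${\rm dcl}^D(\overline{a})$, and hence that ${\rm dcl}^D(\overline{a})\models\phi'(\overline{a})$ --- is exactly the paper's argument, and the reformulation of ``$\phi'$ has a unique witness'' as closure of the pp-pair $\theta'(\overline{0},\overline{y})/(\overline{y}=\overline{0})$ is a correct and pleasant way to phrase it.

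The flaw is in the transfer step, and it is a real one: you propagate the closures from modules supported on $(r-\epsilon,r+\epsilon)$ to ${\cal D}_r$ by asserting that ${\cal D}_r$ is generated by finite-dimensional modules of slope $r$, and that the tilting module $T$ is a direct limit of finite-dimensional modules of slope $r$. Neither assertion is true. For a tubular algebra, every finite-dimensional indecomposable has \emph{rational} (or infinite) slope, so for irrational $r$ there are \emph{no} finite-dimensional modules of slope $r$ whatsoever; the tilting module $T$ of slope $r$ is a directed union of finite-dimensional modules of slopes $<r$, not of slope $r$. So the class you are invoking to generate ${\cal D}_r$ is empty, and the argument collapses.

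What actually fills the gap is a fact about tubular algebras which the paper uses implicitly and draws from the background in \cite{HarPre} and \cite{ReiRin}: a module of irrational slope $r$ is supported on $(r-\epsilon,r+\epsilon)$ for \emph{every} $\epsilon>0$ (its finite-dimensional submodules have indecomposable summands of slope approaching $r$ from below). Granting that, the paper simply applies \ref{localpp}(3),(6) directly to $D$ with no transfer step. If you want to keep your pp-pair formulation, the correct repair is to observe that it suffices to exhibit \emph{one} nonzero module of slope $r$ that is supported on $(r-\epsilon,r+\epsilon)$ and then invoke \ref{nodefsub} (which says $\langle M\rangle={\cal D}_r$ for any nonzero $M\in{\cal D}_r$) to conclude the three pp-pair closures hold throughout ${\cal D}_r$; but that one module is again furnished by the same structural fact, so you have not genuinely avoided it. Your instinct that ``$D$ is supported near $r$'' requires justification was sound; the proposed workaround just does not work as stated.

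Two smaller remarks. First, the uniqueness-as-pp-pair translation needs the parenthetical that one only uses the implication ``$\theta'(\overline{0},D)=0$ $\Rightarrow$ uniqueness of witnesses''; the converse can fail when $\phi(X)=0$, though this does not affect your argument. Second, once $\overline{b}\in{\rm dcl}^D(\overline{a})$ is established, your observation that $\theta'$ being quantifier-free means the equations already hold in the submodule ${\rm dcl}^D(\overline{a})$ is exactly the point the paper makes, so the conclusion ${\rm pp}^D(\overline{a})=_{{\cal D}_r}{\rm pp}^{{\rm dcl}^D(\overline{a})}(\overline{a})$ follows as you say.
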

\begin{proof}  Suppose that $D\models \phi(\overline{a})$.  Choose $\phi'$ as in \ref{localpp}(5); say $\phi'(\overline{x})$ is $\exists \overline{z} \, \theta(\overline{x}, \overline{z})$.  So $D \models \phi'(\overline{a})$; say $D\models \theta(\overline{a}, \overline{b})$.  By \ref{localpp}(6), $\overline{b}$ is the unique solution to $\theta(\overline{a}, \overline{z})$ in $D$, so each component of the tuple $\overline{b}$ is definable in $D$ over $\overline{a}$.  Hence ${\rm dcl}^D(\overline{a}) \models \phi'(\overline{a})$ and so $\phi' \in {\rm pp}^{{\rm dcl}^D(\overline{a})} (\overline{a})$.  But $\phi$ and $\phi'$ are equivalent modulo the definable category generated by ${\cal D}_r$, as required.
\end{proof}

Note, see the example below, that this does {\em not} imply that the inclusion of ${\rm dcl}^D(\overline{a})$ in $D$ is pure, nor that ${\rm dcl}^D(\overline{a})$ is in ${\cal D}_r$.  That is, if $\overline{a}$ satisfies some pp formula $\phi$ in $D\in {\cal D}_r$, it need not be the case that there will be witnesses to the existential quantifiers of $\phi$ which are definable over $\overline{a}$; rather, there is some pp formula $\phi'$ with $\phi'(D) =\phi(D)$ for which there are definable-over-$\overline{a}$ witnesses to any existential quantifiers that $\phi'$ may have.

In particular, consider the case $M=R$ and a corresponding exact sequence $0 \to R \xrightarrow{f} T_0 \to T_1 \to 0$ as in \ref{tiltapprox}.  Set $a=f1$.  Then the pp-type of $a$ in $T_0$ is equivalent (by \ref{Denvpp}), modulo $\langle {\cal D}_r \rangle$, to the formula $x=x$ which generates ${\rm pp}^R(1)$.  Thus every formula $\phi$ such that $T_0 \models \phi(a)$ is equivalent, modulo the theory of ${\cal D}_r$, to $x=x$.  But certainly there will be such formulas which are not quantifier-free and which are not themselves witnessed in the definable closure (which by \ref{dcltriv} below is $aR$) of $a$ in $T_0$ - rather each is ${\cal D}_r$-equivalent to a formula (in this case $x=x$) which is so witnessed.

\begin{lemma}\label{dcltriv} \marginpar{dcltriv} Suppose that the module $M$ is supported on $(-\infty, r-\eta)$ for some $\eta >0$ and take an exact sequence $0 \to M \to T_0 \xrightarrow{p} T_1 \to 0$ with $T_0, T_1$ both of slope $r$.  Then ${\rm dcl}^{T_0}(M) = M$.
\end{lemma}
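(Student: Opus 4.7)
The plan is to reduce the lemma to the principle observed in the paragraph just before Corollary~\ref{ppdcl}: if $D, D' \in {\cal D}_r$ and two morphisms $f, g : D \to D'$ agree on a tuple $\overline{a}$, they automatically agree on ${\rm dcl}^D(\overline{a})$. I will apply this to $D = T_0$, $D' = T_1$, and the pair of morphisms $p$ and $0 : T_0 \to T_1$.

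In detail, the inclusion $M \subseteq {\rm dcl}^{T_0}(M)$ is trivial, since each $m \in M$ is pp-definable over itself via the formula $y = x$. For the reverse inclusion, let $b \in {\rm dcl}^{T_0}(M)$, so $b \in {\rm dcl}^{T_0}(\overline{a})$ for some finite tuple $\overline{a}$ from $M$. Because $\overline{a} \in M = \ker p$, the two morphisms $p$ and $0$ both send $\overline{a}$ to $\overline{0}$; and both $T_0$ and $T_1$ lie in ${\cal D}_r$ by hypothesis. Applying the cited principle, $p$ and $0$ agree on ${\rm dcl}^{T_0}(\overline{a})$, so $p(b) = 0$, whence $b \in \ker(p) = M$.

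The real work is in justifying the cited principle in the strength required here. If $\psi(\overline{x}, y)$ is a pp formula witnessing $b \in {\rm dcl}^{T_0}(\overline{a})$, the definition of definable closure in $T_0$ only supplies $\psi(\overline{0}, T_0) = 0$; but to conclude $p(b) = 0(b)$ one needs $\psi(\overline{0}, T_1) = 0$, since $p(b) - 0 \in \psi(\overline{0}, T_1)$. The remedy is to invoke Theorem~\ref{localpp} and replace $\psi$ by a ${\cal D}_r$-equivalent formula $\psi'$ with the unique-witness property of parts (5)-(6). The finite-dimensional free realisation produced by \ref{localpp} has slope strictly below $r$, and the hypothesis that $M$ is supported on $(-\infty, r - \eta)$ is precisely what makes the slope bookkeeping compatible between the free realisation and $M$. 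With this refined $\psi'$ in hand, the vanishing $\psi'(\overline{0}, T_0) = 0$ propagates to $\psi'(\overline{0}, D) = 0$ uniformly for $D \in {\cal D}_r$, and in particular for $D = T_1$; the high-level argument then goes through.

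I expect the technical point requiring the most care is exactly this replacement step: tracking how the uniqueness of existential witnesses in the \ref{localpp}-refinement of $\psi$ forces the desired uniform vanishing on ${\cal D}_r$, and verifying that the interval of support for $M$ and the $\epsilon$ produced by \ref{localpp} line up. Once that is established, the core model-theoretic conclusion is immediate from comparing $p$ and the zero morphism on ${\rm dcl}^{T_0}(\overline{a})$.
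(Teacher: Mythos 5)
Your high-level structure is sound, and you have correctly isolated the real technical content of the lemma: the definition of $b\in{\rm dcl}^{T_0}(\overline{a})$ only gives $\psi(\overline{0},T_0)=0$, whereas to conclude $pb=0$ one needs $\psi(\overline{0},T_1)=0$. That is precisely the point, and it is good that you spotted it rather than waving past it.

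The gap is in the remedy. You propose to supply the transfer $\psi(\overline{0},T_0)=0\Rightarrow\psi(\overline{0},T_1)=0$ by invoking Theorem~\ref{localpp} and replacing $\psi$ by a $\mathcal{D}_r$-equivalent refinement $\psi'$ with the unique-witness property. But the unique-witness property of \ref{localpp}(5)--(6) concerns the \emph{existentially quantified} variables of $\psi'$, not the free variable $y$: it says nothing about the solution set $\psi'(\overline{0},D)$ for $D\in\mathcal{D}_r$. You assert that ``the vanishing $\psi'(\overline{0},T_0)=0$ propagates to $\psi'(\overline{0},D)=0$ uniformly for $D\in\mathcal{D}_r$'' but do not (and, I believe, cannot from \ref{localpp} alone) derive this. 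As written, the proof does not close.

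The correct and much shorter tool is Theorem~\ref{nodefsub}: every nonzero module in $\mathcal{D}_r$ opens the same pp-pairs. Now observe that $\rho(\overline{0},y)\geq(y=0)$ is a pp-pair, and the condition $\rho(\overline{0},T_0)=0$ is exactly the statement that this pair is closed on $T_0$. Since $T_1\neq 0$ and $T_1\in\mathcal{D}_r$, by \ref{nodefsub} the same pair is closed on $T_1$, i.e.\ $\rho(\overline{0},T_1)=0$. Now the paper argues directly, without passing through the principle preceding Corollary~\ref{ppdcl}: from $T_0\models\rho(\overline{a},b)$ with $\overline{a}$ from $M=\ker p$, apply $p$ to get $T_1\models\rho(\overline{0},pb)$, hence $pb=0$ and $b\in M$. (Note also that the very same observation is what justifies the ``agree on $\overline{a}$ $\Rightarrow$ agree on ${\rm dcl}$'' principle you cite, so one needs \ref{nodefsub} in either formulation; your route through that principle is a harmless detour once the transfer is supplied correctly, but \ref{localpp} is neither the right tool nor the one the paper uses here.)

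Incidentally, the hypothesis that $M$ is supported on $(-\infty,r-\eta)$ is needed only to guarantee such an exact sequence exists and that $M\cap{\cal D}_r=\{0\}$; it plays no role in the ``slope bookkeeping'' you anticipated.
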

\begin{proof}  Suppose that $b\in {\rm dcl}^{T_0}(M)$, say $T_0 \models \rho(\overline{a}, b)$ for some pp formula $\rho$ with $\rho(\overline{0}, T_0) =0$ and with $\overline{a}$ from $M$.  Then $T_1 \models \rho(\overline{0}, pb)$ and so, since $T_1$ and $T_0$ generate the same definable category, see \ref{nodefsub} below, we deduce that $pb =0$ and $b\in M$, as claimed.
\end{proof}

\subsection{Purity in ${\cal D}_r$}

We have already referred to the fact that the category ${\cal D}_r$ has no non-zero proper definable subcategory.

\begin{theorem}\label{nodefsub} \marginpar{nodefsub} (\cite[8.5]{HarPre})    If $M, N\in {\cal D}_r$ are nonzero, then $M$ and $N$ are elementarily equivalent, in particular they open the same pp-pairs.  Hence, if $M\in {\cal D}_r$ is nonzero, then the definable subcategory $\langle M \rangle$ of ${\rm Mod}\mbox{-}R$ generated by $M$ is ${\cal D}_r$.
\end{theorem}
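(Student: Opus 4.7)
The plan is to reduce both conclusions to a single statement: for every nonzero $M \in {\cal D}_r$, the definable subcategory $\langle M \rangle$ coincides with ${\cal D}_r$. This equivalence is standard additive model theory (two modules in a common definable category are elementarily equivalent iff they open the same pp-pairs iff they generate the same definable subcategory), so the whole theorem becomes the assertion that ${\cal D}_r$ admits no proper nonzero definable subcategory. I would argue this contrapositively: assume there is a pp-pair $\phi(\overline{x}) \geq \psi(\overline{x})$ closed on some nonzero $M \in {\cal D}_r$ but opened on some other $N \in {\cal D}_r$ by a witness $\overline{b} \in \phi(N) \setminus \psi(N)$, and then derive a contradiction by producing an analogous witness in $M$.

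The first real step is to apply Theorem \ref{localpp} to both $\phi$ and $\psi$, shrinking to a common $\epsilon > 0$. This replaces them by pp formulas $\phi', \psi'$ which agree with $\phi, \psi$ on every module supported in $(r-\epsilon, r+\epsilon)$, in particular on every module in ${\cal D}_r$, and which admit well-behaved free realisations $(C_\phi, \overline{c}_\phi)$ and $(C_\psi, \overline{c}_\psi)$ with $C_\phi, C_\psi \in \mathrm{add}(\mathbf{p}_{r-\epsilon})$ and quotients by the generating tuples in $\mathrm{add}(\mathbf{q}_{r+\epsilon})$. The existence of the witness $\overline{b}$ in $N$ is then equivalent to the existence of a morphism $g : C_\phi \to N$ with $g\overline{c}_\phi = \overline{b}$ that does not lift along the canonical factoring map $C_\phi \to C_\psi$ (the one taking $\overline{c}_\phi$ to $\overline{c}_\psi$ coming from $\phi' \geq \psi'$) composed with any $C_\psi \to N$. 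Closure of the pair on $M$ means that every morphism $C_\phi \to M$ does factor in this way.

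The crux, and the main obstacle, is to transport the unliftable morphism $g: C_\phi \to N$ to an unliftable morphism $g' : C_\phi \to M$. This is exactly the content of \cite[8.5]{HarPre} and is where the specific structural input of tubular algebras enters: for irrational $r$, there is no countable collection of tubes sitting at slope $r$, so every nonzero module of slope $r$ behaves ``homogeneously'' toward objects supported away from $r$. Concretely, I would try to exploit the tilting module $T$ of slope $r$ from \ref{tiltapprox}, which generates ${\cal D}_r$ as a definable subcategory, together with the fact (announced as \ref{espes}) that every exact sequence in ${\cal D}_r$ is pure-exact, to show that the full subcategory of $R$-linear maps from $C_\phi$ to objects of ${\cal D}_r$, modulo the ideal generated by factoring through $C_\psi$, does not distinguish between nonzero modules of slope $r$. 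Once such a uniformity is in hand, the factoring behavior of $g$ and $g'$ must coincide, and closure of $\phi'/\psi'$ on $M$ forces closure on $N$, the desired contradiction.

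The first two paragraphs are essentially book-keeping; the substance lives in the third, and the honest statement is that it reduces elementary equivalence across ${\cal D}_r$ to the tube-geometric fact that modules of irrational slope form a ``homogeneous'' class under the non-pure morphisms discussed at the start of this section.
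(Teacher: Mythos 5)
The paper offers \emph{no proof} of \ref{nodefsub}; it is stated verbatim with the citation \cite[8.5]{HarPre} and the reader is referred there. So there is nothing internal to compare your proposal against, and it must stand on its own.

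Your first two paragraphs are a sound reduction: translating the two conclusions into the single assertion that ${\cal D}_r$ admits no proper nonzero definable subcategory is standard, and the use of \ref{localpp} to replace the pp-pair $\phi/\psi$ with $\phi'/\psi'$ so that witnesses correspond to morphisms $C_\phi \to N$ that do or do not factor through the canonical $C_\phi \to C_\psi$ (together with the rigidity given by \ref{localpp}(4)) is exactly the right framework. The gap is that the third paragraph, which you candidly identify as where the substance lives, is not an argument. Writing ``This is exactly the content of \cite[8.5]{HarPre}'' while attempting to prove that very statement is circular, and the paragraph that follows is a list of tools you ``would try to exploit'' rather than a chain of deductions. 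In particular, you never actually produce the morphism $g' : C_\phi \to M$ that opens $\phi'/\psi'$ in $M$: there is no construction, no interpolation of morphisms through modules of nearby slope, and no use of the tube combinatorics that the Harland--Prest proof relies on. You would need to explain, for instance, why the existence of some $C_\phi$-valued point of $N$ not factoring through $C_\psi$ forces the existence of an analogous point of $M$ — perhaps by realising both $M$ and $N$ as direct limits (equivalently inverse limits) over the same cofinal family of finite-dimensional modules near slope $r$ and showing that the filtered images of $\mathrm{Hom}(C_\phi, -)$ modulo the factoring ideal stabilise to the same nonzero group. Until something of that sort is supplied, the proposal is an outline that re-poses the theorem rather than proving it. A secondary caution: you invoke \ref{espes}, which appears \emph{after} \ref{nodefsub} in the paper's logical order; its proof happens not to depend on \ref{nodefsub}, but you should verify independence explicitly before using it, since otherwise you risk a second circularity.
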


\begin{theorem}\label{espes} \marginpar{espes} Every exact sequence in ${\cal D}_r$ is pure-exact.
\end{theorem}
\begin{proof}  Suppose that $0 \to M' \to M \to M'' \to 0$ is an exact sequence in ${\cal D}_r$.  Express $M''$ as a direct limit $\varinjlim_{\lambda} \, A_\lambda$ of finite-dimensional modules.  Each $A_\lambda$ is in ${\bf p}_r$, so ${\rm Ext}^1(A_\lambda,M')=0$ and hence each pullback sequence below is split.

$\xymatrix{0 \ar[r] & M' \ar[r] & M \ar[r] & M'' \ar[r] & 0 \\
0 \ar[r] & M' \ar[r] \ar[u] & X_\lambda \ar[r] \ar[u] & A_\lambda \ar[r] \ar[u] & 0
}$

These fit together ($X_\lambda$ is just the full inverse image of $A_\lambda$ in $M$) into a directed system of split exact sequences, with limit the original exact sequence which is, therefore (see \cite[2.1.4]{PreNBK}), pure-exact.
\end{proof}

\begin{lemma}\label{iminD} \marginpar{iminD} If $M' \xrightarrow{f} M$ with $M$, $M'$ both in ${\cal D}_r$ then ${\rm im}(f)\in {\cal D}_r$.
\end{lemma}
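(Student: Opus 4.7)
The plan is to use the characterization ${\cal D}_r = {\cal B}_r \cap {\cal C}_r$ and to verify each side separately, exploiting that ${\rm im}(f)$ sits between $M'$ (as an epimorphic image) and $M$ (as a submodule), together with the fact that the two Hom-perp conditions defining ${\cal B}_r$ and ${\cal C}_r$ respectively pass to submodules and to quotients.

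First, to show ${\rm im}(f) \in {\cal B}_r = {\bf q}_r^{\perp_0}$: take any $Q \in {\bf q}_r$ and any morphism $g: Q \to {\rm im}(f)$. Composing with the inclusion $\iota: {\rm im}(f) \hookrightarrow M$ yields $\iota g: Q \to M$, which must be zero since $M \in {\cal B}_r$. Because $\iota$ is a monomorphism, $g = 0$, so ${\rm im}(f) \in {\cal B}_r$.

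Dually, to show ${\rm im}(f) \in {\cal C}_r = \, ^{\perp_0}{\bf p}_r$: take any $P \in {\bf p}_r$ and any morphism $h: {\rm im}(f) \to P$. Let $\pi: M' \twoheadrightarrow {\rm im}(f)$ be the canonical surjection induced by $f$. Then $h\pi: M' \to P$ must be zero since $M' \in {\cal C}_r$, and because $\pi$ is an epimorphism, $h = 0$. Hence ${\rm im}(f) \in {\cal C}_r$, so ${\rm im}(f) \in {\cal D}_r$.

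There is essentially no obstacle here; the argument is a one-line manipulation of the perpendicular definitions, and the fact that it works uniformly relies only on $M \in {\cal B}_r$, $M' \in {\cal C}_r$, and the trivial sub/quotient factorisation of $f$. It is worth noting that no appeal to \ref{espes} or to finite-dimensionality beyond that already packaged into the definitions of ${\bf p}_r$, ${\bf q}_r$ is required.
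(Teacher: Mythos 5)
Your proof is correct and is the same argument as the paper's, just spelled out in more detail: the paper's proof notes that ${\rm im}(f)$ is a submodule of $M$ to get $({\bf q}_r,{\rm im}(f))=0$ and a quotient of $M'$ to get $({\rm im}(f),{\bf p}_r)=0$. No further commentary is needed.
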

\begin{proof}  Since $M'' ={\rm im}(f)$ embeds in $M$, $({\bf q}_r,M'')=0$.  Since $M''$ is an epimorphic image of $M'$, $(M'',{\bf p}_r)=0$, so $M'' \in {\cal B}_r \, \cap {\cal C}_r ={\cal D}_r$, as required.
\end{proof}

The category ${\cal D}_r$ is not, however, closed in ${\rm Mod}\mbox{-}R$ under kernels and cokernels.  Indeed, as we have seen in \ref{tiltapprox}, for any finite-dimensional module $A$ of slope $<r$ there is an exact sequence $0 \to A \to T_0 \xrightarrow{f} T_1 \to 0$ where $T_0, T_1 \in {\rm Add}(T) \subseteq {\cal D}_r$.  Dually, any finite-dimensional module of slope $>r$ is the cokernel of a morphism $g:C_0 \to C_1$ in ${\cal D}_r$ with $C_0, C_1\in {\rm Prod}(C)$.  In Section \ref{secnonpure} will see more precisely what are the morphisms with kernel not in ${\cal D}_r$.

\begin{lemma}\label{puregencogen} \marginpar{puregencogen} (\cite[proof of 6.4]{AngKusAlg}) For every $D\in {\cal D}_r$ there is an exact sequence 
$$0 \to T_1 \xrightarrow{f} T_0 \xrightarrow{p} D \to 0$$

\noindent with $T_0, T_1 \in {\rm Add}(T)$, $p$ a pure epimorphism and the inclusion ${\rm im}(f) \to T_0$ a pure monomorphism; and there is an exact sequence

$$0 \to D \xrightarrow{i} C_0 \xrightarrow{g} C_1 \to 0$$

\noindent with $C_0, C_1 \in {\rm Prod}(C)$, $i$ a pure monomorphism and $C_0 \to C_0/D$ a pure epimorphism.
\end{lemma}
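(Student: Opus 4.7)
The plan is to obtain both sequences by combining two earlier ingredients: the fact that $D$ already lies in $\text{Pres}(T)$ and in $\text{Copres}(C)$, and Proposition \ref{espes}, which turns any exact sequence in ${\cal D}_r$ into a pure-exact one. The only real work is to verify that the middle ``image'' term in each case actually lies in ${\cal D}_r$, after which \ref{espes} delivers purity of both halves of each short exact sequence simultaneously.

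First I would use $D\in{\cal D}_r\subseteq{\cal C}_r={\rm Pres}(T)$ to pick an exact sequence $T_1\xrightarrow{f}T_0\xrightarrow{p}D\to 0$ with $T_0,T_1\in{\rm Add}(T)\subseteq{\cal D}_r$. By Lemma \ref{iminD} applied to $f:T_1\to T_0$, the image ${\rm im}(f)$ lies in ${\cal D}_r$, so the short exact sequence
\[ 0\to {\rm im}(f)\to T_0\xrightarrow{p} D\to 0 \]
sits entirely inside ${\cal D}_r$. By Proposition \ref{espes} this sequence is pure-exact, which simultaneously gives that $p$ is a pure epimorphism and that the inclusion ${\rm im}(f)\hookrightarrow T_0$ is a pure monomorphism.

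Dually, I would use $D\in{\cal D}_r\subseteq{\cal B}_r={\rm Copres}(C)$ to pick an exact sequence $0\to D\xrightarrow{i}C_0\xrightarrow{g}C_1$ with $C_0,C_1\in{\rm Prod}(C)\subseteq{\cal D}_r$. Here Lemma \ref{iminD} cannot be invoked verbatim, but the same reasoning works: ${\rm im}(g)$ lies in ${\cal B}_r$ as a submodule of $C_1\in{\cal B}_r$ (so $({\bf q}_r,{\rm im}(g))=0$), and it lies in ${\cal C}_r$ because the functor $(-,{\bf p}_r)$ is left exact, so any epimorphic image of $C_0\in{\cal C}_r$ still has no nonzero map to ${\bf p}_r$. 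Hence ${\rm im}(g)\in{\cal D}_r$, the short exact sequence $0\to D\xrightarrow{i} C_0\to{\rm im}(g)\to 0$ lies in ${\cal D}_r$, and \ref{espes} again makes it pure-exact; since $C_0/D\cong{\rm im}(g)$, this yields both that $i$ is a pure monomorphism and that $C_0\to C_0/D$ is a pure epimorphism.

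There is no real obstacle here: all the substance has already been invested in \ref{espes} and \ref{iminD}. The only mildly delicate point, which I would flag explicitly, is that the cotilting half uses closure of ${\cal C}_r$ under epimorphic images (the left-exactness of $(-,{\bf p}_r)$) as the dual of \ref{iminD}, rather than \ref{iminD} itself.
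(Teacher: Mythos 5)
Your proof is correct and follows essentially the same route as the paper: obtain the exact sequences from $D\in{\rm Pres}(T)\cap{\rm Copres}(C)$, identify the middle image term as lying in ${\cal D}_r$, and invoke Proposition \ref{espes} to get purity of both halves at once. The one aside worth correcting is your claim that Lemma \ref{iminD} ``cannot be invoked verbatim'' in the cotilting half: it can, applied to $g:C_0\to C_1$ (both objects are in ${\rm Prod}(C)\subseteq{\cal D}_r$), since $C_0/D\cong{\rm im}(g)$ by exactness at $C_0$; your re-derivation of the needed closure facts is correct but just reproves \ref{iminD}.
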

\begin{proof}  Since ${\cal D}_r ={\rm Pres}(T)$, there is an exact sequence $T_1 \xrightarrow{f} T_0 \to D \to 0$ with $T_0, T_1 \in {\rm Add}(T)$.  By \ref{iminD}, ${\rm im}(f) \in {\cal D}$ so we have an exact sequence $0 \to {\rm im}(f) \to T_0 \to D \to 0$ in ${\cal D}_r$ which, by \ref{espes}, is pure-exact.

For the second statement, since ${\cal D}_r = {\rm Copres}(C)$, we have an exact sequence $0 \to D \xrightarrow{i} C_0 \to C_1$ with $C_0, C_1 \in {\rm Copres}(C)$.  Since $C_0/{\rm im}(i) \in {\cal D}$, we have, by \ref{espes}, that the exact sequence $0 \to D \to C_0 \to C_0/D \to 0$ is pure-exact.
\end{proof}

Recall, \ref{siltstrDat}, that every tilting module $T$ for ${\cal D}_r$ is strictly ${\cal D}_r$-atomic and some finite power of it is a ${\cal D}_r$-preenvelope for $R$.  

\begin{prop}\label{fpinT} \marginpar{fpinT} Let $A\in {\rm mod}\mbox{-}R$.  Then there is a morphism $A \to T$ for some tilting module $T$ for ${\cal D}_r$ such that this is a strictly ${\cal D}_r$-atomic, ${\cal D}_r$-preenvelope for $A$.
\end{prop}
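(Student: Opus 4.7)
The plan is to split $A$ by slope and reduce to Lemma~\ref{tiltapprox}. Since $R$ is tubular and $r$ is irrational, no finite-dimensional indecomposable has slope exactly $r$ (their slopes lie in ${\mathbb Q}_{\geq 0} \cup \{\infty\}$), so by Krull--Schmidt I would write $A = A_{<r} \oplus A_{>r}$ with $A_{<r} \in {\rm add}({\bf p}_r)$ and $A_{>r} \in {\rm add}({\bf q}_r)$.

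First, I would observe that every morphism from $A_{>r}$ into any $D \in {\cal D}_r$ vanishes, since $A_{>r} \in {\bf q}_r$ and ${\cal D}_r \subseteq {\cal B}_r = {\bf q}_r^{\perp_0}$. Hence any ${\cal D}_r$-preenvelope of $A$ is determined by its restriction to $A_{<r}$, and this is where the real content lies.

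Next, since $A_{<r}$ is supported on $(-\infty, r)$, \ref{tiltapprox} supplies a ${\cal D}_r$-preenvelope $f': A_{<r} \to T_0$ with $T_0 \in {\rm Add}(T)$ (fitting into a short exact sequence $0 \to A_{<r} \to T_0 \to T_1 \to 0$ with $T_1 \in {\rm Add}(T)$). To ensure the codomain is itself a tilting module rather than merely a summand of one, I would set $T' = T_0 \oplus T$; since $T_0 \in {\rm Add}(T)$ we have ${\rm Add}(T') = {\rm Add}(T)$, so $T'$ is tilting, equivalent to $T$. Define $f: A \to T'$ to be $f'$ followed by the summand inclusion $T_0 \hookrightarrow T_0 \oplus T$ on the $A_{<r}$-component, and $0$ on the $A_{>r}$-component. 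Verifying the preenvelope property is then routine: given $g: A \to D \in {\cal D}_r$, the restriction $g|_{A_{>r}}$ is zero by the above observation and $g|_{A_{<r}}$ factors through $f'$, say as $h' \circ f'$ with $h': T_0 \to D$; extending $h'$ by $0$ on the $T$-summand gives $h: T' \to D$ with $hf = g$.

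Finally, $T' \in {\rm Add}(T)$ is strictly ${\cal D}_r$-atomic by Corollary~\ref{siltstrat}, so both requirements (tilting codomain, strictly atomic) are met. The only non-routine ingredient is the slope-splitting of $A$, which depends on the tubular structure of $R$ and the irrationality of $r$; once that is in place, the construction is essentially forced by \ref{tiltapprox} together with the orthogonality ${\cal D}_r \subseteq {\bf q}_r^{\perp_0}$.
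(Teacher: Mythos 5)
Your proof is correct, and it takes a genuinely different route from the paper's. The paper first invokes Makkai's existence theorem (Theorem \ref{Mak}) to obtain an abstract strictly ${\cal D}_r$-atomic preenvelope $A \to D_A$, then uses \ref{puregencogen} to find a pure epimorphism $T \to D_A$ from a tilting module, and lifts $A \to D_A$ through it (invoking \ref{pureepichar} and \ref{Denvpp} to see that the lift is still a preenvelope). You instead exploit the tubular structure directly: since $r$ is irrational and all finite-dimensional indecomposables have rational (or infinite) slope, $A$ splits as $A_{<r} \oplus A_{>r}$; maps from $A_{>r}$ into ${\cal D}_r \subseteq {\bf q}_r^{\perp_0}$ all vanish, so only $A_{<r}$ is relevant; and \ref{tiltapprox} then hands you a preenvelope $A_{<r} \to T_0$ with $T_0 \in {\rm Add}(T)$, which you upgrade to a tilting codomain by adding $T$ as a direct summand. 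Both routes ultimately appeal to \ref{dsstrDat}/\ref{siltstrat} for strict atomicity of the target. The trade-off is that your argument is more explicit and entirely bypasses Makkai's theorem, whereas the paper's argument is agnostic to the slope decomposition and in fact illustrates how \ref{Mak} is meant to be used; your approach also makes the preenvelope concrete (the left term of the short exact sequence in \ref{tiltapprox}), which is potentially useful information. One small point worth stating explicitly, which you gesture at but don't prove: that every finite-dimensional indecomposable over a tubular algebra has slope in ${\mathbb Q}_{\geq 0}\cup\{\infty\}$ is a theorem of Ringel, and it is the crux that makes the splitting $A=A_{<r}\oplus A_{>r}$ exhaustive; the paper never needs this fact in its own proof of the proposition.
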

\begin{proof}  Choose, by \ref{Mak}, some strictly atomic ${\cal D}_r$-preenvelope $f:A \to D_A$ for $A$.  There is, by \ref{puregencogen}, a pure epimorphism $p:T \to D_A$ for some tilting module $T$ for ${\cal D}_r$.  Suppose that $\overline{a}$ is a generating tuple for $A$, and let $\phi$ be such that ${\rm pp}^A(\overline{a}) = \langle \phi \rangle$.  Since $p$ is a pure epimorphism there is a tuple $\overline{b} \in \phi(T)$ with $p\overline{b} = f\overline{a}$ hence, by \ref{Denvpp}, with ${\rm pp}^T(\overline{b}) = {\rm pp}^{D_A}(f\overline{a})$ being ${\cal D}_r$-generated by $\phi$.  Therefore, by \ref{siltstrDat}, the morphism $A \to T$ given by $\overline{a} \mapsto \overline{b}$ is a strictly ${\cal D}_r$-atomic, ${\cal D}_r$-preenvelope for $A$.
\end{proof}

\begin{cor}\label{strDrat} \marginpar{strDrat}  Let $T$ be a tilting module for ${\cal D}_r$.  Then every strictly ${\cal D}_r$-atomic module in ${\cal D}_r$ is in ${\rm Add}(T)$.
\end{cor}
\begin{proof}  Suppose that $D \in {\cal D}_r$ is strictly ${\cal D}_r$-atomic.  Let $\overline{a}$ be a tuple from $D$, so ${\rm pp}^D(\overline{a})$  is generated, modulo the theory of ${\cal D}_r$ by a pp formula, $\phi$, say.  Let $(C_\phi,\overline{c}_\phi)$ be a free realisation of $\phi$.  By \ref{fpinT} there is a ${\cal D}_r$-preenvelope $f_\phi: C_\phi \to T_\phi$ with $T$ in ${\rm Add}(T)$.  By assumption, there is a morphism $g_{\overline{a}}: D\to T_\phi$ taking $\overline{a}$ to $f_\phi \overline{c}_\phi$.  Take the direct sum of all these morphisms $g_{\overline{a}}$ as $\overline{a}$ ranges over finite tuples in $D$.  Then this morphism is pp-type-preserving, hence a pure embedding.

Since ${\cal D}_r \subseteq {\rm Gen}(T) = T^{\perp_1}$, ${\rm Add}(T) \subseteq {^{\perp_1}}{\cal D}_r$.  By \cite[p.~846, Rmk.~1]{AngKusAlg} every module in ${\cal D}$ has injective dimension $\leq 1$ and hence ${^{\perp_1}}{\cal D}_r$ is closed under submodules, so ${\rm Ext}^1(D,{\cal D}_r)=0$.  But, \ref{puregencogen}, there is an exact sequence $0 \to T_1 \to T_0 \to D \to 0$ with $T_0, T_1 \in {\rm Add}(T)$.  So $D$ is a direct summand of $T_0$ and hence is in ${\rm Add}(T)$ as claimed.
\end{proof}

\subsection{Non-pure morphisms in ${\cal D}_r$}\label{secnonpure} \marginpar{secnonpure}

The next result and its extension that follows in some sense explain the non-pure surjections in ${\cal D}_r$.  First, note that, if $A$ is a finitely presented module and if $\overline{a}$ is a finite generating tuple of $A$, with $\theta_{\overline{a}}$ the conjunction of finitely many relations on $\overline{a}$ which generate all the $R$-linear relations on $\overline{a}$, then $\theta_{\overline{a}}$ generates the pp-type, ${\rm pp}^A(\overline{a})$, of $\overline{a}$ in $A$.

\begin{prop}\label{fgker} \marginpar{fgker}  Suppose that $A=\overline{a}R$ is a finitely generated submodule of $D\in {\cal D}_r$ and let $\theta_{\overline{a}}$ be a quantifier-free formula generating ${\rm pp}^A(\overline{a})$.  Then $D/A\in {\cal D}_r$ iff ${\rm pp}^D(\overline{a}) = \langle \theta_{\overline{a}} \rangle_{{\cal D}_r}$. 
\end{prop}
\begin{proof}  ($\Rightarrow$)  Suppose that $\phi \in {\rm pp}^D(\overline{a})$, that is $\phi$ is pp and $\overline{a} \in \phi(D)$.  If $(C_\phi, \overline{c})$ is a free realisation of $\phi$ then there is a morphism $C_\phi \to D$ taking $\overline{c}$ to $\overline{a}$, so we may assume that $C_\phi \in {\bf p}_r$.  By \ref{localpp} there is $\phi' \geq \phi$ and $\epsilon >0$ and a free realisation $(C_{\phi'}, \overline{c}')$ of $\phi'$ such that $C_{\phi'} \in {\bf p}_r$, $C_{\phi'}/\langle \overline{c}' \rangle \in {\bf q}_r$ and $\phi' \equiv \phi$ on $(r-\epsilon, r+\epsilon)$.  In particular $\phi' \equiv_{{\cal D}_r} \phi$.

Also, since there will therefore be a morphism $f:C_{\phi'} \to D$ with $\overline{c}' \mapsto \overline{a}$, there is an induced morphism $C_{\phi'}/\langle \overline{c} \rangle \to D/A$.  We are assuming that $D/A$ has slope $r$, so this must be the zero map and hence ${\rm im}(f) =A$.  Thus we have a morphism $C_{\phi'} \to A$ with $\overline{c}' \mapsto \overline{a}$ and we deduce that $\overline{a} \in \phi'(A)$.  Since $\overline{a} \in A$ freely realises $\theta_{\overline{a}}$, we deduce that $\phi' \geq \theta_{\overline{a}}$.

So, since $\phi' \equiv \phi$ on ${\cal D}_r$ (in fact, on a neighbourhood of $r$), we have $\phi \geq_{{\cal D}_r} \theta_{\overline{a}}$ and hence ${\rm pp}^D(\overline{a}) = \langle \theta_{\overline{a}} \rangle_{{\cal D}_r}$.

($\Leftarrow$)  For the converse, we have by \ref{tiltapprox} that there is an exact sequence $0 \to A \xrightarrow{i} L_0 \xrightarrow{p} L_1 \to 0$ with $L_0$ a ${\cal D}_r$-preenvelope of $A$ and $L_1 \in {\cal D}_r$.  So there is $f:L_0 \to D$ with $A\xrightarrow{i} L_0 \xrightarrow{f} D$ equal to the inclusion $A \leq D$.  By assumption and \ref{Denvpp} we have ${\rm pp}^D(\overline{a}) = {\rm pp}^{L_0}(i\overline{a})$.  We use this to show that if $\phi/\psi$ is a pp-pair closed on ${\cal D}_r$, then $\phi/\psi$ is closed on $D/A$, and hence $D/A \in {\cal D}_r$.

So suppose that $D/A \models \phi(\overline{c}')$ where $\phi(\overline{x})$ is $\exists \overline{y} \, \theta(\overline{x}, \overline{y})$ where $\theta$ is 
$$\bigwedge_j \, \sum_i x_ir_{ij} + \sum_k y_ks_{kj} =0,$$ 
say $D/A \models \theta(\overline{c}', \overline{d}')$ for some $\overline{d}'$ in $D/A$.  So we have 
$$\bigwedge_j \, \sum_i c'_ir_{ij} + \sum_k d'_ks_{kj} =0.$$  
Choose inverse images $c_i$ of $c_i'$ and $d_j$ of $d_j'$ in $D$ and also choose $a_j \in iA$ such that $$\bigwedge_j \, \sum_i c_ir_{ij} + \sum_k d_ks_{kj} = fa_j.$$  
Therefore $$D \models \exists \overline{x}\,  \overline{y} \, \bigwedge_j \, \sum_i x_ir_{ij} + \sum_k y_ks_{kj} =fa_j$$ 
and so the formula $$\exists \overline{x}\, \overline{y} \, \bigwedge_j \, \sum_i x_ir_{ij} + \sum_k y_ks_{kj} =z_j$$ 
is in ${\rm pp}^D(f\overline{a}) = {\rm pp}^{L_0}(i\overline{a})$.  Therefore $$L_0 \models \bigwedge_j \, \sum_i m_ir_{ij} + \sum_k n_ks_{kj} =a_j$$ 
for some $m_i, n_j \in L_0$ and hence $$L_1 \models \phi(p\overline{m}).$$  
Note that $$D \models \bigwedge_j \, \sum_i fm_ir_{ij} + \sum_k fn_ks_{kj} =fa_j$$ follows and hence $$D \models \bigwedge_j \, \sum_i (c_i -fm_i)r_{ij} + \sum_k (d_k -fn_k)s_{kj} =0,$$ 
that is, $D\models \theta(\overline{c}-f\overline{m}, \overline{d}-f\overline{n})$ and hence $D \models \phi(\overline{c}-f\overline{m})$.  We are assuming $\phi/\psi$ to be closed on $D$ and therefore $D\models \psi(\overline{c} -f\overline{m})$ and so $D/A \models \psi(\overline{c}' -\pi f\overline{m})$ where $\pi:D \to D/A$ is the projection.

We know that $\phi/\psi$ is also closed on $L_1$ where $\psi(\overline{x})$ is, say, $\exists \overline{u} \, \theta'(\overline{x}, \overline{u})$ and $\theta'$ is $\bigwedge_t \, \sum_i x_ir'_{it} + \sum_l u_ls'_{lt} =0$.  Therefore there are $e'_l \in L_1$ such that
$$L_1 \models \bigwedge_t \, \sum_i pm_ir'_{it} + \sum_l e'_ls'_{lt} =0.$$  
So there are $e_l$ in $L_0$ with $pe_l =e'_l$ and there are $a'_t \in iA$ such that 
$$L_0 \models \bigwedge_t \, \sum_i m_ir'_{it} + \sum_l e_ls'_{lt} =a'_t$$ and hence such that 
$$D \models \bigwedge_t \, \sum_i fm_ir'_{it} + \sum_l fe_ls'_{lt} =fa'_t.$$  
We deduce that 
$$D/A \models \bigwedge_t \, \sum_i \pi fm_ir'_{it} + \sum_l \pi fe_ls'_{lt} =0,$$ that is $D/A \models \theta'(\pi f\overline{m}, \pi f \overline{u})$, hence $D/A \models \psi(\pi f\overline{m})$.  Combined with the conclusion of the previous paragraph, this gives $D/A\models \psi(\overline{c}')$, as required.
\end{proof}

That is, if a finitely generated submodule $A$ of $D\in {\cal D}_r$ has its pp-type\footnote{equivalently, the pp-type of any generating tuple.} in $D$ being the minimal possible - that is, ${\cal D}_r$-generated by its isomorphism type - then $D/A \in {\cal D}_r$ (and conversely - that is stated formally as \ref{minppfg} below).  Thus we have a source of morphisms in ${\cal D}_r$ with kernel not in ${\cal D}_r$.

We have the following extension of \ref{fgker} which identifies the kernels of morphisms in ${\cal D}_r$ as the definably closed submodules of modules in ${\cal D}_r$ (note that a pure submodule is definably closed).

\begin{theorem}\label{kerinDr} \marginpar{kerinDr}  Suppose that $K\subseteq D \in {\cal D}_r$.  Then $D/K \in {\cal D}_r$ iff $K$ is definably closed in $D$.
\end{theorem}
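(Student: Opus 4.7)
The plan is to prove the two directions separately, using \ref{nodefsub} for one and \ref{localpp} together with a purity argument for the other.

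For the forward direction ($D/K\in{\cal D}_r \Rightarrow K$ definably closed): we may assume $D/K\neq 0$, since otherwise $K=D$ is trivially definably closed. Suppose $b\in {\rm dcl}^D(K)$, witnessed by a pp formula $\rho(\overline{x},y)$ with $\overline{a}$ from $K$, $D\models\rho(\overline{a},b)$ and $\rho(\overline{0},D)=0$. The last condition says the pp-pair $\rho(\overline{0},y)/(y=0)$ is closed on $D$. By \ref{nodefsub}, $D$ and $D/K$ (both nonzero in ${\cal D}_r$) open the same pp-pairs, so this pair is closed on $D/K$ as well. Projecting via $\pi:D\to D/K$ gives $D/K\models \rho(\overline{0},\pi b)$, forcing $\pi b=0$, i.e.\ $b\in K$.

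For the backward direction, the key step is to show that $K$ is in fact pure in $D$; then $D\to D/K$ is a pure epimorphism, and since definable subcategories are closed under pure-epimorphic images (as used in the proof of \ref{puremonopo}), it follows that $D/K\in{\cal D}_r$. To prove purity, take a tuple $\overline{k}$ from $K$ and a pp formula $\phi$ with $D\models\phi(\overline{k})$. Apply \ref{localpp} to replace $\phi$ by a pp formula $\phi'(\overline{x})=\exists\overline{y}\,\theta'(\overline{x},\overline{y})$ (with $\theta'$ quantifier-free) that agrees with $\phi$ on all modules supported on $(r-\epsilon,r+\epsilon)$, in particular on $D$, and that has unique witnesses to its existentials on such modules (parts (5) and (6) of \ref{localpp}). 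Let $\overline{d}$ be the unique witness in $D$ with $D\models\theta'(\overline{k},\overline{d})$.

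Each component $d_i$ is pp-definable over $\overline{k}$ in $D$ by the formula $\rho_i(\overline{x},y_i):=\exists y_1,\dots,\widehat{y_i},\dots,y_m\,\theta'(\overline{x},\overline{y})$ (omitting $y_i$ from the existentials). To verify that $\rho_i(\overline{0},D)=0$, so that $\rho_i$ really witnesses membership in ${\rm dcl}^D(\overline{k})$, note that $\theta'(\overline{0},\overline{0})$ holds trivially (quantifier-free equations with all entries zero); uniqueness of the witness at $\overline{x}=\overline{0}$ then forces $\theta'(\overline{0},D)=\{\overline{0}\}$, whence $\rho_i(\overline{0},D)=0$. Therefore $\overline{d}\in {\rm dcl}^D(\overline{k})\subseteq {\rm dcl}^D(K)=K$. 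Since $\theta'$ is quantifier-free and all its variables take values in $K$, we get $K\models\theta'(\overline{k},\overline{d})$, so $K\models\phi'(\overline{k})$, and because $\phi'\Rightarrow\phi$, also $K\models\phi(\overline{k})$. This shows $K$ is pure in $D$. The main subtlety is the uniqueness-at-zero step used to conclude $\rho_i(\overline{0},D)=0$: without the unique-witness form of \ref{localpp}, one cannot guarantee that the auxiliary witnesses lie in the definable closure of $\overline{k}$ rather than of some larger parameter set; everything else is a routine application of tools already developed in the paper.
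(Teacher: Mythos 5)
The forward direction is correct and is essentially the paper's own argument, via \ref{nodefsub}.

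The backward direction has a fatal gap. You claim to show that if $K$ is definably closed in $D$ then $K$ is \emph{pure} in $D$, and then invoke closure of definable categories under pure-epimorphic images. But this intermediate claim is false: by \ref{tiltapprox} (or \ref{dcltriv}), for any nonzero finite-dimensional $M$ supported on $(-\infty,r-\eta)$ there is an exact sequence $0\to M\to T_0\to T_1\to 0$ with $T_0,T_1\in{\cal D}_r$ and ${\rm dcl}^{T_0}(M)=M$, so $M$ is a definably closed submodule of $T_0$ with $T_0/M\in{\cal D}_r$, yet $M$ is not pure in $T_0$ (a nonzero pure submodule of an object of ${\cal D}_r$ would itself lie in ${\cal D}_r$, but $M$ has rational slope). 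The paper in fact flags exactly this point in the remark following \ref{ppdcl}: ``this does not imply that the inclusion of ${\rm dcl}^D(\overline a)$ in $D$ is pure, nor that ${\rm dcl}^D(\overline a)$ is in ${\cal D}_r$.''

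The precise place the argument breaks is the final implication ``because $\phi'\Rightarrow\phi$, also $K\models\phi(\overline k)$.'' Theorem \ref{localpp} only gives $\phi(X)=\phi'(X)$ for $X$ supported on $(r-\epsilon,r+\epsilon)$; it does not give $\phi'\leq\phi$ globally, and indeed the usage in the proof of \ref{fgker} is $\phi'\geq\phi$, i.e.\ $\phi$ implies $\phi'$, the opposite direction. Since $K$ need not be supported near $r$ (it may, for instance, be finite-dimensional), knowing $K\models\phi'(\overline k)$ does not yield $K\models\phi(\overline k)$. What you \emph{have} established is close in spirit to \ref{ppdcl} --- that $\phi'$, and hence any ${\cal D}_r$-generator of ${\rm pp}^D(\overline k)$, is already witnessed inside $K$ --- but that is much weaker than purity, and the paper has to run a substantially more delicate syntactic argument (along the lines of \ref{fgker}, working with ${\cal D}_r$-preenvelopes of finitely generated submodules of $K$ and chasing witnesses through $L_0\to D$ and $L_0\to L_1$) to deduce directly that every pp-pair closed on ${\cal D}_r$ is closed on $D/K$, without ever claiming purity of $K$ in $D$.
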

\begin{proof}  Set $\pi: D \to D/K$ to be the projection map.

($\Rightarrow$)  We have seen this argument already:  suppose that $b\in {\rm dcl}^D(K)$; say $D\models \rho(\overline{a}, b)$ with $\overline{a}$ from $K$, $\rho$ pp and $\rho(\overline{0},D) =0$, hence also $\rho(\overline{0},D/K)=0$ by assumption and \ref{nodefsub}.  Then $D/K\models \rho(\overline{0}, \pi b)$, so $\pi b=0$ and $b\in K$, as required.

($\Leftarrow$)  The argument is a modification of that for \ref{fgker}. 

Suppose that the pp-pair $\phi/\psi$ is closed on ${\cal D}_r$; we show that $\phi/\psi$ is closed on $D/K$, which will be enough.

So suppose that $D/K \models \phi(\overline{c}')$ where $\phi(\overline{x})$ is $\exists \overline{y} \, \theta(\overline{x}, \overline{y})$ with $\theta$ being 
$$\bigwedge_j \, \sum_i x_ir_{ij} + \sum_k y_ks_{kj} =0,$$ 
say $D/K \models \theta(\overline{c}', \overline{d}')$ for some $\overline{d}'$ in $D/K$.  So we have 
$$\bigwedge_j \, \sum_i c'_ir_{ij} + \sum_k d'_ks_{kj} =0.$$
Choose inverse images $c_i$ of $c_i'$ and $d_j$ of $d_j'$ in $D$ and also choose $a_j \in K$ such that $$D\models \bigwedge_j \, \sum_i c_ir_{ij} + \sum_k d_ks_{kj} = a_j.$$  Therefore $$D \models \exists \overline{x}\,  \overline{y} \, \bigwedge_j \, \sum_i x_ir_{ij} + \sum_k y_ks_{kj} =a_j$$ and so the formula $\tau(\overline{v})$ which is $$\exists \overline{x}\, \overline{y} \, \bigwedge_j \, \sum_i x_ir_{ij} + \sum_k y_ks_{kj} =v_j$$ is in ${\rm pp}^D(\overline{a})$.  By \ref{ppdcl} there is a pp formula $\exists \overline{z} \, \theta_0(\overline{z}, \overline{v}) \in {\rm pp}^K(\overline{a})$ with $\theta_0$ quantifier-free such that $\exists \overline{z} \, \theta_0(\overline{z}, \overline{v}) \leq_{{\cal D}_r} \tau(\overline{v})$.  Say we have $\theta_0(\overline{\kappa}, \overline{a})$ with $\kappa$ from $K$.  Set $K_0 = \langle \overline{a}, \overline{\kappa} \rangle$ to be the module generated by the entries of these tuples.  Note that $K_0 \models \theta_0(\overline{\kappa}, \overline{a})$.

There is, since $K_0$ is finitely generated and is a submodule of $D \in {\cal D}_r$, an exact sequence $0 \to K_0' \to L_0 \to L_1 \to 0$ with $K_0'$ a copy of $K_0$, $L_0$ a ${\cal D}_r$-preenvelope of $K_0'$ and $L_1 \in {\cal D}_r$.  So there is $f:L_0 \to D$ which restricts to an isomorphism on $K_0' \simeq K_0$.

Since $K_0 \models \theta_0(\overline{\kappa}, \overline{a})$, we have $K_0' \models \exists \overline{z} \, \theta_0(\overline{z}, \overline{a}_0)$, where we write $\overline{a}_0$ for the copy of $\overline{a}$ in $K_0'$.  Therefore $\exists \overline{z} \, \theta_0(\overline{z}, \overline{v}) \in {\rm pp}^{L_0}(\overline{a}_0)$ (we identify $K_0'$ with its image in $L_0$) and so, by choice of $\theta_0$, we have $L_0 \models \tau(\overline{a}_0)$.  Say 
$$L_0 \models \bigwedge_j \, \sum_i m_ir_{ij} + \sum_k n_ks_{kj} =a_{0j}$$ 
for some $m_i, n_j \in L_0$ and hence 
$$L_1 \models \phi(p\overline{m}).$$  
Noting that $$D \models \bigwedge_j \, \sum_i fm_ir_{ij} + \sum_k fn_ks_{kj} =fa_{0j}, = a_j$$ 
we proceed from here exactly as in the proof of \ref{fgker}.
\end{proof}

This lets us say precisely how the morphisms in ${\cal D}_r$ with kernel not in ${\cal D}_r$ are associated with minimal pp-types in ${\cal D}_r$.  

\begin{cor}\label{LHmors} \marginpar{LHmors}  Suppose $f:D\to D'$ with $D$, $D'$ in ${\cal D}_r$ and $K = {\rm ker}(f)$ supported on $(-\infty, r-\eta)$ for some $\eta >0$, in particular, $K \notin {\cal D}_r$.  Then for every finite tuple $\overline{a}$ from $K$, ${\rm pp}^D(\overline{a}) = \langle {\rm pp}^K(\overline{a}) \rangle_{{\cal D}_r}$. That is, ${\rm pp}^D(K)$ is the minimal pp-type realised in ${\cal D}_r$ extending the isomorphism type of $K$.
\end{cor}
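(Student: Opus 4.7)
The statement should follow cleanly by combining \ref{iminD}, \ref{kerinDr} and \ref{ppdcl}. The plan is: first show that $K$ is definably closed in $D$, then translate this into the desired pp-type equality via \ref{ppdcl}.

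First I would observe that $D/K$ is in ${\cal D}_r$. Indeed, $D/K \cong {\rm im}(f) \subseteq D'$ and, since $D, D' \in {\cal D}_r$, Lemma \ref{iminD} applied to $f : D \to D'$ gives ${\rm im}(f) \in {\cal D}_r$. Theorem \ref{kerinDr} then yields that $K$ is definably closed in $D$, so for every finite tuple $\overline{a}$ from $K$ one has ${\rm dcl}^D(\overline{a}) \subseteq K$.

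With this in hand, Corollary \ref{ppdcl} applied to $D$ gives that ${\rm pp}^D(\overline{a})$ is ${\cal D}_r$-generated by ${\rm pp}^{{\rm dcl}^D(\overline{a})}(\overline{a})$. Since the inclusions ${\rm dcl}^D(\overline{a}) \subseteq K \subseteq D$ preserve pp formulas on $\overline{a}$,
$${\rm pp}^{{\rm dcl}^D(\overline{a})}(\overline{a}) \subseteq {\rm pp}^K(\overline{a}) \subseteq {\rm pp}^D(\overline{a}).$$
Taking $\langle - \rangle_{{\cal D}_r}$: the left-hand side already ${\cal D}_r$-generates ${\rm pp}^D(\overline{a})$, so the middle one does as well, giving ${\rm pp}^D(\overline{a}) \subseteq \langle {\rm pp}^K(\overline{a}) \rangle_{{\cal D}_r}$. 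For the reverse inclusion, note that since $D \in {\cal D}_r$ the pp-type ${\rm pp}^D(\overline{a})$ is automatically $\leq_{{\cal D}_r}$-closed (if $\phi \in {\rm pp}^D(\overline{a})$ and $\phi \leq_{{\cal D}_r} \psi$, then $\overline{a} \in \phi(D) \subseteq \psi(D)$), and so $\langle {\rm pp}^K(\overline{a}) \rangle_{{\cal D}_r} \subseteq {\rm pp}^D(\overline{a})$. Equality follows.

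\textbf{Main obstacle.} There is really no hard step: all the substantive work has been done in \ref{kerinDr}, and the present corollary is essentially the translation of ``$K$ is definably closed'' into the language of pp-types via \ref{ppdcl}. The supported-on-$(-\infty, r-\eta)$ hypothesis is not used in the argument proper; it serves only to mark the interesting case in which $K \notin {\cal D}_r$. The one point worth a moment's care is ensuring that we apply \ref{iminD} to $f : D \to D'$ (both in ${\cal D}_r$) rather than trying to invoke purity of the inclusion $K \hookrightarrow D$, which fails in general.
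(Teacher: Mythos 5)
Your proof is correct, and it is precisely the first route the paper indicates but does not spell out — the paper says ``This follows directly from \ref{kerinDr} and \ref{ppdcl}'' and you have filled in the missing step (invoking \ref{iminD} to get $D/K \cong {\rm im}(f) \in {\cal D}_r$ so that \ref{kerinDr} applies) and the bookkeeping with $\langle - \rangle_{{\cal D}_r}$. The paper then also records a second, shorter argument directly from \ref{localpp} (take $\phi'$ equivalent to $\phi$ near $r$ with a free realisation $(C,\overline{c})$ such that $C/\langle\overline{c}\rangle \in {\bf q}_r$; the induced map $C/\langle\overline{c}\rangle \to D'$ is $0$, so the free realisation maps into $K$, giving $K\models\phi'(\overline{a})$), which bypasses \ref{kerinDr} entirely; both are fine, and your observation that the support hypothesis on $K$ is not actually used is correct.
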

\begin{proof}  This follows directly from \ref{kerinDr} and \ref{ppdcl} (the latter is stated for finitely generated modules but the general case is an immediate consequence of that).  But the proof direct from \ref{localpp} is quick, so we also give this.

Take any tuple $\overline{a}$ from $K$ and suppose that $\phi$ is a pp formula such that $D \models \phi(\overline{a})$.  By \ref{localpp} there is a pp formula $\phi'$ equivalent to $\phi$ at (and near) $r$ and with a free realisation $(C,\overline{c})$ such that $C/\langle \overline{c} \rangle \in {\bf q}_r$.

Then we have a morphism $g:C \to D$ with $g\overline{c} = \overline{a}$ and hence an induced morphism $C/\langle \overline{c} \rangle \to D'$ which, since the slope of $C/\langle \overline{c} \rangle$ is greater than $r$, must be $0$.  Hence $gC \leq K$.  But then $K\models \phi'(\overline{a})$ and so, since $\phi$ is equivalent to $\phi'$ near $r$, $\phi$ is in the ${\cal D}_r$-closure of ${\rm pp}^K(\overline{a})$, as required.
\end{proof}

\begin{cor}\label{minppfg} \marginpar{minppfg} Suppose that $0 \to A \to D \to D'$ is an exact sequence with $D, D' \in {\cal D}_r$ and $A$ finite-dimensional, generated by the $n$-tuple $\overline{a}$.  Then ${\rm pp}^D(\overline{a})$ is generated, modulo ${\cal D}_r$, by any quantifier-free formula which generates the defining linear relations on $\overline{a}$.  In particular it is the minimal pp-type of any tuple $\overline{c}$ from a module in ${\cal D}_r$ with the same isomorphism type as $\overline{a}$ (that is, with $(\overline{c}R,\overline{c}) \simeq (\overline{a}R, \overline{a})$ as $n$-pointed modules).
\end{cor}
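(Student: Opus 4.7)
The proof will be a direct application of Proposition \ref{fgker}, so the only real work is verifying that its hypotheses hold in the present setting and then unpacking the ``in particular'' clause.

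First I would observe that, by exactness, $A = \ker(D \to D')$, so the induced map $D/A \to D'$ is a monomorphism identifying $D/A$ with the image of $D \to D'$ in $D'$. Since $D, D' \in {\cal D}_r$, Lemma \ref{iminD} gives that this image, hence $D/A$, lies in ${\cal D}_r$. Also, since $A$ is finite-dimensional and generated by $\overline{a}$, the pp-type ${\rm pp}^A(\overline{a})$ is generated by a quantifier-free pp formula $\theta_{\overline{a}}$ encoding the defining linear relations on $\overline{a}$ (i.e.~the formula witnessing the presentation of $A$ as a quotient of a free module).

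With these data in place, Proposition \ref{fgker} applies (in the $\Rightarrow$ direction) to $A \leq D$ and yields ${\rm pp}^D(\overline{a}) = \langle \theta_{\overline{a}} \rangle_{{\cal D}_r}$, which is the first assertion of the corollary.

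For the ``in particular'' statement, suppose $M \in {\cal D}_r$ contains a tuple $\overline{b}$ with the same isomorphism type as $\overline{a}$, meaning that $\overline{b}$ satisfies exactly the same quantifier-free pp formulas as $\overline{a}$. Then $\theta_{\overline{a}} \in {\rm pp}^M(\overline{b})$, and since pp-types are always closed under ${\cal D}_r$-consequence when the ambient module lies in ${\cal D}_r$, we get $\langle \theta_{\overline{a}} \rangle_{{\cal D}_r} \subseteq {\rm pp}^M(\overline{b})$. Combined with the first part, this says ${\rm pp}^D(\overline{a}) \subseteq {\rm pp}^M(\overline{b})$, which is precisely the minimality claim. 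There is no real obstacle here; the whole result is essentially a repackaging of \ref{fgker} once one notes that the image of $D \to D'$ supplies the required factor $D/A \in {\cal D}_r$.
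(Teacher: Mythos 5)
Your proof is correct and is essentially the argument the paper intends: the corollary is stated without its own proof block precisely because it is a direct application of the forward direction of Proposition \ref{fgker} once one observes (via Lemma \ref{iminD}) that $D/A$, being the image of $D \to D'$, lies in ${\cal D}_r$. Your handling of the ``in particular'' clause is also the standard unpacking, using that for $M \in {\cal D}_r$ the pp-type of any tuple is closed under $\leq_{{\cal D}_r}$-consequence.
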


That follows by \ref{iminD} and \ref{fgker}.

We look at the following case more closely.  Note that by \ref{tiltapprox}, every $A\in {\rm mod}\mbox{-}R$ supported on $(-\infty,r)$ has a ${\cal D}_r$-preenvelope which is a monomorphism.

\begin{prop} Suppose that $A\in {\rm mod}\mbox{-}R$ is supported on $(-\infty,r)$ and let $A \to D_A$ be a ${\cal D}_r$-atomic ${\cal D}_r$-preenvelope.  Then $D_A/A \in {\cal D}_r$ and $D_A/A$ is ${\cal D}_r$-atomic.  If $D_A$ is strictly ${\cal D}_r$-atomic, so is $D_A/A$.
\end{prop}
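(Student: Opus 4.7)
The plan is as follows. For $D_A/A \in {\cal D}_r$: since $A$ is finitely presented, a generating tuple $\overline{a}$ has ${\rm pp}^A(\overline{a}) = \langle \theta_{\overline{a}}\rangle$ for a quantifier-free $\theta_{\overline{a}}$ encoding the defining relations, so by \ref{Denvpp} we get ${\rm pp}^{D_A}(f\overline{a}) = \langle \theta_{\overline{a}} \rangle_{{\cal D}_r}$. Identifying $A$ with its image $f(A) \leq D_A$, \ref{fgker} then directly yields $D_A/A \in {\cal D}_r$.

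For ${\cal D}_r$-atomicity of $D_A/A$, take any tuple $\overline{c}'$ in $D_A/A$, lift to $\overline{c}$ in $D_A$, and use ${\cal D}_r$-atomicity of $D_A$ to choose a pp formula $\chi(\overline{x},\overline{y})$ which ${\cal D}_r$-generates ${\rm pp}^{D_A}(\overline{c},\overline{a})$. Set $\psi_0(\overline{x}) := \chi(\overline{x},\overline{0})$. Then $\psi_0(\overline{c}')$ holds in $D_A/A$, because $\chi(\overline{c},\overline{a})$ holds in $D_A$ and $\overline{a}$ projects to $\overline{0}$. To see that $\psi_0$ ${\cal D}_r$-generates ${\rm pp}^{D_A/A}(\overline{c}')$, take any $\psi(\overline{x}) = \exists\,\overline{w}\,\bigwedge_j(\sum_i x_ir_{ij}+\sum_k w_ks_{kj}=0)$ in that pp-type; lifting witnesses from $D_A/A$ to $D_A$ shows that each $\sum_i c_ir_{ij}+\sum_k w_ks_{kj}$ lies in $\overline{a}R$, which gives the pp formula
\[ \tilde\psi(\overline{x},\overline{y}) := \exists\,\overline{w},\overline{z}_1,\ldots,\overline{z}_m\,\bigwedge_j\Bigl(\textstyle\sum_i x_ir_{ij}+\sum_k w_ks_{kj}=\sum_l y_l(z_j)_l\Bigr) \]
with $D_A \models \tilde\psi(\overline{c},\overline{a})$. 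Hence $\chi \leq_{{\cal D}_r}\tilde\psi$, and specialising $\overline{y}=\overline{0}$ collapses $\tilde\psi$ back to $\psi$, so $\psi_0 \leq_{{\cal D}_r} \psi$.

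For the strict case, assuming $D_A$ is strictly ${\cal D}_r$-atomic, take $\overline{m} \in \psi_0(D)$ with $D \in {\cal D}_r$; then $(\overline{m},\overline{0}) \in \chi(D)$, and strict ${\cal D}_r$-atomicity applied to the pair $\chi$, $(\overline{c},\overline{a})$ in $D_A$ produces $g:D_A \to D$ with $g(\overline{c},\overline{a}) = (\overline{m},\overline{0})$. Since $g\overline{a}=\overline{0}$, $g$ annihilates $A=\overline{a}R$ and descends to $\bar g:D_A/A \to D$ with $\bar g\overline{c}' = \overline{m}$, verifying strict ${\cal D}_r$-atomicity for the witnessed generator $\psi_0$ of ${\rm pp}^{D_A/A}(\overline{c}')$.

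The main obstacle is constructing $\tilde\psi$ in the second step and verifying that the substitution $\overline{y}=\overline{0}$ recovers $\psi$ exactly: this is the step that translates ``equality modulo $A$'' in the quotient into a genuine pp condition in $D_A$ parametrised by auxiliary variables which are then specialised. Once that translation is in place, the three conclusions drop out of \ref{Denvpp}, \ref{fgker}, the (strict) ${\cal D}_r$-atomicity of $D_A$, and the universal property of the quotient $D_A \to D_A/A$.
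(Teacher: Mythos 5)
Your proposal follows the same route as the paper in all three parts: by \ref{Denvpp} and \ref{fgker} for membership of $D_A/A$ in ${\cal D}_r$, and for (strict) ${\cal D}_r$-atomicity the same strategy of lifting a tuple to $D_A$, taking a ${\cal D}_r$-generator $\chi(\overline{x},\overline{y})$ of the pp-type of the lifted tuple together with a generating tuple for $A$, and specializing the $A$-variables to $\overline{0}$. The first and third parts are correct.

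The gap is in the formula $\tilde\psi$. You write
\[
\tilde\psi(\overline{x},\overline{y}) := \exists\,\overline{w},\overline{z}_1,\ldots,\overline{z}_m\,\bigwedge_j\Bigl(\textstyle\sum_i x_ir_{ij}+\sum_k w_ks_{kj}=\sum_l y_l(z_j)_l\Bigr),
\]
with the $\overline{z}_j$ in the existential prefix and their components $(z_j)_l$ appearing as ``coefficients'' multiplying $y_l$.  That is not a pp formula in the language of $R$-modules: every existentially quantified variable ranges over module elements, and the only multiplications allowed are by \emph{fixed} scalars from $R$, so a term $y_l(z_j)_l$ (a product of two module-valued expressions) is ill-formed.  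You are trying to say that $\sum_i x_ir_{ij}+\sum_k w_ks_{kj}$ lies in $\overline{y}R$, but ``lies in $\overline{y}R$'' cannot be expressed by quantifying over coefficients.

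The fix is to use the specific witnesses you already have.  Having lifted the witnesses of $\psi$ to $D_A$, for each $j$ you obtain a \emph{particular} element $a_j' := \sum_i c_ir_{ij}+\sum_k w_ks_{kj}$ of $A$, which has a fixed expression $a_j' = \sum_l a_l\,r_{jl}$ with $r_{jl}\in R$.  Hardcoding those scalars,
\[
\tilde\psi(\overline{x},\overline{y}) := \exists\,\overline{w}\,\bigwedge_j\Bigl(\textstyle\sum_i x_ir_{ij}+\sum_k w_ks_{kj}=\sum_l y_l\,r_{jl}\Bigr)
\]
is a genuine pp formula, $D_A\models\tilde\psi(\overline{c},\overline{a})$, $\chi\le_{{\cal D}_r}\tilde\psi$, and $\tilde\psi(\overline{x},\overline{0})$ is exactly $\psi(\overline{x})$.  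This is what the paper's notation $x_j'$ (``the linear combination of the variables $\overline{x}$ that corresponds to $a_j'$ written as a specific linear combination of the entries of $\overline{a}$'') encodes.  With this correction your argument goes through and coincides with the paper's.
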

\begin{proof}  The fact that $D_A/A \in {\cal D}_r$ is by \ref{Denvpp} and \ref{fgker}.  Let $\overline{b}$ be from $D_A$ and choose $\phi(\overline{x}, \overline{y})$ which ${\cal D}_r$-generates ${\rm pp}^{D_A}(\overline{a}, \overline{b})$ where $\overline{a}$ is a chosen finite generating tuple for $A$.  We claim that $\phi(\overline{0}, \overline{y})$ ${\cal D}_r$-generates ${\rm pp}^{D_A/A}(\pi\overline{b})$, where $\pi:D_A \to D_A/A$ is the quotient map.

Certainly that formula is in ${\rm pp}^{D_A/A}(\pi \overline{b})$, so suppose that $D_A/A \models \psi(\pi \overline{b})$, say $\psi$ is $\exists \overline{z} \, \bigwedge_j \, \sum y_ir_{ij} + \sum _k z_ks_{kj} =0$, so $D_A \models \bigwedge_j \, \sum b_ir_{ij} + \sum c_k s_{kj} = a_j'$ for some $c_k\in D_A$ and $a_j' \in A$.  Then $\exists \overline{z} \, \bigwedge_j \, \sum_i y_ir_{ij} + \sum_k z_ks_{kj} = x_j'$ is a consequence (modulo ${\cal D}_r$) of $\phi(\overline{x}, \overline{y})$ where $x_j'$ is being used for the linear combination of the variables $\overline{x}$ that corresponds to $a_j'$ written as a specific linear combination of the entries of $\overline{a}$.  That is, $\phi(\overline{x}, \overline{y}) \leq_{{\cal D}_r} \exists \overline{z} \, \bigwedge_j \, \sum_i y_ir_{ij} + \sum_k z_ks_{kj} = x_j'$, so $\phi(\overline{0}, \overline{y}) \leq_{{\cal D}_r} \exists \overline{z} \, \bigwedge_j \, \sum_i y_ir_{ij} + \sum_k z_ks_{kj} = 0$, that is $\phi(\overline{0}, \overline{y}) \leq_{{\cal D}_r} \psi(\overline{y})$, as claimed.

So every pp-type realised in $D_A/A$ is finitely generated modulo the theory of ${\cal D}_r$; that is, $D_A/A$ is ${\cal D}_r$-atomic.   Suppose that $D_A$ is strictly ${\cal D}_r$-atomic and, continuing the notation as above, take a finite tuple $\pi \overline{b}$ from $D_A/A$ and a ${\cal D}_r$-generator $\phi(\overline{0}, \overline{y})$ for the pp-type of $\pi \overline{b}$ in $D_A/A$ constructed as above.  Suppose that $D\in {\cal D}_r$ and $D \models \phi(\overline{0}, \overline{d})$.  Then, by choice of $\phi$ and since $D_A$ is strictly ${\cal D}_r$-atomic, there is a morphism $D_A \to D$ with $\overline{a} \mapsto \overline{0}$ and $\overline{b} \mapsto \overline{d}$, so this morphism factors through $\pi$, giving a morphism $D_A/A \to D$ with $\pi \overline{b} \mapsto \overline{d}$, as required.
\end{proof}

Here's a little more about morphisms of ${\cal D}_r$ with kernel $R$.

\begin{prop}  Let $T$ be a tilting module in ${\cal D}_r$ and ${C}$ a cotilting module in ${\cal D}_r$.  If $0 \to R  \xrightarrow{i}  T_0 \to T_1 \to 0$ is a ${\cal D}_r$-preenveloping sequence with $T_0, T_1 \in {\rm Add}(T)$, then there is an exact sequence $0 \to R \to H(T_0) \to H(T_0)/R \to 0$ with $H(T_0), H(T_0)/R$ in ${\rm Prod}(C)$, and the induced inclusion of $H(T_1)$ in $H(T_0)/R$ is split.
\end{prop}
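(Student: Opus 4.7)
The plan is to unpack the four-step sketch preceding the statement into a self-contained argument in three stages.

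First, I would pin down the relevant pp-type at $a = i(1) \in T_0$. By Lemma~\ref{tiltapprox}, the map $i$ is a ${\cal D}_r$-preenvelope of $R$, so Lemma~\ref{Denvpp} gives that ${\rm pp}^{T_0}(a) = \langle x = x \rangle_{{\cal D}_r}$ is the generic pp-type $p_0$ of ${\cal D}_r$. Form the pure-injective hull $i' : T_0 \hookrightarrow H(T_0)$ inside ${\cal D}_r$ (which exists since definable categories are closed under pure-injective hulls). Purity preserves pp-types, so $i'a \in H(T_0)$ also has pp-type $p_0$.

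Second, I would place $H(T_0)$ inside a ${\rm Prod}(C)$-copresentation of $R$. Fix an exact sequence $0 \to R \xrightarrow{j} C_0 \to C_1 \to 0$ with $C_0, C_1 \in {\rm Prod}(C)$ from \cite[proof of 6.4]{AngKusAlg}; in particular both outer terms are pure-injective. The submodule $jR$ is the kernel of a morphism in ${\cal D}_r$, so Theorem~\ref{kerinDr} shows it is definably closed in $C_0$, and Corollary~\ref{ppdcl} then gives ${\rm pp}^{C_0}(j(1)) = p_0$. The preenveloping property of $i$ factors $R \to C_0$ as $R \to T_0 \to C_0$; combined with the purity of $i'$ and the pure-injectivity of $C_0$, the middle factor extends to a morphism $H(T_0) \to C_0$. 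Standard pure-injective hull theory --- invoked via the matching pp-types of $i'a$ and $j(1)$ --- forces this extension to exhibit $H(T_0)$ as a direct summand of $C_0$, so $H(T_0) \in {\rm Prod}(C)$. Writing $C_0 = H(T_0) \oplus X$ with $X$ pure-injective, and noting that $jR \subseteq H(T_0)$, the quotient $C_1 \cong C_0/jR$ decomposes as $(H(T_0)/R) \oplus X$. Consequently $H(T_0)/R$ is a direct summand of the pure-injective $C_1$, hence lies in ${\rm Prod}(C)$, and one has the required exact sequence $0 \to R \to H(T_0) \to H(T_0)/R \to 0$.

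Third, I would produce the split inclusion of $H(T_1)$ using the commutative diagram $\xymatrix{0 \ar[r] & R \ar[r] \ar@{=}[d] & T_0 \ar[r] \ar[d]_{i'} & T_1 \ar[r] \ar[d]^f & 0 \\ 0 \ar[r] & R \ar[r] & H(T_0) \ar[r] & H(T_0)/R \ar[r] & 0}$ with identity on $R$, $i'$ in the middle vertical, and induced map $f$ on cokernels. The right-hand square is a pushout; since $i'$ is a pure monomorphism and pushouts of pure monomorphisms along arbitrary morphisms are pure monomorphisms, $f$ is itself a pure embedding. Thus $T_1$ sits as a pure submodule of the pure-injective $H(T_0)/R$, so its pure-injective hull $H(T_1)$ embeds purely into $H(T_0)/R$, and this embedding splits because $H(T_1)$ is itself pure-injective.

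The main obstacle is the direct-summand claim in the second step --- that the extension $H(T_0) \to C_0$ we produce is actually a split monomorphism. This is where one must combine the preenveloping factorisation, the pure-injectivity of $C_0$, and the matching pp-types at $a$ and $j(1)$ in order to invoke the fine structure of pure-injective hulls; once this is in hand, the rest is essentially bookkeeping.
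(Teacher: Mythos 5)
Your argument tracks the paper's own proof step for step: identify the generic pp-type $p_0$ at $i(1)$ and at $i'a\in H(T_0)$, invoke \ref{kerinDr} and \ref{ppdcl} on the ${\rm Prod}(C)$-copresentation $0\to R\xrightarrow{j}C_0\to C_1\to 0$ to see that $j(1)$ also realises $p_0$, deduce that $H(T_0)$ (and hence $H(T_0)/R$) lies in ${\rm Prod}(C)$, and close with the same pushout diagram together with the fact that pushouts of pure monomorphisms are pure. The step you flag as the main obstacle — passing from matching pp-types at $i'a$ and $j(1)$ to $H(T_0)$ itself (rather than merely a shared summand $H(p_0)$) being a direct summand of $C_0$ — is exactly where the paper is equally terse, so your write-up is a faithful reproduction of the paper's approach, compression and all.
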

\begin{proof}  Consider a pure-injective hull $i': T_0 \to H(T_0) \in {\cal D}_r$.  Set $a=i(1)$; then ${\rm pp}^{H(T_0)}(i'a) = {\rm pp}^{T_0}(a) = \langle x=x\rangle_{{\cal D}_r}$ is the  generic pp-type (that is, the smallest pp-type, being generated by ``$x=x$") in ${\cal D}_r$; write this as $p_0$.  We also have (\cite[proof of 6.4]{AngKusAlg}) an exact sequence $0 \to R \xrightarrow{j} C_0 \to C_1 \to 0$ with $C_0, C_1 \in {\rm Prod}(C)$ where $C$ is a cotilting module for ${\cal D}_r$ as at the beginning of Section \ref{secirrat}; in particular these are pure-injective (every cosilting module is pure-injective).  Since $j$ is the kernel of a morphism in ${\cal D}_r$, we have by \ref{kerinDr} that $jR$ is definably closed in $C_0$.  Then \ref{ppdcl} implies that ${\rm pp}^{C_0}(j(1)) = p_0$ and so $H(T_0)$ is a direct summand of $C_0$.  Therefore we can replace this exact sequence with $0 \to R \to H(T_0) \to H(T_0)/R \to 0$, deducing in particular, that $H(T_0)/R$, being a direct summand of $C_1$, is pure-injective.
Next consider the diagram.
$\xymatrix{
0 \ar[r] & R \ar[r] \ar@{=}[d] & T_0 \ar[r] \ar[d]_{i'} & T_1 \ar[r] \ar[d]^f & 0 \\
0 \ar[r] & R \ar[r] & H(T_0) \ar[r] & H(T_0)/R \ar[r] & 0
}$
Since $i'$ is a pure embedding, so is its pushout $f$, so $H(T_1)$ is a direct summand of $H(T_0)/R$.
\end{proof}

The modules $H(T_0)$ and $H(T_1)$, although in ${\rm Prod}(C)$, certainly are not cotilting modules since, according to next result, they have no neg-isolated direct summands.

\begin{prop}  If $T$ is a strictly ${\cal D}_r$-atomic module then the pure-injective hull $H(T)$ of $T$ has no neg-isolated direct summand.
\end{prop}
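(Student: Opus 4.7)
The strategy is proof by contradiction. Suppose $N$ is a non-zero neg-isolated indecomposable pure-injective direct summand of $H(T)$. Since $H(T)\in{\cal D}_r$ (definable categories being closed under pure-injective hulls) and $N$ is a direct summand, $N\in{\cal D}_r$ as well, and by \ref{nodefsub} the theory in which the neg-isolation is taken is that of ${\cal D}_r$. Let $\phi$ be a pp formula neg-isolating some realized pp-type $p$ in $N$, and write $\pi\colon H(T)\to N$ for the projection, with $K=\ker(\pi)$ the complementary summand.

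The first step is to observe that $\pi|_T\colon T\to N$ is non-zero: if it were zero, $T$ would embed purely into $K$, which is pure-injective, contradicting the minimality of $H(T)$ as a pure-injective extension of $T$. So I may choose $t\in T$ with $c:=\pi(t)\ne 0$. By strict ${\cal D}_r$-atomicity of $T$, there is a pp formula $\psi$ with ${\rm pp}^T(t)=\langle\psi\rangle_{{\cal D}_r}$; since $\pi$ preserves pp-formulas and $T$ is pure in $H(T)$, it follows that $N\models\psi(c)$ and ${\rm pp}^N(c)\supseteq\langle\psi\rangle_{{\cal D}_r}$.

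The second step is to use strict atomicity to describe the pp-types available in ${\cal D}_r$ above $\langle\psi\rangle_{{\cal D}_r}$ and compare them with the rigid ``maximal'' configuration coming from neg-isolation. Explicitly, for every $D\in{\cal D}_r$ and $d\in D$ satisfying $\psi$, strict atomicity furnishes a morphism $T\to D$ sending $t$ to $d$, so that ${\rm pp}^D(d)\supseteq\langle\psi\rangle_{{\cal D}_r}$. The plan is then to exhibit a module $D'\in{\cal D}_r$ and an element $d'\in D'$ realizing a pp-type strictly above $p$ that omits $\phi$, contradicting the maximality of $p$.

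The main obstacle is this last step: explicitly constructing such a pair $(D',d')$ requires careful control over the lattice of pp-types realised in ${\cal D}_r$ above $\langle\psi\rangle_{{\cal D}_r}$. I would attempt this either by combining the irreducibility of ${\cal D}_r$ from \ref{nodefsub} with the local pure-projectivity of $T$ from \ref{DateqDpp} to transport the desired configuration between modules of ${\cal D}_r$, or alternatively by dualising: applying \ref{dualnegisol} to $T^\ast$ (in ${\cal D}_{r^\ast}$) should give that the dual already carries all the neg-isolated summand data, and then showing that $H(T)$ having a neg-isolated summand $N$ would produce an incompatible second source of neg-isolation on the $T$-side, contradicting the asymmetry between $H(T)$ and $T^\ast$.
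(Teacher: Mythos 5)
Your proof has a genuine gap, and in fact is missing both of the two key ideas of the argument.

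First, the pp-type you track is not the right one. You pick some $t \in T$ with nonzero image $c = \pi(t)$ in $N$ and observe that ${\rm pp}^N(c) \supseteq \langle\psi\rangle_{{\cal D}_r} = {\rm pp}^T(t)$. But this inclusion may well be strict, and in any case you never locate an element of $T$ whose pp-type is the neg-isolated type $p$ with $N = H(p)$. That realisation of $p$ inside $T$ is the crux, and it is obtained in the paper via functor-category machinery: passing to the localised functor category via $M \mapsto M\otimes -$, the summand $N = H(p)$ of $H(T)$ contributes a simple subfunctor of $H(T)\otimes -$; since $T\otimes -$ is essential in its injective hull $H(T)\otimes -$, that simple sits inside $T\otimes -$, and \cite[12.2.4]{PreNBK} converts a nonzero morphism from the corresponding finitely presented functor to $T\otimes -$ into an element $a\in T$ with ${\rm pp}^T(a) = p$. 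Nothing in your sketch plays the role of this essentiality step, and there is no obvious elementary substitute for it; choosing an arbitrary $t$ with $\pi(t)\neq 0$ gives no control over ${\rm pp}^N(\pi(t))$.

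Second, you never identify the source of the final contradiction. Once $p$ is realised in $T$, strict ${\cal D}_r$-atomicity of $T$ forces $p$ to be ${\cal D}_r$-finitely generated, say $p = \langle\phi\rangle_{{\cal D}_r}$, while neg-isolation says $p$ is maximal among pp-types omitting $\psi$; together these produce a minimal pair in the pp-lattice modulo ${\cal D}_r$. The contradiction is with density of that lattice, which is a specific fact about irrational slope over tubular algebras (\cite[6.1, 7.3]{HarPre}); without density the statement is simply false (e.g.\ $R$ is strictly atomic in ${\rm Mod}\mbox{-}R$, yet $H(R)$ typically has neg-isolated summands). Neither of your two proposed workarounds --- combining \ref{nodefsub} with \ref{DateqDpp}, or dualising via \ref{dualnegisol} --- supplies the realisation-in-$T$ step or invokes density, so neither closes the gap.
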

\begin{proof}  We need rather more model theory/functor category theory for this.  We use the embedding $M \mapsto M\otimes -$ of ${\rm Mod}\mbox{-}R$ into the functor category $(R\mbox{-}{\rm mod}, {\bf Ab})$ followed by Gabriel localisation at the torsion theory which is generated by the finitely presented functors which are $0$ on the dual definable category ${\cal D}_r^{\rm d}$.  In $(R\mbox{-}{\rm mod}, {\bf Ab})$, $H(T)\otimes -$ is the injective hull of $T\otimes -$ and is torsionfree for that torsion theory.  Working in the localised category (see \cite[\S 12.5, 12.5.6 especially]{PreNBK}), if $H(T)$ has a neg-isolated direct summand, say the hull $H(p)$ of a pp-type $p$ neg-isolated by a pp formula $\psi$, then $H(T)\otimes -$ has a simple subobject, namely, the localisation of the functor $F_{D\psi/Dp}$, and hence so does its essential subobject $T\otimes -$.  Therefore, $T$ realises a neg-isolated type - for we have a nonzero morphism $F_{D\psi/Dp} \to T\otimes -$ and so, by \cite[12.2.4]{PreNBK}, there is\footnote{We obtain the element $a$ as follows.  Since $F_{D\psi}$ is a subfunctor of the forgetful functor $(_RR,-) \simeq (R\otimes_R-)$, we have an inclusion $F_{D\psi/Dp} \to (R\otimes -)/F_{Dp}$ with finitely presented cokernel $(R\otimes -)/F_{D\psi}$.  Since $(T\otimes -)$ is absolutely pure, the morphism $F_{D\psi/Dp} \to T\otimes -$ therefore extends to a morphism $(R\otimes -)/F_{Dp} \to (T\otimes -)$.  By composition with $(R\otimes -) \to (R\otimes -)/F_{Dp}$ we obtain a morphism $(R\otimes -) \to (T\otimes -)$, which must be induced by a morphism $R_R \to T$, that is, by an element $a\in T$. The pp-type of $a$ in $T$ is $p$ by \cite[12.2.5]{PreNBK} and the fact that $F_{Dp}$ is, because $(R\otimes -)/F_{Dp}$ is uniform by \cite[12.2.3]{PreNBK} with a simple subobject, the kernel of $(a\otimes -):(R\otimes -) \to (T\otimes -)$.} $a\in T$ with ${\rm pp}^T(a) =p$.  But every pp-type realised in $T$ is finitely generated, so $p = \langle \phi \rangle_{{\cal D}_r}$ for some pp formula $\phi$.  But then $\phi /\psi$ is a minimal pair in the ordering $\leq_{{\cal D}_r}$, meaning there is no point in the ordering strictly between them.  But that contradicts \cite[6.1, 7.3]{HarPre}, so we have a contradiction as required. (In terms of the functor category, the localisation of the object $F_{D\psi}/F_{D\phi}$ is equal to the localisation of $F_{D\psi}/F_{Dp}$, which shows that this simple object is finitely presented in the localised functor category, contradicting the result in \cite{HarPre}.)
\end{proof}

\noindent {\bf Question:}  Are there any nonzero objects in ${\cal D}_r$ which are finitely presented in ${\cal D}_r$?

\vspace{4pt}

We can say this much:

\begin{prop}  If $D\in {\cal D}_r$ is finitely presented in ${\cal D}_r$, then $D$ is ${\cal D}_r$-atomic.  Indeed, every finite tuple of $D$ can be extended to a finite tuple whose pp-type is ${\cal D}_r$-generated by its quantifier-free type (cf.~\cite[3.13]{RotHab}).
\end{prop}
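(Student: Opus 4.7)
The plan is to use Makkai's theorem (\ref{Mak}) together with the standard retract characterisation of finitely presented objects in any cocomplete category. By \ref{Mak}(a), ${\cal D}_r$ is $\varinjlim$-generated by the strictly ${\cal D}_r$-atomic modules $D_A$ arising from the Makkai construction, so we may write $D = \varinjlim_i D_{A_i}$ as a filtered colimit in ${\cal D}_r$. Since $D$ is finitely presented in ${\cal D}_r$, the identity $1_D$ factors through one of the colimit maps; that is, there are morphisms $s:D \to D_{A_{i_0}}$ and $p:D_{A_{i_0}} \to D$ with $ps = 1_D$. Thus $D$ is a direct summand, hence a pure submodule, of $D_{A_{i_0}}$, and by \ref{dsstrDat} it is strictly ${\cal D}_r$-atomic, and in particular ${\cal D}_r$-atomic. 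This settles the first assertion.

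For the ``indeed'' part, I would exploit the concrete structure of $D_{A_{i_0}}$ recorded in the corollary following \ref{5.6}: $D_{A_{i_0}}$ is a union of finitely generated submodules $B'_n$ whose generating tuples $\overline{b}'_n$ satisfy ${\rm pp}^{D_{A_{i_0}}}(\overline{b}'_n) = \langle \theta'_n\rangle_{{\cal D}_r}$ for a quantifier-free formula $\theta'_n$ generating ${\rm pp}^{B'_n}(\overline{b}'_n)$. Given a finite tuple $\overline{d}$ from $D$, its lift $s\overline{d}$ lies in some such $B'_n$; let $\overline{b}$ be a generating tuple of this $B'_n$ containing $s\overline{d}$ as a sub-tuple. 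The natural candidate for the extension of $\overline{d}$ in $D$ is then $p\overline{b}$, which contains $ps\overline{d} = \overline{d}$.

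The core of the argument, and the step I expect to be most technical, is to verify that ${\rm pp}^D(p\overline{b}) = \langle {\rm qftp}^D(p\overline{b})\rangle_{{\cal D}_r}$, equivalently (via \ref{fgker}) that $D/p(B'_n) \in {\cal D}_r$. Writing the splitting $D_{A_{i_0}} = sD \oplus D'$ with $D' = \ker p$ gives $D/p(B'_n) \cong D_{A_{i_0}}/(B'_n + D')$ and the short exact sequence
\[ 0 \to D'/(B'_n \cap D') \to D_{A_{i_0}}/B'_n \to D_{A_{i_0}}/(B'_n + D') \to 0, \]
whose middle term already lies in ${\cal D}_r$ by choice of $B'_n$. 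By \ref{kerinDr} it then suffices to show that the image of $D'$ in $D_{A_{i_0}}/B'_n$ is definably closed, which should be approachable via \ref{ppdcl}. If this is not immediate for the initial choice of $B'_n$, the remedy is to enlarge $\overline{b}$ by passing to $B'_m$ for some $m \geq n$ in the directed system, chosen so that the $D'$-component of the resulting tuple becomes a ``good'' generating tuple for the analogous Makkai structure on the summand $D'$ (which, being a direct summand of $D_{A_{i_0}}$, is itself a retract of some Makkai module and so also finitely presented in ${\cal D}_r$). The delicate bookkeeping between the two summands is what I anticipate to be the real work.
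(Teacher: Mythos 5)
Your proof of the first assertion is correct and takes a genuinely different route from the paper's, and in fact delivers a stronger conclusion. You write $D$ as a filtered colimit of the Makkai modules $D_{A_i}$ and use the retract characterisation of finitely presented objects to realise $D$ as a direct summand --- hence pure submodule --- of some $D_{A_{i_0}}$; then \ref{dsstrDat} gives that $D$ is \emph{strictly} ${\cal D}_r$-atomic. The paper instead builds a directed system out of the finitely generated submodules $A\leq D$ and their functorially chosen ${\cal D}_r$-preenvelopes $D_A$ (Lack--Tendas), uses the finitely presented hypothesis only to factor the induced map $D\to\varinjlim D_A$ through a single $D_A$, and then compares pp-types along a chain of maps. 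Your route is cleaner for the first assertion and does not need the Lack--Tendas functoriality; what it does not obviously give, however, is the ``indeed'' clause, which is really the content of this proposition.

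For the ``indeed'' part your sketch has a gap that I think is genuine rather than mere bookkeeping. The candidate tuple $p\overline{b}$ need not have the right property: the retraction $p$ can strictly \emph{enlarge} the quantifier-free type (linearly independent entries of $\overline{b}$ may collapse under $p$, and $p(B'_n)$ is a proper quotient of $B'_n$), so the fact that ${\rm pp}^{D_{A_{i_0}}}(\overline{b})$ is ${\cal D}_r$-generated by the q.f.\ type of $\overline{b}$ does not transfer to ${\rm pp}^D(p\overline{b})$ and the q.f.\ type of $p\overline{b}$. Equivalently, in your exact sequence
\[
0\to D'/(B'_n\cap D')\to D_{A_{i_0}}/B'_n\to D_{A_{i_0}}/(B'_n+D')\to 0,
\]
the requirement that $(D'+B'_n)/B'_n$ be definably closed in $D_{A_{i_0}}/B'_n$ asks for a compatibility between the complementary summand $D'$ (which arises from an arbitrary splitting) and the Makkai submodule $B'_n$ (which is fixed by the construction and knows nothing of that splitting). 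I do not see how to arrange this by enlarging $\overline{b}$ within $D_{A_{i_0}}$, since the $B'_n$ are never contained in $sD$ and the idempotent $e=sp$ does not carry a $B'_n$ to another $B'_m$. The paper avoids the whole issue by staying inside $D$: after choosing a suitable finitely generated $A\leq D$, it takes \emph{any} finitely generated $B=\langle\overline{b}\rangle$ with $A\leq B\leq D$ and exhibits the sandwich ${\rm pp}^B(\overline{b})\leq{\rm pp}^D(\overline{b})\leq{\rm pp}^{D_B}(\iota_B\overline{b})=\langle{\rm pp}^B(\overline{b})\rangle_{{\cal D}_r}$ (the last step being \ref{Denvpp} for the preenvelope $\iota_B:B\to D_B$), so that ${\rm pp}^D(\overline{b})$ is forced to equal the ${\cal D}_r$-closure of its own q.f.\ type. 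That there are no direct-sum complements to track is precisely what makes the paper's argument go through, whereas yours would need a new idea to control $D'$.
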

\begin{proof}  Write $D =\varinjlim \, A$ as the direct limit of its finitely generated submodules.  For each finitely generated submodule $A$ of $D$, choose a ${\cal D}_r$-atomic, ${\cal D}_r$-precover $A \to D_A$ of $A$.  By \cite[3.3]{LackTend} these may be chosen in a functorial way, so that, corresponding to an inclusion $A \leq B$ of finitely generated submodules of $D$, we have a morphism $g_{AB}:D_A \to D_B$ and these morphisms give a directed system, with $\varinjlim D_A = D_1$ say.  Since $D = \varinjlim A$ there is an induced morphism $f:D \to D_1$ (indeed, this also is functorial, as stated in \cite[3.3]{LackTend}).

Since $D$ is finitely presented in ${\cal D}_r$, there is $A \leq D$ finitely generated and a morphism $h:D \to D_A$ such that $f=g_{A\infty}h$ where $g_{A\infty}:D_A \to D_1$ is the limit map.  Let $\overline{b}$ be any tuple from $D$ and, without loss of generality, assume that it contains a generating tuple for $A$.  Set $B$ to be the submodule of $D$ generated by $\overline{b}$.  Then we have ${\rm pp}^B(\overline{b}) \leq {\rm pp}^D(\overline{b}) \leq {\rm pp}^{D_A}(\overline{hb}) \leq {\rm pp}^{D_B}(\overline{g_{AB}b})$.  The last pp-type is ${\cal D}_r$-finitely generated, being equivalent, modulo the theory of ${\cal D}_r$, to the first pp-type (\ref{Denvpp}) and hence is generated by any pp formula which generates the first pp-type.  Hence ${\rm pp}^D(\overline{b})$ is generated, modulo ${\cal D}_r$, by (any quantifier-free pp formula which generates) ${\rm pp}^B(\overline{b})$. 
\end{proof}

\section{Background from Model Theory}\label{secmodth} \marginpar{secmodth}

This consists of brief explanations; for more information and detail there are various references, including the comprehensive \cite{PreBk} and \cite{PreNBK} and the introductions to many other works, such as \cite{RotHab}, \cite{RotML2}.

\paragraph{Pp formulas}  
A {\bf pp formula} $\phi$ is (one which is equivalent to) an existentially quantified system of $R$-linear equations, that is, has the form
$$\exists \overline{y} \, \bigwedge_{j=1}^m \, \sum_{i=1}^n x_ir_{ij} +\sum_{k=1}^t y_ks_{kj} =0.$$
Here the $r_{ij}$ and $s_{kj}$ are elements of $R$ (precisely, function symbols standing for multiplication by those elements) and $\bigwedge$ is used for repeated conjunction $\wedge$, where the conjunction symbol $\wedge$ means ``and"; so this is a system of $m$ $R$-linear equations.  The variables $\overline{x} = (x_1, \dots, x_n)$ are the {\bf free} variables of $\phi$ (they are `free' to be substituted with values from some module) and the $y_k$ are the existentially quantified variables.  We may display the free variables of $\phi$, writing $\phi(\overline{x})$ or $\phi(x_1,\dots, x_n)$.

A {\bf quantifier-free} pp formula is a formula (equivalent to one) with no existential quantifiers.  For instance $\bigwedge_{j=1}^m \, \sum_{i=1}^n x_ir_{ij} +\sum_{k=1}^t y_ks_{kj} =0$ is a quantifier-free formula, with free variables the $x_i$ and the $y_k$.

\paragraph{Solution sets of pp formulas}  
If $\phi =\phi(x_1,\dots, x_n)$ is the pp formula above then, in any module $M$, we define its solution set:  
$$\phi(M) = \{ (a_1,\dots, a_n)\in M^n: \exists b_1,\dots, b_t\in M \text{ such that } \sum_{i=1}^n a_ir_{ij} +\sum_{k=1}^t b_ks_{kj} =0, \, \forall j\}.$$
This is a projection, to the first $n$ coordinates in $M^{n+t}$, of the solution set of the quantifier-free formula $\bigwedge_{j=1}^m \, \sum_{i=1}^n x_ir_{ij} +\sum_{k=1}^t y_ks_{kj} =0$.  Since the solution set to the latter is a subgroup of $M^{n+t}$, its projection $\phi(M)$ is a subgroup of $M^n$.  (In fact, it is easy to see that both are ${\rm End}(M)$-submodules under the diagonal action of that ring on powers of $M$.)  We say that $\phi(M)$ is a subgroup of $M^n$ pp-definable in $M$ or, more briefly though, if $n>1$, less accurately, a {\bf pp-definable subgroup} of $M$.

If $\overline{a} \in \phi(M)$ then we write $M \models \phi(\overline{a})$ - this is the more usual notation in model theory and is read as ``$\overline{a}$ satisfies $\phi$ in $M$".

Since pp formulas define subgroups, we have that $M\models \phi(\overline{a})$ and $M \models \phi(\overline{b})$ together imply $M \models \phi(\overline{a} - \overline{b})$.

\paragraph{The (pre-)ordering on, and equivalence of, pp formulas}
We write $\psi \leq \phi$ if, for every module $M$, $\psi(M) \leq \phi(M)$.  We will make this comparison only when $\psi$ and $\phi$ have the same free variables (so that $\psi(M)$ and $\phi(M)$ may be compared as subsets of the same power of $M$).  This is a preordering, and {\bf equivalence} of formulas means equivalence with respect to this.  More generally, we say that $\phi$ is {\bf equivalent} to $\psi$ {\bf in} $M$ if $\phi(M) =\psi(M)$, that is if $M\models \phi(\overline{a})$ iff $M \models \psi(\overline{a})$.  So two pp formulas are equivalent iff this holds for every $M$ (in fact, to test the ordering and equivalence it is enough to check just on finitely presented modules \cite[1.2.23]{PreNBK}).  So in practice we use ``$=$" not to mean that the formulas are identical (as strings of symbols) but rather to mean that they have the same solution sets.

\paragraph{Lattices of pp formulas}
For each $n$ the resulting ordered set of (equivalence classes of) pp formulas in (a specified list of) $n$ free variables is a modular lattice, written ${\rm pp}^n_R$, the point being that each of the intersection and sum of $\phi(M), \psi(M) \leq M^n$ is the solution set of a pp formula; those formulas are respectively written $\phi \wedge \psi$ and $\phi + \psi$ and are entirely independent of $M$.  Explicitly, $\phi \wedge \psi$ is the usual formal conjunction of formulas and $\phi + \psi$ is $\exists \overline{x}_1, \overline{x}_2\, (\overline{x} = \overline{x}_1 + \overline{x}_2 \, \wedge \, \phi(\overline{x}_1) \, \wedge \, \psi(\overline{x}_2))$.

So, for every module $M$, we have the evaluation map ${\rm pp}^n_R \to {\rm pp}^n(M)$ where the latter is the set, indeed modular lattice, of subgroups of $M^n$ pp-definable in $M$.  The kernel of this lattice homomorphism consists of the pairs $(\phi, \psi)$ such that $\phi(M) = \psi(M)$:  we say that such a pair is {\bf closed on} $M$.  Otherwise the pair is {\bf open on} $M$.  A {\bf pp-pair} is a pair of pp formulas which are comparable, $\phi \geq \psi$, in the ordering on ${\rm pp}^n_R$.

We write $\psi \leq_M \phi$ and $\psi =_M \phi$ for the (pre)ordering and equivalence of pp formulas when evaluated on $M$.

\paragraph{Definable subcategories}
Given any set $\Phi$ of pp-pairs, the corresponding {\bf definable subcategory} of ${\rm Mod}\mbox{-}R$ is the full subcategory on
$$\{ M \in {\rm Mod}\mbox{-}R: \phi(M) =\psi(M) \,\, \forall (\phi,\psi) \in \Phi\}.$$
Thus a definable subcategory is one with membership determined by closure of a certain set of pp-pairs.

The definable subcategories of ${\rm Mod}\mbox{-}R$ are characterised algebraically as being those closed under direct products, directed colimits and pure submodules (\cite[3.4.7]{PreNBK}).  They also are closed under pure epimorphisms and pure-injective hulls (\cite[3.4.8]{PreNBK}).  A {\bf definable category} is one which is equivalent to a definable subcategory of some module category ${\rm Mod}\mbox{-}R$ (we allow $R$ to be a ring with many objects, that is a skeletally small preadditive category).

If $M$ is a module then we denote by $\langle M \rangle$ the definable subcategory {\bf generated} by $M$ - the smallest definable subcategory (of the ambient module category) containing $M$:
$$ \langle M \rangle = \{ N\in {\rm Mod}\mbox{-}R: \phi(M) = \psi(M) \implies \phi(N) =\psi(N) \,\, \forall  \phi, \psi \text{ pp }\}.$$
That is, $\langle M \rangle$ consists of the class of modules $N$ such that every pp-pair closed on $M$ is closed on $N$.  Similar notation is used for the definable subcategory generated by a class of modules.
Every definable subcategory is generated by some (by no means unique) $M$.

\paragraph{The functor category of a definable category}
If ${\cal D}$ is a definable category, then the functors from ${\cal D}$ to the category, ${\bf Ab}$, of abelian groups which commute with direct products and directed colimits are precisely those given by pp-pairs:  those of the form $D \mapsto \phi(D)/\psi(D)$, for $\phi \geq \psi$ a pp-pair, see \cite[18.1.19]{PreNBK} (and the main result of \cite{Makk} specialises to something close to this).  This category is also equivalent to the localisation of the functor category $({\rm mod}\mbox{-}R, {\bf Ab})^{\rm fp}$ - the finitely presented functors on finitely presented modules - by the Serre subcategory consisting of those finitely presented functors which are $0$ on ${\cal D}$.  Indeed, the finitely presented functors being just the pp-pairs, these are exactly all the pp-pairs which are closed on, hence which together define, ${\cal D}$.  See \cite[12.3.19, 12.3.20]{PreNBK}.

\paragraph{Pp formulas relative to a definable subcategory}
If ${\cal D}$ is a definable subcategory, then we write $\psi \leq_{\cal D} \phi$ if $\psi(M) \leq \phi(M)$ for every $M \in {\cal D}$, and $\psi =_{\cal D} \phi$ if $\psi(M) = \phi(M)$ for every $M \in {\cal D}$.  If ${\cal D} = \langle M \rangle$, then these are the same as $\leq_M$ and $=_M$.

The relation $ =_{\cal D}$ of ${\cal D}$-equivalence between pp formulas in a given set of, say $n$, free variables is a congruence on the lattice ${\rm pp}^n_R$ of pp formulas in those $n$ free variables and so there is induced a surjective lattice homomorphism to the lattice ${\rm pp}^n_{\cal D}$ of equivalent-on-${\cal D}$ classes of pp formulas (which can be identified with ${\rm pp}^n(M)$ if $\langle M \rangle = {\cal D}$).

\paragraph{Elementary duality of pp formulas}
If $\phi(\overline{x})$ is a pp formula for right $R$-modules then there is an ({\bf elementary}) {\bf dual} pp formula $D\phi(\overline{x})$ for left $R$-modules (in the same number of free variables\footnote{Free variables are just place-holders so it doesn't matter whether or not we use the same free variables in the dual formula.})  For instance the dual of an annihilation formula $xr=0$ is the corresponding divisibility formula $r|x$, that is $\exists z (rz=x)$, and (up to equivalence of formulas) {\it vice versa}.  This is a well-defined duality between the lattices ${\rm pp}^n_R$ and ${\rm pp}^n_{R^{\rm op}}$:
$D(\phi \wedge \psi) = D\phi + D\psi$; $D(\phi + \psi) = D\phi \wedge D\psi$; $D^2\phi =\phi$.
See \cite[\S 1.3]{PreNBK}.

\paragraph{Dual definable categories}
If ${\cal D}$ is a definable subcategory of ${\rm Mod}\mbox{-}R$, determined by closure of some set $\Phi$ of pp-pairs, then the ({\bf elementary}) {\bf dual definable category} ${\cal D}^{\rm d}$ is the subcategory of $R\mbox{-}{\rm Mod}$ defined by the set of dual pairs - the collection of $(D\phi, D\psi)$ such that $(\psi, \phi) \in \Phi$.

In particular, $\psi \leq _{\cal D} \phi$ iff $D\phi \leq_{{\cal D}^{\rm d}} D\psi$.

We have $({\cal D}^{\rm d})^{\rm d} ={\cal D}$.  Also $M\in {\cal D}$ iff $M^\ast \in {\cal D}^{\rm d}$ where $^\ast$ denotes any duality of the sort seen earlier in this paper, for instance $(-)^\ast = {\rm Hom}_{\mathbb Z}(-,{\mathbb Q}/{\mathbb Z})$ or, $(-)^\ast = {\rm Hom}_K(-,K)$ if $R$ is a $K$-algebra.  See \cite[\S 3.4.2]{PreNBK}.

\paragraph{Pp-types}
The {\bf pp-type of} an element $a$ in a module $M$ is the set of all pp formulas that it satisfies in $M$; similarly for $n$-tuples:  ${\rm pp}^M(\overline{a}) = \{ \phi(\overline{x}): M \models \phi(\overline{a}) \}$.  This is nothing more than the set of all projected (see ``Solution sets of pp formulas" above) $R$-linear relations satisfied by $\overline{a}$.  We say that $\overline{a}$ is a {\bf realisation} of that pp-type {\bf in} $M$.  Every set $p$ of pp formulas which is a filter, that is, upwards-closed (if $\phi \leq \psi$ and $\phi\in p$ then $\psi \in p$) and closed under intersection/conjunction ($\phi, \psi \in p$ implies $\phi \wedge \psi \in p$) occurs in this way, so we refer to such a set as a {\bf pp-type}.

A pp-type $p$ is {\bf finitely generated} if there is a pp formula $\phi \in p$ such that $p=\{ \psi: \phi \leq \psi\}$; we write $p =\langle \phi\rangle$.  If $A$ is finitely presented and $\overline{a}$ is from $A$, then ${\rm pp}^A(\overline{a})$ is finitely generated, \cite[1.2.6]{PreNBK}.

When we work in a definable subcategory ${\cal D}$, then pp-types realised in modules in ${\cal D}$ will be closed under the equivalence relation $=_{\cal D}$ and upwards closed under $\leq_{\cal D}$.  We say that a pp-type $p$ realised in ${\cal D}$ is ${\cal D}$-{\bf finitely generated} if it is generated among pp-types realised in ${\cal D}$ by a single formula, say $p = \{ \psi: \phi \leq_{\cal D} \psi\}$ and we then write $p= \langle \phi \rangle_{\cal D}$.

\paragraph{Pp-type of a module}
It makes sense to refer to the pp-type of any subset, in particular any submodule, $A$ of a module $M$:  we write $A$ as an infinite tuple $\overline{a} = (a_0, a_1, \dots, a_\lambda, \dots)_{\lambda < |A|}$ and then take the union of the pp-types in $M$ of finite sub-tuples of $\overline{a}$.  Since the ordering of $A$ is arbitrary we just write ${\rm pp}^M(A)$ for this.  If $A$ is finitely generated, by $\overline{c}$ say, then ${\rm pp}^M(A)$ is completely determined by ${\rm pp}^M(\overline{c})$ since every element of $A$ is an algebraic $R$-linear combination of elements of $\overline{c}$. 

Note that ${\rm pp}^A(A)$ is contained in ${\rm pp}^M(A)$ for any $M \supseteq A$.  So, if $A$ is a finitely presented module, generated by $\overline{c}$ with defining relations generated by the quantifier-free formula $\theta$, then ${\rm pp}^A(\overline{c})$ is minimal among pp-types (with free variables matching $\overline{c}$) which contain the formula $\theta$.

\paragraph{Morphisms}
Morphisms preserve pp formulas: if $f:M \to N$ and $M\models \phi(\overline{a})$, then $N \models \phi(f\overline{a})$ \cite[1.1.7]{PreNBK}.  Thus morphisms are non-decreasing on pp-types:  ${\rm pp}^M(\overline{a}) \subseteq {\rm pp}^N(f\overline{a})$ if $f$ is as above.  And $f$ is a {\bf pure monomorphism} iff ${\rm pp}^M(\overline{a}) = {\rm pp}^N(f\overline{a})$ for every $\overline{a}$ from $M$.

\paragraph{Free realisations of pp formulas}
A {\bf free realisation} of a pp formula $\phi$ in $n$ free variables is a finitely presented module $C$ and an $n$-tuple $\overline{c}$ from $C$ such that ${\rm pp}^C(\overline{c}) = \langle \phi\rangle$.  It follows that, if $M$ is any module and $\overline{a} \in \phi(M)$, then there is a morphism $f:C \to M$ such that $f\overline{c} = \overline{a}$ \cite[1.2.7]{PreNBK}.

Every pp formula has a free realisation \cite[1.2.14]{PreNBK}.

\paragraph{Irreducible pp-types}
A nonzero pp-type $p$ is {\bf irreducible} (or {\bf indecomposable}) if it is realised in an indecomposable pure-injective module.  That module, the {\bf hull}, denoted $H(p)$, of $p$, is unique to isomorphism (over any realisation of $p$).  Ziegler's Criterion, see \cite[4.3.49]{PreNBK}, is an often checkable criterion for this:  it says that $p$ is irreducible iff, for every $\psi_1, \psi_2$ not in $p$, there is $\phi \in p$ such that $(\psi_1 \wedge \phi) + (\psi_2 \wedge \phi) \notin p$.

\paragraph{Neg-isolated pp-types}
A pp-type $p$ is said to be {\bf neg-isolated} by a pp formula $\phi$ if it is maximal among pp-types not containing $\phi$.  Any such pp-type is \cite[\S 5.3.5, 4.3.52]{PreNBK} irreducible, so is realised in an indecomposable pure-injective.  An indecomposable pure-injective $N$ is said to be {\bf neg-isolated} if it is the hull of a neg-isolated pp-type and, in that case, every non-zero pp-type realised in it is neg-isolated, see \cite[5.3.46]{PreNBK}.  In fact neg-isolation of $N$ is equivalent, see \cite[5.3.47]{PreNBK}, to the functor $N\otimes_R -$ being the injective hull of a simple object in the functor category $(R\mbox{-}{\rm mod}, {\bf Ab})$.

All these notions relativise to any definable category ${\cal D}$, see \cite[\S 5.3.5]{PreNBK}.  In particular $N$ is {\bf neg-isolated} in ${\cal D}$, or with respect to ${\cal D}$, if it is the hull of some pp-type $p$ such that there is a pp formula $\psi$ such that $p$ is maximal, with respect to not containing $\psi$, among pp-types realised in modules in ${\cal D}$.  Also, the relevant functor categories are those associated to ${\cal D}$ and ${\cal D}^{\rm d}$ (those functor categories are Gabriel localisations of the functor categories associated to the whole module category \cite[\S 12.3]{PreNBK}).

\paragraph{Elementary cogenerators}
An {\bf elementary cogenerator} for a definable category ${\cal D}$ is a pure-injective $N\in {\cal D}$ such that every module in ${\cal D}$ is a pure submodule of a direct product of copies of $N$.  Every definable category has an elementary cogenerator \cite[5.3.52]{PreNBK} and, for $N$ to be an elementary cogenerator, it is necessary and sufficient that every neg-isolated pure-injective in ${\cal D}$ be a direct summand of $N$ \cite[5.3.50]{PreNBK}.  It is equivalent that (the localisation of) $N\otimes -$ be an injective cogenerator of the relevant functor category \cite[12.5.7]{PreNBK}, namely the Gabriel localisation of $(R\mbox{-}{\rm mod}, {\bf Ab})$ at the hereditary, finite-type torsion theory corresponding to the dual definable category ${\cal D}^{\rm d}$ (see \cite[\S 12.3]{PreNBK}).

(We have to involve modules on the other side because we are using the functor which makes a right module $M$ into a functor $M\otimes_R -$ on left modules, see \cite[\S 12.1]{PreNBK}.)

\paragraph{Rings of definable scalars}
Suppose that ${\cal D}$ is a definable subcategory.  If $\rho(x,y)$ is a pp formula with two free variables such that, on every $D\in {\cal D}$ the solution set $\rho(D)$ in $D$ is the graph of a function, necessarily additive, on $D$, then we say that $\rho$ is a {\bf definable scalar} on ${\cal D}$ (more precisely, $\rho$ defines a scalar on every module in ${\cal D}$).  Of course, if two pp formulas are equivalent on ${\cal D}$, then they define the same scalar on ${\cal D}$.  The set of maps so defined is the {\bf ring of definable scalars} for ${\cal D}$,  denoted $R_{\cal D}$.  For instance, multiplication by any $r\in R$ is such, being given by the formula $x-yr=0$, and this gives a (canonical) ring homomorphism $R \to R_{\cal D}$.  

It is easy to see that sums and compositions of pp-definable maps on ${\cal D}$ are pp-definable, so $R_{\cal D}$ is indeed a ring.  If $M$ is any module, then any pp-definable map on $M$ extends to a pp-definable map (given by the same pp formula) on the definable category ${\cal D} = \langle M \rangle$ generated by $M$ and we also write $R_M$ for $R_{\cal D}$.

For more details see \cite[Chpt.~6, \S 12.8]{PreNBK}.

Every universal localisation $R \to R'$ occurs this way (as the ring of definable scalars for ${\rm Mod}\mbox{-}R'$ canonically embedded as a definable subcategory of ${\rm Mod}\mbox{-}R$) and any ring of definable scalars $R \to R'$ can be seen as a localisation of $R$ at the level of functor categories (\cite[12.8.2]{PreNBK} makes this precise).

\paragraph{Definable closures}
If $M$ is an $R$-module, $A\subseteq M$ and $b\in M$, we say that $b$ is {\bf definable over} $A$ {\bf in} $M$ if there is $\overline{a}$ from $A$ and a formula $\chi(\overline{x},y)$ in the language of $R$-modules such that $M\models \chi(\overline{a}, b)$ and $b$ is the only solution to $\chi(\overline{a}, y)$ in $M$.  In the context, ${\cal D}_r$, that we consider, it is the case that every module $D \in {\cal D}_r$ is elementarily equivalent to $D\oplus D$ (by pp-elimination of quantifiers, see \cite[A.1.1, A.1.2]{PreNBK}) and from that it follows by \cite[2.1]{BurPre1} that a defining formula $\chi$ may be taken to be pp.  If $A \subseteq M$ is any subset, then the {\bf definable closure}, ${\rm dcl}^M(A)$, of $A$ in $M$ is the set of all elements in $M$ which are definable over $A$.  This will be a submodule of $M$ since an $R$-linear combination of elements clearly is definable over those elements.

\end{document}